\newtheorem {theorem} {Theorem}
\newtheorem {proposition} [theorem]{Proposition}
\newtheorem {corollary} [theorem]{Corollary}
\newtheorem {lemma}  [theorem]{Lemma}
\newtheorem {remark} {Remark}
\newtheorem*{TheoremA}{Theorem A}
\newtheorem*{TheoremB}{Theorem B}
\newtheorem*{TheoremC}{Theorem C}
\newtheorem*{TheoremD}{Theorem D}
\newcommand{\X}{{\mathcal{\bf} X}}
\newcommand{\R}{{\mathbb R}}
\newtheorem{Defi}[theorem]{Definition}
\title{\large\bf On the Patterson-Sullivan measure for geodesic flows on rank $1$ manifolds without focal points}
\author{Fei Liu \thanks{College of Mathematics and System Science, Shandong University of Science and Technology, Qingdao, 266590, P.R. China.
e-mail:  liufei@math.pku.edu.cn.}
\and Fang Wang
\thanks{School of Mathematical Sciences, Capital Normal University, Beijing, 100048, China; and Beijing Center for Mathematics and Information
Interdisciplinary Sciences (BCMIIS), Beijing 100048, P.R. China. e-mail: fangwang@cnu.edu.cn.}
\and Weisheng Wu
\thanks{Department of Applied Mathematics, College of Science, China Agricultural University, Beijing, 100083, P.R. China. e-mail: wuweisheng@cau.edu.cn}}
\date{\today}
\begin{document}
\maketitle

\begin{abstract}
In this article, we consider the geodesic flow on a compact rank $1$ Riemannian manifold $M$ without focal points, whose universal cover is denoted by $X$. On the ideal boundary $X(\infty)$ of $X$, we show the existence and uniqueness of the Busemann density, which is realized via the Patterson-Sullivan measure. Based on the the Patterson-Sullivan measure, we show that the geodesic flow on $M$ has a unique invariant measure of maximal entropy. We also obtain the asymptotic growth rate of the volume of geodesic spheres in $X$ and the growth rate of the number of closed geodesics on $M$. These results generalize the work of Margulis and Knieper in the case of negative and nonpositive curvature respectively.
\\

\noindent {\bf Keywords and phrases:}  Geodesic flows, no focal points, Patterson-Sullivan measure, measure of maximal entropy.\\

\end{abstract}


\section{\bf Introduction and main results}\label{intro}

\setcounter{section}{1}
\setcounter{equation}{0}\setcounter{theorem}{0}

In this article, we study the geodesic flow on a connected closed (compact and having no boundary) rank $1$ manifold without focal points. We  consider the invariant measure of maximal entropy of the geodesic flow. The uniqueness of such a measure for the geodesic flow on compact manifolds with nonpositive curvature is an important topic in the theory of geodesic flows, which was previously studied by R.~Bowen (cf.~\cite{Bo2}) and G.~Margulis (cf.~\cite{Ma0}) in the case of negative curvature, and then proved by G.~Knieper for the geodesic flow on a rank $1$ manifold with nonpositive curvature (cf.~\cite{Kn1}). By following the ideas in \cite{Kn1}, we extend Knieper's result to rank $1$ manifolds without focal points and prove the existence and uniqueness of the invariant measure of maximal entropy. We will also discuss some related topics, including the distribution of periodic geodesics on a rank $1$ manifold without focal points and the asymptotic growth of the volume of geodesic spheres in the universal cover of the manifold.

Suppose that $(M,g)$ is a $C^{\infty}$ connected compact $n$-dimensional Riemannian manifold,
where $g$ is a Riemannian metric. For each $p \in M$ and $v \in T_{p}M$,
let $\gamma_{v}$ be the unique geodesic satisfying the initial conditions $\gamma_v(0)=p$ and $\gamma'_v(0)=v$. The geodesic flow $\phi=(\phi^{t})_{t\in\mathbb{R}}$ (generated by the Riemannian metric $g$) on the unit tangent bundle $SM$ is defined as:
\[
\phi^{t}: SM \rightarrow SM, \qquad (p,v) \mapsto
(\gamma_{v}(t),\gamma'_{v}(t)),\ \ \ \ \forall\ t\in \R .
\]
Without special indication, all geodesics we are considering in this paper are the geodesics with unit speed.

A $\phi$-invariant probability measure $\mu$ is called the measure of maximal entropy if $h_{\mu}(\phi)\geq h_{\nu}(\phi)$ for any $\phi$-invariant probability measure $\nu$. By the variational principle, $h_{\mu}(\phi)=h_{\text{top}}(g)$, where $h_{\text{top}}(g)$ denotes the topological entropy of the geodesic flow on $SM$. In 1970, Margulis constructed a measure of maximal entropy for the geodesic flow on a compact Riemannian manifold of variable negative curvature, in his dissertation at Moscow State University. His results were first published in \cite{Ma0} as a short announcement. Later in \cite{Ma1} he gave more details, and the whole proofs were published eventually in \cite{Ma2}. Using different methods, Bowen also constructed a maximal entropy measure in \cite{Bo1} in 1972, for hyperbolic flows, of which the geodesic flow on a compact manifold of negative curvature is a primary example. Later in \cite{Bo2} he proved that in this case (negative curvature), the maximal entropy measure of the geodesic flow is unique. Therefore, the maximal entropy measures for the geodesic flows constructed by Bowen and Margulis are eventually the same one. Thus we call this measure the \emph{Bowen-Margulis measure}.

In 1984, A.~Katok  conjectured that the geodesic flow on a compact rank $1$ manifold of nonpositive curvature admits a unique invariant maximal entropy measure (cf. \cite{BuKa}). Here, a rank $1$ geodesic is a geodesic which does not have a parallel perpendicular Jacobi field, and a rank $1$ manifold is a Riemannian manifold which admits a rank $1$ geodesic. More precisely, we present the definition of rank in the following:

\begin{Defi}
For each $v \in SM$, we define \emph{\text{rank}($v$)} to be the dimension of the vector space of parallel Jacobi fields along the geodesic $\gamma_{v}$, and \emph{\text{rank}($M$):=$\min\{$\text{rank}$(v) \mid v \in SM\}$}. For a geodesic $\gamma$ we define \emph{\text{rank}($\gamma$)}=\emph{\text{rank}($\gamma'(t)$)}, $\forall\ t\in \mathbb{R}$.
\end{Defi}

If $M$ is a rank $1$ manifold, the unit tangent bundle $SM$ splits into two invariant subsets: the regular set $\text{reg}:= \{v\in SM \mid \text{rank}(v)=1\}$, and its complement $\text{sing}:= SM \setminus \text{reg}$, which is called the singular set.
It is still not known if the singular set has zero Liouville volume. This is a wide open problem since 1980's, and the positive answer will imply the ergodicity of the geodesic flow on rank $1$ manifolds of nonpositive curvature or without focal points w.r.t. Liouville measure (cf. \cite{Wu1,WLW}).
We also remark that if $M$ has rank greater than $1$ and of nonpositive curvature, the celebrated higher rank rigidity theorem, which was established independently by W. Ballmann and K. Burns-R. Spatzier in \cite{Ba} and \cite{BS2} based on the work in W. Ballmann-M. Brin-P. Eberlein (cf.~\cite{BBE}) and W. Ballmann-M. Brin-R. Spatzier (cf.~\cite{BBS}), asserts that the universal cover $X$ of $M$ is a flat Euclidean space, a symmetric space of noncompact type, a space of rank 1, or a product of the above types. The higher rank rigidity theorem is also extended to the manifolds without focal points (cf. \cite{Wat}) and a class of Finsler manifolds (cf. \cite{Wu2}).

Katok's conjecture was eventually proved by Knieper in \cite{Kn1}. In his proof, instead of considering the measures supported on closed geodesics as in \cite{Bo1}, Knieper studied the so-called Busemann density defined by using the Poincar\'e series,
and used it to construct the measure of maximal entropy (we call it \emph{Knieper measure}).
Knieper's innovative work points out a new way to study the measure of maximal entropy for geodesic flows.
An immediate question is that does this result hold for more general situations?
The manifolds without focal points / conjugate points are usually considered to be the natural extension of
the conception of manifolds with nonpositive curvature.

\begin{Defi}
Let $\gamma$ be a geodesic on $(M,g)$. A pair of distinct points $p=\gamma(t_{1})$ and $q=\gamma(t_{2})$ are called \emph{focal} if there is a Jacobi field $J$ along $\gamma$ such that $J(t_{1})=0$, $J'(t_{1})\neq 0$ and $\frac{d}{dt}\| J(t)\|^{2}\mid_{t=t_{2}}=0$; $p=\gamma(t_{1})$ and $q=\gamma(t_{2})$ are called \emph{conjugate} if there is a non-identically-zero Jacobi field $J$ along $\gamma$ such that $J(t_{1})=0=J(t_{2})$.

A compact Riemannian manifold $(M,g)$ is called a manifold \emph{without focal points / without conjugate points} if there is no focal points / conjugate points on any geodesic in $(M,g)$.
\end{Defi}

From the definitions, it is easy to see that if a manifold has no focal points then it has no conjugate points. It is well known that all manifolds with nonpositive curvature have no focal points. In this sense, manifolds without focal / conjugate points can be regarded as a generalization of the manifolds with nonpositive curvature. This generalization is non-trivial,
since it is easy to construct a compact manifold without focal points whose curvature is not everywhere nonpositive (cf.~\cite{Gu}).

In this paper we generalize Knieper's work and prove the existence and uniqueness of the measure of maximal entropy
on a compact rank $1$ manifolds without focal points, by employing Knieper's idea to the more general situation.
This is the following theorem:

\begin{TheoremA}\label{ThmA}
Suppose $(M,g)$ is a smooth compact rank $1$ Riemannian manifold without focal points, then
\begin{enumerate}
\item the geodesic flow on $(M,g)$ has a unique measure of maximal entropy;
\item $h_{\text{top}}(\phi|_{\text{sing}})< h_{\text{top}}(g)$, where $h_{\text{top}}(g)$ is the topological entropy of the geodesic flow on $SM$,
and $h_{\text{top}}(\phi|_{\text{sing}})$ is the topological entropy of the geodesic flow restricted on $\text{sing}$.
\end{enumerate}
\end{TheoremA}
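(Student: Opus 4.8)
The plan is to follow Knieper's strategy from \cite{Kn1} and adapt each step to the setting of no focal points, where one no longer has convexity of the metric but only the weaker properties coming from the flatness of Jacobi fields (monotonicity of $\|J(t)\|$, the divergence property, continuity of stable/unstable horospheres, etc.). The construction proceeds in several stages. First, on the ideal boundary $X(\infty)$ of the universal cover $X$ one constructs, via the Patterson--Sullivan procedure applied to the Poincar\'e series $\sum_{\gamma\in\Gamma}e^{-s\,d(x,\gamma y)}$, a family of Busemann densities $\{\mu_x\}_{x\in X}$ of dimension $h=h_{\mathrm{top}}(g)$, i.e. measures on $X(\infty)$ that transform by the cocycle $e^{-h\,b_\xi(x,y)}$ under $\Gamma$ and under change of base point; the first task is to prove existence and, crucially, \emph{uniqueness} of such a density. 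Uniqueness is where no focal points needs real work: one must show that the action of $\Gamma$ on $X(\infty)$ is sufficiently irreducible (every Borel set invariant under the geodesic flow is null or conull on the regular set), using the density of the set of endpoints of rank $1$ axes and the mixing/transitivity of the flow on $\mathrm{reg}$.

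Next one uses the Busemann density to build a measure on $SM$. Following Knieper, one defines a measure $\bar\mu$ on $X(\infty)\times X(\infty)$ (minus the diagonal) by $d\bar\mu(\xi,\eta)=e^{h\,\beta_{\xi,\eta}(x)}\,d\mu_x(\xi)\,d\mu_x(\eta)$ for a suitable Gromov-product-type function $\beta_{\xi,\eta}$, checks $\Gamma$-invariance, and then pushes it, together with Lebesgue measure in the flow direction, to a $\phi$-invariant, $\Gamma$-invariant measure on $SX$ which descends to a finite measure $\mu$ (the Knieper measure) on $SM$. Here one must handle the fact that in the no-focal-points case two distinct boundary points need not be joined by a \emph{unique} geodesic; one works instead with the (closed, convex-hull-type) set of connecting geodesics, shows this set has uniformly bounded geometry on the regular part, and defines $\mu$ on $\mathrm{reg}$ first, verifying it is supported there and is finite after normalization.

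The heart of the argument, and the main obstacle, is the entropy estimate: one must show $h_\mu(\phi)=h_{\mathrm{top}}(g)$ and that \emph{any} measure of maximal entropy equals $\mu$. For the lower bound $h_\mu(\phi)\ge h$ one establishes a \emph{Margulis-type asymptotic} $\mathrm{vol}\,S(x,R)\sim c\,e^{hR}$ for geodesic spheres (or the analogous statement for the Busemann density of balls), which rests on a shadowing/local-product-structure argument for the density $\mu_x$: the $\mu_x$-measure of a ``shadow'' of a ball $B(y,\rho)$ seen from $x$ is comparable to $e^{-h\,d(x,y)}$ up to multiplicative constants depending only on $\rho$. Without focal points the key technical input replacing convexity is the \emph{uniform continuity and divergence of geodesics} and the flat-strip/Jacobi-field estimates valid under no focal points; these let one prove the needed doubling and quasi-invariance properties of $\mu_x$. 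For the upper bound and uniqueness one shows that the conditional measures of any maximal-entropy measure along unstable horospheres must transform like the Busemann density of dimension $h$ (an entropy-rigidity argument in the spirit of the Ledrappier--Young / Margulis functional, or Knieper's direct argument using the fact that $\mu$ gives full measure to $\mathrm{reg}$ and that on $\mathrm{reg}$ the stable/unstable foliations are well behaved), forcing it to coincide with $\mu$; combined with the already-established uniqueness of the Busemann density this yields (1).

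Finally, for part (2), one argues that $h_{\mathrm{top}}(\phi|_{\mathrm{sing}})<h$: the singular set $\mathrm{sing}$ is a closed flow-invariant set carrying all geodesics with a parallel perpendicular Jacobi field, and one shows its Busemann density has strictly smaller dimension. Concretely, by the variational principle it suffices to show every $\phi$-invariant measure supported on $\mathrm{sing}$ has entropy $<h$; since $\mu$ is the \emph{unique} measure of maximal entropy and $\mu(\mathrm{sing})=0$ (established during the construction, as $\mu$ is supported on $\mathrm{reg}$), no measure on $\mathrm{sing}$ can achieve entropy $h$, and a compactness argument on the space of invariant measures on the compact set $\mathrm{sing}$ upgrades this to a strict inequality $h_{\mathrm{top}}(\phi|_{\mathrm{sing}})=\sup\{h_\nu(\phi):\nu(\mathrm{sing})=1\}<h$. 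The principal difficulty throughout is that the standard nonpositive-curvature toolkit (convexity of the distance function, the flat strip theorem in its usual form, uniqueness of connecting geodesics) must each be replaced by its no-focal-points analogue, and verifying that these weaker substitutes still suffice for the shadowing estimates and the conditional-measure analysis is the bulk of the work.
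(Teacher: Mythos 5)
Your overall plan follows the paper's route for part (1): Patterson--Sullivan construction of an $h$-dimensional Busemann density, the Knieper measure $\mu$ built from the Gromov product, and shadow estimates of the form $\mu_x(pr_\xi B(y,\rho))\asymp e^{-h\,d(x,y)}$ to force $h_\mu(\phi)=h$. Two points, however, deserve attention.

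First, on uniqueness of the measure of maximal entropy. Your primary proposed mechanism --- showing that the conditional measures of an arbitrary maximal-entropy measure along unstable horospheres transform like the Busemann density, and then invoking uniqueness of the Busemann density --- is not what the paper does, and it is far from clear it can be carried out here: on a rank $1$ manifold the stable/unstable ``foliations'' degenerate on $\text{sing}$ and there is no local product structure available to run a Ledrappier--Young type conditional-measure analysis. The paper instead decomposes an arbitrary invariant $\nu$ into a part absolutely continuous with respect to $\mu$ and a part singular to $\mu$, identifies the absolutely continuous part with $\mu$ using the \emph{ergodicity of $\mu$} (proved separately via recurrence of rank $1$ vectors and contraction along stable manifolds, Lemmas \ref{lem7}--\ref{lem8}), and then shows by a direct covering/counting argument (the sets $D(x,R',R)$, the covers $L^{2n}_i$ with $\mu(L^{2n}_i)\geq\beta e^{-2hn}$, and the exceptional sets $C_n$) that every measure singular to $\mu$ has entropy strictly below $h$. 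You mention Knieper's direct argument only parenthetically and never address ergodicity of $\mu$, which is an essential and nontrivial step in that argument.

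Second, and this is a genuine gap: for part (2) you claim that since every invariant measure supported on $\text{sing}$ has entropy $<h$, ``a compactness argument on the space of invariant measures'' upgrades this to $h_{\text{top}}(\phi|_{\text{sing}})<h$. Compactness alone does not do this: the supremum of a family of numbers each strictly less than $h$ can still equal $h$. What is needed is that the supremum is \emph{attained}, i.e.\ upper semi-continuity of $\nu\mapsto h_\nu(\phi)$ on the space of invariant measures. The paper obtains this from the entropy-expansiveness of geodesic flows on compact manifolds without focal points (which uses that such manifolds are of bounded asymptote), and only then runs the contradiction: a weak$^\ast$ limit $\omega$ of measures on $\text{sing}$ with entropies tending to $h_{\text{top}}(\phi|_{\text{sing}})$ would satisfy $h_\omega(\phi)=h$ while $\omega\neq\mu$. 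Without entropy-expansiveness (or some substitute guaranteeing upper semi-continuity), your argument for the entropy gap does not close.
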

\begin{remark}
In a recent preprint \cite{GR}, K.~Gelfert and R.~Ruggiero proved that the geodesic flow on a compact surface without focal points has a unique measure of maximal entropy. Their idea is to consider the time-preserving semi-conjugacy from the geodesic flow to a continuous expansive flow with a local product structure, which has a unique measure of maximal entropy.

The property $h_{\text{top}}(\phi|_{\text{sing}})< h_{\text{top}}(g)$ is usually called the ``entropy gap" for the geodesic flows.  It is obtained for the geodesic flows on compact rank $1$ manifolds of nonpositve curvature by Knieper in \cite{Kn1}. Recently, this is generalized to a ``pressure gap"
for the geodesic flows on compact rank $1$ manifolds of nonpositve curvature, by using a different method by K.~Burns-V.~Climenhaga-T.~Fisher-D.~J.~Thompson in \cite{BCTT}. Our result shows the entropy gap in the situation of no focal points.
\end{remark}

The key ingredient in the proof is the Patterson-Sullivan measure constructed from the Poincar\'{e} series.
Let $M=X/\Gamma$ be a compact rank $1$ manifold without focal points, where $X$ is the universal cover of $M$ and $\Gamma$ is a discrete subgroup of the isometry group $\text{Iso}(X)$. We will extend the Patterson-Sullivan construction to the rank $1$ manifolds without focal points, and show that the Patterson-Sullivan measure is essentially the unique Busemann density. We will discuss the notion of Busemann density and the Patterson-Sullivan construction in Subsection \ref{Busemann1}.

We note that in nonpositive curvature case, Knieper's method relies on the convexity properties of various distance functions. In no focal points case, convexity property are replaced by sort of the no maxima property. Nevertheless, some results such as flat strip lemma remain true.
In the course of the proof of Theorem A, we establish all the geometric properties needed in Knieper's method,
more precisely Propositions \ref{pro1}, \ref{pro2}, \ref{pro3}-\ref{pro6} and \ref{pro8}.
In Propositions \ref{pro1} and \ref{pro2},
we prove the continuity property of geodesics at infinity and describe the property of cones of simply connected
manifolds without focal points;
in Proposition \ref{pro3} we show that compact manifolds without focal points satisfy the duality condition,
a property that has many important applications, including the rank rigidity theory in nonpositive curvature(cf.~\cite{Ba1, Eb1});
in Propositions \ref{pro4}-\ref{pro6}, we establish the existence of the connecting geodesics between the neighborhoods of the endpoints of a rank $1$ geodesic,
which is crucial for the semi-local properties in the no focal case appearing in Subsection \ref{proj};
in Proposition \ref{pro8}, we prove that the fundamental group acts minimally on the ideal boundary,
which is extraordinarily important in our discussion on the Patterson-Sullivan measure (see Proposition \ref{pro12}).
These properties are important for the study of the dynamics of the geodesic flows on rank $1$ manifolds with no focal points, and hence of independent interests.
Recently in \cite{LZ} we used these properties to prove that the geodesic flows on compact rank $1$ manifolds without focal points
are topologically transitive.

\begin{TheoremB}\label{ThmB}
Let $M=X/\Gamma$ be a compact rank $1$ manifold without focal points, then up to a multiplicative constant, the Busemann density is unique, i.e., the Patterson-Sullivan measure is the unique Busemann density.
\end{TheoremB}

The Busemann density was first constructed by Patterson in \cite{Pat}. Uniqueness of the Busemann density was established by Sullivan (cf.~\cite{Su}) for compact manifolds of negative curvature and by Knieper (cf.~\cite{Kn0})
for compact rank $1$ manifolds of nonpositive curvature. To our knowledge, it is the first time the Busemann density is considered for compact rank $1$ manifold without focal points, in Theorem B. We also show that the critical exponent of the Poincar\'{e} series of the co-compact group $\Gamma\subset \text{Iso}(X)$ with $M=X\setminus \Gamma$ coincide with the topological entropy of the geodesic flow on $SM$.

Since the Poincar\'{e} series is of divergent type, the Patterson-Sullivan measure can be used to construct a finite measure on $SX$ which is invariant under the geodesic flow and $\Gamma$-action. The main work in this article is to show the invariant measure under the geodesic flow on $SM$ constructed in this way is the unique measure of maximal entropy, proving Theorem A.

Furthermore, the power of the Patterson-Sullivan measure is not only limited to this problem. It can also be used to investigate the asymptotic geometry of the rank 1 compact manifolds without focal points. Let $B(x,r)$ be a geodesic ball in $X$ about $x$ with radius $r>0$, and $S(x,r)=\partial B(x,r)$ be a geodesic sphere in $X$. Let $h=h_{\text{top}}(g)$ be the topological entropy of the geodesic flow, which will be shown to coincide with the critical exponent of the Poincar\'{e} series of the co-compact group $\Gamma\subset \text{Iso}(X)$ with $M=X\setminus \Gamma$, as mentioned above. The classical results of Manning (cf.~\cite{Man}) and Freire-Ma\~n\'e (cf.~\cite{FrMa}) show
$$h=\lim_{r\to \infty}\frac{\log \text{Vol}(B(x,r))}{r}.$$
In \cite{Ma0}, Margulis obtained a finer estimate in the case of negative curvature: there are constants $a>0$ and $r_0>0$ such that
$$\frac{1}{a} \leq \frac{\text{Vol}(S(x,r))}{e^{h \cdot r}} \leq a,~\forall~r>r_0.$$
It is Knieper who applied the Patterson-Sullivan measure to obtain a same result on the asymptotic growth of the volume of the geodesic spheres
for compact rank $1$ manifolds of nonpositive curvature (cf. \cite{Kn0}). His method can be extended to compact rank $1$ manifolds without focal points:

\begin{TheoremC}\label{ThmC}
Suppose $(M,g)$ is a compact rank $1$ manifold without focal points and $X$ is its universal cover. Let $x\in X$ be an arbitrary point and $S(x,r)$ be the sphere centered at $x$ with  radius $r$. Then there are constants $a>0$ and $r_0>0$ such that $$\frac{1}{a} \leq \frac{\text{Vol}(S(x,r))}{e^{h \cdot r}} \leq a,~\forall~r>r_0,$$ where $h=h_{\text{top}}(g)$ is the critical exponent of the Poincar\'{e} series of the group $\Gamma\subset Iso(X)$ with $M=X\setminus \Gamma$.
\end{TheoremC}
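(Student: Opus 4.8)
The plan is to follow Knieper's approach from the nonpositive curvature case, with convexity inputs replaced by the monotonicity (``no maxima'') phenomena available under no focal points. Write $b(x,r)=\mathrm{Vol}(B(x,r))$ and $s(x,r)=\mathrm{Vol}(S(x,r))$, so that $s(x,r)=\tfrac{d}{dr}b(x,r)$ and $s(x,r)=\int_{S_xX}J_x(r,v)\,d\theta(v)$, where $\theta$ is the round measure on $S_xX\subset T_xX$ and $J_x(r,v)$ is the Jacobian of $w\mapsto\exp_x(rw)$ at $rv$, expressed through the perpendicular matrix Jacobi field $A_v$ along $\gamma_v$ with $A_v(0)=0$, $A_v'(0)=\mathrm{Id}$, namely $J_x(r,v)=\det A_v(r)$. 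The one place where no focal points (as opposed to merely no conjugate points) is used for this theorem is the elementary remark that $r\mapsto J_x(r,v)$ is nondecreasing for every $v$: no focal points means $\langle J(t),J'(t)\rangle>0$ for $t>0$ for every Jacobi field with $J(0)=0$, $J'(0)\neq0$, which amounts to $A_v(t)^{T}A_v'(t)$ being positive definite; hence $U_v(t):=A_v'(t)A_v(t)^{-1}$ has positive definite symmetric part, and $\tfrac{d}{dt}\log J_x(t,v)=\mathrm{tr}\,U_v(t)>0$. Consequently $s(x,\cdot)$ is nondecreasing.

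The core is to show $c^{-1}e^{hr}\le b(x,r)\le c\,e^{hr}$ uniformly in $x$, and moreover $c^{-1}e^{hr}\le b(x,r+T)-b(x,r)\le c\,e^{hr}$ for a suitable fixed $T$. Here I would use the Patterson--Sullivan measure from Theorem B: the Busemann density $\{\mu_x\}_{x\in X}$ of dimension $h$, satisfying $\tfrac{d\mu_x}{d\mu_y}(\xi)=e^{-h\,b_\xi(x,y)}$ and $\gamma_*\mu_x=\mu_{\gamma x}$ for $\gamma\in\Gamma$; in particular $x\mapsto\mu_x(X(\infty))$ is positive, continuous and $\Gamma$-invariant, hence bounded between two positive constants by cocompactness. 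From this I would prove a Sullivan-type shadow lemma: for $\rho$ large there is $C=C(\rho)$ with $C^{-1}e^{-h\,d(x,\gamma x)}\le\mu_x\big(\mathcal{O}_\rho(x,\gamma x)\big)\le C\,e^{-h\,d(x,\gamma x)}$ for all $\gamma\in\Gamma$, where $\mathcal{O}_\rho(x,\gamma x)\subset X(\infty)$ is the set of $\xi$ such that the ray from $x$ to $\xi$ meets $B(\gamma x,\rho)$. The upper bound is immediate from the transformation rule; the lower bound uses $\mathrm{supp}\,\mu_x=X(\infty)$ (minimality of the $\Gamma$-action on $X(\infty)$, Proposition~\ref{pro8}) together with the connecting-geodesic/semi-local product properties (Propositions~\ref{pro4}--\ref{pro6}) to ensure that, viewed from $\gamma x$, the shadow carries a definite fraction of the total mass. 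Summing over the orbit --- using that at a fixed scale the shadows $\mathcal{O}_\rho(x,\gamma x)$ have bounded multiplicity, which follows from proper discontinuity of $\Gamma$ --- yields the two-sided orbit count $N(x,r):=\#\{\gamma\in\Gamma:d(x,\gamma x)\le r\}\asymp e^{hr}$, uniformly in $x$.

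Passing from counting to volume is the usual fundamental-domain comparison: if $F$ is a Dirichlet fundamental domain centered at $x$ with $\mathrm{diam}\,F\le D$, then $\mathrm{Vol}(M)\,N(x,r-D)\le b(x,r)\le\mathrm{Vol}(M)\,N(x,r+D)$, so $b(x,r)\asymp e^{hr}$; and with $T$ chosen so that the lower estimate for $N(x,r+T-D)$ beats the upper estimate for $N(x,r+D)$ we get $b(x,r+T)-b(x,r)\asymp e^{hr}$. Now combine this with the monotonicity of $s(x,\cdot)$: from $b(x,r+T)-b(x,r)=\int_r^{r+T}s(x,t)\,dt$ and $s(x,r)\le s(x,t)\le s(x,r+T)$ for $t\in[r,r+T]$ one gets $T\,s(x,r)\le b(x,r+T)-b(x,r)$ and $b(x,r)-b(x,r-T)\le T\,s(x,r)$, hence $\tfrac1T\big(b(x,r)-b(x,r-T)\big)\le s(x,r)\le\tfrac1T\big(b(x,r+T)-b(x,r)\big)$, and both outer terms are $\asymp e^{hr}$. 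This gives $\mathrm{Vol}(S(x,r))\asymp e^{hr}$ for all $r\ge r_0$, which is the assertion, with a constant $a$ independent of $x$.

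I expect the main obstacle to be the lower bound in the shadow lemma (equivalently, the lower bound in the orbit count): without a lower curvature bound or global convexity, and with the singular set obstructing any naive comparison of spherical measures with $\mu_x$, the needed uniformity must be squeezed out of cocompactness together with the geometric Propositions~\ref{pro1}--\ref{pro8} (continuity of geodesics at infinity, the duality condition, existence of connecting geodesics, minimality on $X(\infty)$) established earlier in the paper. By contrast, the monotonicity input for $s(x,\cdot)$ is precisely the no-focal-points ``no maxima'' property and is straightforward once these ingredients are available.
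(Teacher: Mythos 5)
Your proposal is correct, and it reaches the conclusion by a genuinely different route from the paper's. The paper works directly on the sphere: it takes a maximal $3R$-separated set $\{q_1,\dots,q_k\}$ on $S(x,r)$, pins down $k\asymp e^{hr}$ by comparing the total mass $\mu_x(X(\infty))$ with the sum of the shadow masses $\mu_x(pr_xB(q_i,\rho))$ furnished by Proposition \ref{pro13}(3), and then converts the count into $(n-1)$-dimensional volume via a uniform two-sided bound on $\mathrm{Vol}(B(q_i,\rho)\cap S(x,r))$, which is asserted from compactness of $M$. You instead count orbit points: note that your ``Sullivan-type shadow lemma'' is exactly Proposition \ref{pro13}(3) combined with the uniform total-mass bounds of Proposition \ref{pro11}, so the lower bound you single out as the main obstacle is already established in the paper and need not be re-derived. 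From it, together with bounded multiplicity of shadows at a fixed scale and the fundamental-domain comparison, you get $N(x,r)\asymp e^{hr}$, hence $\mathrm{Vol}(B(x,r))\asymp e^{hr}$ and a two-sided annulus estimate, and you then extract the sphere volume by differentiating, using that the radial area element $\det A_v(r)$ is nondecreasing --- a clean and correctly justified consequence of the no-focal-points hypothesis. The trade-off is clear: your route replaces the paper's somewhat delicate uniform lower bound on $\mathrm{Vol}(B(q_i,\rho)\cap S(x,r))$ (whose justification itself implicitly rests on radial expansion of $\exp_x$) by the transparent monotonicity of $r\mapsto\mathrm{Vol}(S(x,r))$, at the cost of an extra layer of reductions (orbit count $\to$ ball volume $\to$ annulus $\to$ sphere) and the multiplicity bookkeeping for overlapping shadows, which the paper sidesteps by separating the points $q_i$ on the sphere itself. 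Both arguments yield the constant $a$ uniformly in $x$, yours via $\Gamma$-equivariance of the Busemann density and cocompactness.
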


A straightforward application of Theorem A is estimating the distribution of the regular and singular primitive closed geodesics with a given upper-bound of the periods on a compact rank $1$ manifold without focal points.  Here, we say a closed geodesic is primitive, if it is not an iterate of another closed geodesic. If a closed geodesic staying in the regular set, we call it a regular closed geodesic, otherwise we call it a singular closed geodesics. Let $\mathcal{P}(M)$ be a maximal set of geometrically distinct primitive closed geodesics which represent different free homotopy classes. For each constant $t>0$, we use $\mathcal{P}(t)$ to denote the set of geodesics in $\mathcal{P}(M)$ with least periods less than or equal to $t$. Note that since the flat strip theorem holds for manifolds without focal points, the least periods of all closed geodesics are equal in a given free homotopy class. Let $\mathcal{P}_{\text{reg}}(t)\subset \mathcal{P}(t)$ be the subset of regular closed geodesics in $\mathcal{P}(t)$, and $\mathcal{P}_{\text{sing}}(t)= \mathcal{P}(t) \backslash \mathcal{P}_{\text{reg}}(t)$ be the subset of singular closed geodesics in $\mathcal{P}(t)$. Denote by $P_{\text{reg}}(t):= \sharp \mathcal{P}_{\text{reg}}(t)$ and $P_{\text{sing}}(t):= \sharp \mathcal{P}_{\text{sing}}(t)$. Then Theorem A (2) and Theorem C imply the following result:

\begin{TheoremD}\label{ThmD}
Let $(M,g)$ be a compact rank $1$ Riemannian manifold without focal points. Then there exist $a>0$ and $t_1>0$ such that for all $t>t_1$,
$$\frac{e^{ht}}{at}\leq P_{\text{reg}}(t)\leq a e^{ht}.$$
Moreover, there exist positive constants $\epsilon$ and $t_{2}$ such that
$$\frac{P_{\text{sing}}(t)}{P_{\text{reg}}(t)}\leq e^{-\epsilon t}, ~~t> t_{2}.$$
\end{TheoremD}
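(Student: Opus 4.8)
The plan is to combine three estimates in the spirit of Margulis and Knieper. Fix a basepoint $x_0\in X$ and set $D=\text{diam}(M)$. Since $\Gamma$ is torsion-free (it acts freely) and cocompact, and $X$ has no conjugate points, every nontrivial element of $\Gamma$ is axial; a primitive closed geodesic of $M$ of length $\ell$ lifts to an axis in $X$ of such a $\gamma$ with translation length $\ell$, and by the flat strip theorem $\ell$ depends only on the conjugacy class of $\gamma$. Thus $\mathcal{P}(t)$ is in bijection with the conjugacy classes of primitive axial elements of translation length $\le t$, and $\mathcal{P}_{\text{reg}}(t)$ (resp. $\mathcal{P}_{\text{sing}}(t)$) with those whose axis lies in $\text{reg}$ (resp. meets $\text{sing}$). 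For the \emph{upper bound} $P_{\text{reg}}(t)\le P(t)\le ae^{ht}$: for each class choose, after conjugation, a representative $\gamma$ whose axis passes within $D$ of $x_0$, so $d(x_0,\gamma x_0)\le\ell(\gamma)+2D\le t+2D$; distinct classes give distinct group elements, hence $P(t)\le N(t+2D)$ with $N(R):=\#\{\gamma\in\Gamma:d(x_0,\gamma x_0)\le R\}$. The $\Gamma$-translates of a fundamental domain containing $x_0$ of diameter $\le 2D$ have disjoint interiors and, whenever $d(x_0,\gamma x_0)\le R$, lie in $B(x_0,R+2D)$, so $N(R)\,\text{Vol}(M)\le\text{Vol}(B(x_0,R+2D))$; integrating the upper estimate of Theorem C gives $\text{Vol}(B(x_0,R))\le a_1 e^{hR}$ for $R\ge 1$, and the bound follows.

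For the \emph{lower bound} $P_{\text{reg}}(t)\ge e^{ht}/(at)$ I would use the Patterson--Sullivan/Knieper measure $m$ on $SM$ produced in the proof of Theorem A: it is the unique measure of maximal entropy, $h_m(\phi)=h$, and, as established within that proof (this is also the heart of the entropy gap), it satisfies $m(\text{sing})=0$. Fix a flow box $U$ with $\overline{U}\subset\text{reg}$ and $m(U)>0$. The semi-local geometry of rank $1$ geodesics coming from Propositions~\ref{pro4}--\ref{pro6} provides a local product structure together with a closing/shadowing property for orbit segments with rank $1$ endpoints (this replaces the convexity arguments used in nonpositive curvature): any orbit segment of length close to $t$ that begins and ends in $U$ closes up to a genuine closed geodesic $\v$-close to it, which is necessarily regular. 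A Margulis--Bowen-type count with $m$ then yields $\gtrsim e^{ht}$ pairwise $(t,\v)$-separated such segments, while each closed geodesic of length $\le t$ is the closure of at most $O(t/\v)$ of them (the $\v$-separated phases along a single period, the lengths varying in a window of bounded size). Dividing gives $P_{\text{reg}}(t)\ge e^{ht}/(at)$ after enlarging $a$.

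For the \emph{ratio}, set $c:=h_{\text{top}}(\phi|_{\text{sing}})$, which by Theorem A(2) satisfies $c<h$. A singular primitive closed geodesic lies in the compact invariant set $\text{sing}$, and two of them on distinct conjugacy classes can fail to be $(t,\v)$-separated only when their axes are parallel inside a common flat; by the flat strip and higher rank structure, the closed geodesics of length $\le t$ inside one such flat are lattice points in a Euclidean ball of bounded dimension $N$, hence number $O(t^{N})$. Grouping by flats and applying the definition of $h_{\text{top}}(\phi|_{\text{sing}})$ through $(t,\v)$-separated sets gives $P_{\text{sing}}(t)\le a_2\,t^{N}e^{(c+\v)t}$ for every $\v>0$. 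Combining with the lower bound,
$$\frac{P_{\text{sing}}(t)}{P_{\text{reg}}(t)}\ \le\ a\,a_2\,t^{N+1}\,e^{-(h-c-\v)t}\ \le\ e^{-\epsilon t}$$
for $\v$ small, any $0<\epsilon<h-c$, and all large $t$, completing the argument.

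I expect the main obstacle to be the lower bound: upgrading a soft estimate $P_{\text{reg}}(t)\ge e^{(h-\delta)t}$ to the sharp $e^{ht}/(at)$ requires interlocking the local product structure of the Knieper measure near $\text{reg}$ with the rank $1$ closing lemma, both of which rest on the no-focal-points geometry (the ``no maxima'' analogues of convexity) built in the earlier Propositions. The second delicate point is the bound on $P_{\text{sing}}(t)$: since $\phi|_{\text{sing}}$ is far from expansive, one cannot a priori control its periodic-orbit growth by its topological entropy, so the flat (higher rank) structure of singular geodesics has to be used to dispose of the flat families.
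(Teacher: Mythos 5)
Your upper bound and your treatment of the ratio are essentially the paper's: the upper bound follows from Theorem C by a covering/orbit-counting argument, and the ratio follows from the entropy gap of Theorem A(2) together with the lower bound on $P_{\text{reg}}(t)$. For the singular count you over-engineer the step you worry about: the paper simply observes that one representative per free homotopy class yields an $(n(t),\delta)$-separated set in $\text{sing}$ with $\delta=\text{Inj}(M)$ (distinct classes cannot $\delta$-shadow each other over a full period), so $\limsup\frac1t\log P_{\text{sing}}(t)\le h_{\text{top}}(\phi|_{\text{sing}})$ directly; no flat-by-flat lattice count is needed, since the continuum of parallel closed geodesics inside a flat strip all lie in one free homotopy class and are counted once.

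The genuine gap is the lower bound $P_{\text{reg}}(t)\ge e^{ht}/(at)$. You reduce it to (i) a closing/shadowing lemma (``any orbit segment of length close to $t$ that begins and ends in $U$ closes up to a closed geodesic $\varepsilon$-close to it'') and (ii) a Margulis--Bowen count producing $\gtrsim e^{ht}$ pairwise $(t,\varepsilon)$-separated such segments. Neither is established in the paper, and neither follows from Propositions \ref{pro4}--\ref{pro6} alone. For (i), those propositions only connect boundary points near the two endpoints of a fixed rank $1$ geodesic by a rank $1$ geodesic; turning a recurrent orbit segment into a \emph{periodic} one requires producing an axial isometry, which is exactly what the paper's Lemma \ref{lem56} does algebraically, via the north--south dynamics of Lemma \ref{pro7}, rather than dynamically. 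For (ii), knowing $h_{\mu}(\phi)=h$ gives $e^{ht}$ separated orbit segments but not ones constrained to start and end in a fixed flow box; that refinement needs mixing or the shadow estimates of Proposition \ref{pro13} applied to return counting, neither of which you supply. The paper's actual route bypasses both points: it transplants Knieper's Lemmas 5.6 and 5.7 of \cite{Kn0} (stated as Lemmas \ref{lem56} and \ref{lem57}), showing that for a fixed rank $1$ axis $c$ of $\alpha$, a definite proportion of the $\gtrsim e^{ht}$ elements $\gamma\in\Gamma_t(x_0)$ (counted via Theorem C) produce rank $1$ axial elements $\alpha^{n}\gamma^{-1}\alpha^{n}$ of translation length at most $t+\rho$ whose axes pass through a fixed compact neighborhood of $c'(0)$; dividing by the at most $O(t)$ such elements per conjugacy class gives $e^{ht}/(at)$. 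If you wish to keep your dynamical route, the rank $1$ closing lemma and the return-counting estimate must be proved, not assumed.
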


This paper is organized in the following way: In Section \ref{geometric}, we study the geometric properties of rank $1$ manifolds without focal points, which will be frequently used in our subsequent discussion.
We present the Patterson-Sullivan construction for rank $1$ manifolds without focal points and prove Theorem B in Section \ref{Busemann}. A key technical result about the Patterson-Sullivan measure is prepared in Subsection \ref{proj}. Then Theorem C is proved in Sections \ref{volume}. In the last two Sections \ref{maximal} and \ref{closedgeodesic}, we prove Theorems A and D. We should point out that our main arguments follow the ideas of Knieper's work in \cite{Kn0} and \cite{Kn1}, which are showed to work for geodesic flows on rank $1$ manifolds without focal points, based on the geometric properties we establish in this paper.

\section{\bf Geometric properties of rank $1$ manifolds without focal points}\label{geometric}
\setcounter{section}{2}
\setcounter{equation}{0}\setcounter{theorem}{0}

In this section, we present some geometric results on rank $1$ manifolds without focal points, which will be used in the following discussions. We remark that, although in this paper we only discuss the rank $1$ manifolds without focal points, some of the results are also valid in more general situations.

Let $X$ be the universal covering manifold of $M$ and $d$ is the distance function on $X$ induced by the lifted Riemannian metric $\tilde{g}$ on $X$. Suppose $h_{1}$ and $h_{2}$ are both geodesics in $X$. We call $h_{1}$ and $h_{2}$ are \emph{positively asymptotic} if there is a positive number $C > 0$ such that
\begin{equation}\label{e1}
d(h_{1}(t),h_{2}(t)) \leq C, ~~\forall~ t \geq 0.
\end{equation}
We say $h_{1}$ and $h_{2}$ are \emph{negatively asymptotic} if \eqref{e1} holds for all $t \leq 0$. $h_{1}$ and $h_{2}$ are said to be \emph{biasymptotic} if they are both positively asymptotic and negatively asymptotic.
The relation of (positive / negative) asymptoticity is an equivalence relation between geodesics on $X$. The class of geodesics that are positively / negatively asymptotic to a given geodesic $\gamma$ is denoted by $\gamma(+\infty)$ / $\gamma(-\infty)$ respectively. We call them \emph{points at infinity}. Obviously, $\gamma_{v}(-\infty)=\gamma_{-v}(+\infty)$. We use $X(\infty)$ to denote the set of all points at infinity,
and call it the \emph{boundary at infinity}, or the \emph{ideal boundary}.

Let $\overline{X}=X \cup X(\infty)$.
For each point $p \in X$ and $v\in S_{p}X$, each points $x, y \in \overline{X}-\{p\}$, positive numbers $\epsilon$ and $r$,
and subset $A\subset\overline{X}-\{p\}$,
we define the following notations:
\begin{itemize}
\item{} ~~$\gamma_{p,x}$ is the geodesic from $p$ to $x$ and satisfies $\gamma_{p,x}(0)=p$.
\item{} ~~$\measuredangle_{p}(x,y)=\measuredangle(\gamma'_{p,x}(0),\gamma'_{p,y}(0))$.
\item{} ~~$\measuredangle_{p}(A)=\sup \{\measuredangle_{p}(a,b)\mid a,\ b \in A \}$.
\item{} ~~$\measuredangle(v,x)=\measuredangle(v,\gamma'_{p,x}(0))$.
\item{} ~~$C(v,\epsilon)=\{a \in \overline{X}-\{p\}\mid \measuredangle(v,a)< \epsilon\}$.
\item{} ~~$C_{\epsilon}(v) =C(v,\epsilon)\cap X(\infty)= \{\gamma_{w}(+\infty)\mid w \in  S_{x}X, \angle(v,w)<\epsilon\}$.
\item{} ~~$TC(v,\epsilon,r)= \{q \in \overline{X}\mid \measuredangle_{p}(\gamma_{v}(+\infty),q)< \epsilon\} - \{q \in \overline{X}\mid d(p,q)\leq r\}$.
\end{itemize}

$TC(v,\epsilon,r)$ is called the \emph{truncated cone} with axis $v$ and angle $\epsilon$. Obviously $\gamma_{v}(+\infty) \in TC(v,\epsilon,r)$. There is a unique topology $\tau$ on $\overline{X}$ such that for each $\xi \in X(\infty)$
the set of truncated cones containing $\xi$ forms a local basis for $\tau$ at $\xi$. This topology is usually called the \emph{cone topology}. Under this topology, $\overline{X}$ is homeomorphic to the closed unit ball in $\mathbb{R}^{\text{dim}(X)}$, and the ideal boundary $X(\infty)$ is homeomorphic to the unit sphere $\mathbb{S}^{\text{dim}(X)-1}$.
For more details about the cone topology, see \cite{EbON} and \cite{Eb1}.

The following results of manifolds without focal points are well known. We state them as two lemmas here, for these results will be used very often in this paper.

\begin{lemma}[cf.~O'Sullivan \cite{Os}]\label{OS}
Let $X$ be a simply connected Riemannian manifold without focal points.
\begin{enumerate}
\item Let $h_{1}$ and $h_{2}$ be a pair of distinct geodesic rays starting from a same point in $X$. Then for $t>0$, the function $d_{h_1,h_2}(t):=d(h_{1}(t),h_{2}(t))$ is strictly increasing, and $\lim_{t\rightarrow+\infty}d_{h_1,h_2}(t)=+\infty$.
\item Let $h_{1}$ and $h_{2}$ be a pair of positively asymptotic geodesics. Then the distance function $d_{h_1,h_2}(t)=d(h_{1}(t),h_{2}(t))$ is non-increasing for all $t \in \mathbb{R}$.
\end{enumerate}
\end{lemma}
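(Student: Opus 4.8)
The plan is to derive both statements from the second variation formula together with the defining no-focal-points condition, passing from the infinitesimal (Jacobi field) level to the macroscopic (distance function) level via a standard Rauch-type comparison argument. For part (1), fix the common starting point $p$, and for each $t>0$ consider the geodesic variation of $h_1|_{[0,t]}$ through geodesics issuing from $p$ whose terminal points sweep along a short geodesic joining $h_1(t)$ to $h_2(t)$; the variation field $J$ is a Jacobi field along $h_1$ with $J(0)=0$. The first derivative of $d_{h_1,h_2}$ at a given $t$ is controlled by $\langle J(t),h_1'(t)\rangle/|J(t)|$-type quantities, and the key point is monotonicity of $|J(t)|$: for a Jacobi field with $J(0)=0$ and $J\not\equiv 0$ on a manifold without focal points, $\frac{d}{dt}|J(t)|^2>0$ for all $t>0$. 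This is essentially the definition of "no focal points" (the function $t\mapsto|J(t)|^2$ has no critical point for $t>0$ once it has started increasing), so $|J(t)|$ is strictly increasing, hence $d_{h_1,h_2}$ is strictly increasing; since $h_1\ne h_2$ the field $J$ is nontrivial, and strict increase plus the fact that geodesic rays from a point in a manifold without conjugate points do not reconverge forces $d_{h_1,h_2}(t)\to+\infty$.

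For part (2), suppose $h_1$ and $h_2$ are positively asymptotic, so $d(h_1(t),h_2(t))\le C$ for all $t\ge 0$. I would argue by contradiction: if $d_{h_1,h_2}$ fails to be non-increasing on all of $\mathbb{R}$, there exist $s<t$ with $d_{h_1,h_2}(s)<d_{h_1,h_2}(t)$. The idea is to exploit part (1) "in reverse": join $h_1(s)$ to $h_2(s)$ by a minimizing geodesic and flow the endpoints forward; the distance between the time-$\tau$ points of the resulting family of geodesics is bounded below by a strictly increasing function of the elapsed time (by the monotonicity established via part (1) applied on each geodesic of the interpolating family, or directly by the convexity-type estimate that in a manifold without focal points $d(h_1(\tau),h_2(\tau))$ grows at least linearly once it is increasing). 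This would force $d_{h_1,h_2}(\tau)\to\infty$ as $\tau\to+\infty$, contradicting the uniform bound $C$. Hence $d_{h_1,h_2}$ is non-increasing for $t\ge 0$; applying the same to the negatively asymptotic pair (or just noting the argument is time-symmetric once we know boundedness propagates) extends this to all $t\in\mathbb{R}$.

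The main obstacle is making the passage from the Jacobi-field monotonicity to the distance-function monotonicity rigorous when $h_1(t)$ and $h_2(t)$ are not joined by a unique geodesic a priori, and handling the non-smoothness of $d_{h_1,h_2}$ at points where the connecting geodesic is not unique. The standard fix is to work with one-sided derivatives (the function $d_{h_1,h_2}$ is locally Lipschitz, being a composition of the distance function with a smooth curve, hence differentiable a.e. and equal to the integral of its derivative), and to use the first variation formula with an arbitrary minimizing connecting geodesic to get the sign of the (one-sided) derivative; the no-focal-points hypothesis guarantees the needed inequality uniformly. Since this is a well-known result of O'Sullivan, I would cite \cite{Os} for the details and only indicate the above second-variation mechanism, as the lemma is stated here purely as a tool for later arguments.
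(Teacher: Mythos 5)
The paper offers no proof of this lemma at all: it is imported verbatim from O'Sullivan's paper and used as a black box, so there is no in-paper argument to compare yours against. Your sketch does identify the correct infinitesimal mechanism --- the definition of ``no focal points'' forces $\frac{d}{dt}\|J(t)\|^{2}>0$ for all $t>0$ whenever $J$ is a nontrivial Jacobi field with $J(0)=0$, and integrating $|J_u|$ over the geodesic variation based at the common footpoint and terminating on a minimizing geodesic from $h_1(t_2)$ to $h_2(t_2)$ converts this into monotonicity of $d(h_1(t),h_2(t))$ --- and deferring the remaining details to \cite{Os} is exactly what the authors themselves do.

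That said, two steps in your outline would not survive being written out. First, in part (1) the conclusion $\lim_{t\to+\infty}d_{h_1,h_2}(t)=+\infty$ does \emph{not} follow from ``strictly increasing plus geodesics do not reconverge'': a strictly increasing function can be bounded, and without a curvature sign $\|J\|^{2}$ need not be convex, so monotonicity of $\|J\|$ gives no lower bound on its growth. Divergence of geodesic rays from a point is a separate, genuinely nontrivial theorem for manifolds without focal points (it is open in general for manifolds that merely lack conjugate points), and its proof in \cite{Os} goes through additional analysis (Riccati-type estimates), not through the monotonicity alone. Second, your argument for part (2) rests on the claim that $d(h_1(\tau),h_2(\tau))$ ``grows at least linearly once it is increasing''; that claim is essentially equivalent to the statement being proved (bounded on $[0,\infty)$ implies non-increasing) and does not follow from part (1), which only concerns rays issuing from a \emph{common} point --- $h_1(s)$ and $h_2(s)$ have no common footpoint, so part (1) is not applicable to the forward flow of a connecting geodesic. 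The standard route to (2) is instead a limiting argument: one uses the time-reversed form of the Jacobi monotonicity (fields vanishing at the far endpoint $T$ have non-increasing norm on $[0,T]$), applies it to the geodesics from points of $h_2$ to $h_1(T)$, and lets $T\to+\infty$ using asymptoticity. As a citation-backed restatement your treatment is acceptable, matching the paper's own practice; as a self-contained proof it has these two real gaps.
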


\begin{remark}
Lemma \ref{OS} implies that on a simply connected Riemannian manifold without focal points, two distinct geodesics can not be positively or negatively asymptotic if they cross at some point on $X$.
\end{remark}

\begin{lemma}[cf. for example Proposition 2.8 in Katok \cite{Ka}]\label{Ka}
Let $h_{1}$ and $h_{2}$ be two distinct geodesics on a simply connected Riemannian manifold without focal points,
then for any $a \in \mathbb{R}$ and $t\in[0,a]$, the following inequality holds:
$$d(h_{1}(t),h_{2}(t)) \leq d(h_{1}(0),h_{2}(0))+d(h_{1}(a),h_{2}(a)).$$
\end{lemma}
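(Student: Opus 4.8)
The plan is to reduce the inequality to a bound on the norm of a Jacobi field by means of a two-parameter variation through geodesics, and then to invoke the no-focal-points hypothesis precisely in the form of its definition (equivalently, of Lemma \ref{OS}(1)). We may assume $a>0$. Since $X$ is simply connected and has no conjugate points, $\exp_p$ is a diffeomorphism for each $p\in X$, so any two points of $X$ are joined by a unique geodesic depending smoothly on its endpoints. Let $\alpha\colon[0,1]\to X$ be the affinely parametrized geodesic from $h_1(0)$ to $h_2(0)$ and $\beta\colon[0,1]\to X$ the affinely parametrized geodesic from $h_1(a)$ to $h_2(a)$; for $u\in[0,1]$ let $c_u\colon[0,a]\to X$ be the affinely parametrized geodesic from $\alpha(u)$ to $\beta(u)$, and put $\Phi(u,t)=c_u(t)$. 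By uniqueness of geodesics, $\Phi(0,\cdot)=h_1$ and $\Phi(1,\cdot)=h_2$ on $[0,a]$, so for fixed $t$ the curve $u\mapsto\Phi(u,t)$ joins $h_1(t)$ to $h_2(t)$ and hence
\[
d(h_1(t),h_2(t))\ \le\ \int_0^1\bigl\|\partial_u\Phi(u,t)\bigr\|\,du .
\]
For each fixed $u$, $J_u(t):=\partial_u\Phi(u,t)$ is a Jacobi field along $c_u$ with $\|J_u(0)\|=\|\alpha'(u)\|=d(h_1(0),h_2(0))$ and $\|J_u(a)\|=\|\beta'(u)\|=d(h_1(a),h_2(a))$, the norms being constant in $u$ because of the affine parametrizations.

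It therefore suffices to show that any Jacobi field $J$ along a geodesic in $X$ satisfies $\|J(t)\|\le\|J(0)\|+\|J(a)\|$ for $t\in[0,a]$. Using that $X$ has no conjugate points, write $J=J_1+J_2$, where $J_1,J_2$ are the unique Jacobi fields with $J_1(0)=J(0),\ J_1(a)=0$ and $J_2(0)=0,\ J_2(a)=J(a)$. Since $J_1$ vanishes at $a$ (and $J_1'(a)\neq0$ unless $J_1\equiv0$), the definition of "without focal points" gives $\frac{d}{dt}\|J_1(t)\|^2\neq0$ for every $t\neq a$; as $\|J_1\|^2$ vanishes at $a$ and is positive near it, $\|J_1\|$ is strictly increasing in $|t-a|$, so $\|J_1(t)\|\le\|J_1(0)\|=\|J(0)\|$ on $[0,a]$. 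Symmetrically $\|J_2(t)\|\le\|J(a)\|$ on $[0,a]$, and the triangle inequality finishes this step. Feeding the bound back into the display yields $d(h_1(t),h_2(t))\le\int_0^1\bigl(\|J_u(0)\|+\|J_u(a)\|\bigr)\,du=d(h_1(0),h_2(0))+d(h_1(a),h_2(a))$, as desired. (A fiber $c_u$ with $\alpha(u)=\beta(u)$ is harmless: there $c_u$ is constant, $J_u$ is affine in $t$, and $\|J_u(t)\|\le\max\{\|J_u(0)\|,\|J_u(a)\|\}$ is immediate.)

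The substance of the argument, and the only place where the hypothesis genuinely enters, is the Jacobi field estimate — specifically the monotonicity of $\|J_1\|$ and $\|J_2\|$ away from their zeros, which is exactly where "no focal points" is needed rather than merely "no conjugate points". I expect no real obstacle beyond this: the remaining ingredients (the diffeomorphism property of $\exp_p$, smooth dependence of connecting geodesics on their endpoints, comparison of arclength with distance, and the identification of $\partial_u\Phi$ as a Jacobi field) are standard. If one prefers to stay strictly within the results already recorded in this paper, the needed monotonicity of $\|J_1\|$ can instead be read off from Lemma \ref{OS}(1) applied to a family of geodesic rays based at the point $c_u(a)$.
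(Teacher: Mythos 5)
Your proof is correct. Note that the paper itself offers no argument for this lemma—it is stated with a citation to Proposition 2.8 of Katok's \emph{Entropy and closed geodesics}—so there is no internal proof to compare against; what you have written is essentially the standard argument from the literature on manifolds without focal points. The reduction to the Jacobi field estimate via the two-parameter geodesic variation is sound (using that $\exp_p$ is a diffeomorphism, so connecting geodesics exist, are unique, minimizing, and depend smoothly on endpoints), and the decomposition $J=J_1+J_2$ with the monotonicity of $\|J_1\|^2$ away from its zero is exactly where the no-focal-points hypothesis is used, in the precise form of the paper's Definition. You correctly dispose of the degenerate cases ($J_1\equiv 0$, constant fibers $c_u$), and the observation that the boundary norms $\|J_u(0)\|,\|J_u(a)\|$ are independent of $u$ makes the final integration immediate. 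The only cosmetic caveat is that $c_u$ need not be unit speed, whereas the paper's focal-point definition is phrased for unit-speed geodesics; since the relevant conditions are invariant under affine reparametrization this is harmless, but worth a half-sentence if the proof were to be included.
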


Our first result in this section is the property of \emph{the continuity to infinity}. We define a metric $\delta$ on $SX$ by $$\delta(v,w):=d(\pi v, \pi w)+d(\exp v , \exp w).$$ It can be shown that this distance is equivalent to the distance induced by the standard Sasaki metric. The following proposition shows the continuity of the map $(v,t) \mapsto \gamma_{v}(t)$ from $SX \times [-\infty,+\infty]$ to $\overline{X}$ with respect to the metric $\delta$,
on a simply connected manifold without focal points. It is the no focal points version of Proposition 2.13 of \cite{EbON}.

\begin{proposition}[Continuity to infinity]\label{pro1}
 Let $X$ be a simply connected manifold without focal points, the map
 $$\Psi: SX \times [-\infty,+\infty] \rightarrow \overline{X}=X\cup X(\infty),$$
 $$(v,t) \mapsto \gamma_{v}(t).$$
 is continuous.
\end{proposition}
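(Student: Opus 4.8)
\noindent\emph{Proof plan.}\ The plan is to reduce everything to sequences and to a single delicate case. Since $SX\times[-\infty,+\infty]$ (metrized, say, by $\delta$ together with any metric on $[-\infty,+\infty]$) and $\overline X$ are metrizable, it suffices to prove sequential continuity: if $(v_n,t_n)\to(v,t)$ then $\gamma_{v_n}(t_n)\to\gamma_v(t)$ in the cone topology. If $t\in\R$, then $\gamma_{v_n}(t_n)\to\gamma_v(t)$ already in $X$ with its manifold topology, by continuous dependence of the geodesic flow on initial data, and the cone topology agrees with the manifold topology on $X$. If $t=-\infty$, apply the case $t=+\infty$ to $(-v_n,-t_n)\to(-v,+\infty)$, using $\gamma_v(t)=\gamma_{-v}(-t)$ and $\gamma_v(-\infty)=\gamma_{-v}(+\infty)$. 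So the whole content is the case $(v_n,t_n)\to(v,+\infty)$, where one must show $x_n:=\gamma_{v_n}(t_n)\to\xi:=\gamma_v(+\infty)\in X(\infty)$.

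Recall that for $p\in X$ and $\xi\in X(\infty)$ there is a \emph{unique} geodesic ray $\gamma_{p,\xi}$ from $p$ asymptotic to $\xi$: existence is a standard limiting argument (extract a convergent subsequence of $\gamma'_{p,\gamma_v(m)}(0)$ in the compact sphere $S_pX$ and pass to the limit, bounding distances along the way by Lemma \ref{Ka}), and uniqueness follows from Lemma \ref{OS}(1) and the Remark after it, since two distinct rays from $p$ cannot both be positively asymptotic to $\xi$. Consequently the truncated cones $TC(\gamma'_{p,\xi}(0),\epsilon,r)$, $p\in X$, $\epsilon,r>0$, form a neighborhood basis of $\xi$, and $x_n\in TC(\gamma'_{p,\xi}(0),\epsilon,r)$ for large $n$ is equivalent to $d(p,x_n)>r$ together with $\measuredangle_p(\xi,x_n)<\epsilon$. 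Hence it is enough to prove, for every base point $p$, that $d(p,x_n)\to+\infty$ and $\measuredangle_p(\xi,x_n)\to0$. The first is immediate, since $x_n$ lies at distance $t_n$ from $\pi v_n$ along a unit-speed geodesic while $d(p,\pi v_n)$ stays bounded ($\pi v_n\to\pi v$), so $d(p,x_n)\ge t_n-d(p,\pi v_n)\to+\infty$.

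The crux is the angular statement. Fix $p$, and write $w:=\gamma'_{p,\xi}(0)$, $w_n:=\gamma'_{p,x_n}(0)$ in $S_pX$, so $\measuredangle_p(\xi,x_n)=\measuredangle(w,w_n)$; I claim $w_n\to w$. If not, compactness of $S_pX$ gives a subsequence (not relabeled) with $w_n\to w'\neq w$, hence $\gamma_{w_n}(t)\to\gamma_{w'}(t)$ for every $t$. Put $a_n:=d(p,x_n)$, so $\gamma_{w_n}(a_n)=x_n=\gamma_{v_n}(t_n)$ and $|a_n-t_n|\le d(p,\pi v_n)$ is bounded. Applying Lemma \ref{Ka} to the geodesics $\gamma_{w_n}$ and $s\mapsto\gamma_{v_n}(s+t_n-a_n)$ on the interval $[0,a_n]$ — at time $0$ they are at $p$ and $\gamma_{v_n}(t_n-a_n)$, and at time $a_n$ they coincide at $x_n$ — yields, for $t\in[0,a_n]$,
\[
d\bigl(\gamma_{w_n}(t),\gamma_{v_n}(t+t_n-a_n)\bigr)\le d\bigl(p,\gamma_{v_n}(t_n-a_n)\bigr)\le d(p,\pi v_n)+|t_n-a_n|\le C,
\]
a bound independent of $n$ and $t$; therefore $d(\gamma_{w_n}(t),\gamma_{v_n}(t))\le C+|t_n-a_n|\le C'$ for $t\in[0,a_n]$. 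Fixing $t\ge0$ and letting $n\to\infty$ (so $a_n\ge t$ eventually) gives $d(\gamma_{w'}(t),\gamma_v(t))\le C'$ for all $t\ge0$, i.e. $\gamma_{w'}$ is positively asymptotic to $\gamma_v$, so $\gamma_{w'}(+\infty)=\xi$. But $\gamma_{w'}$ is a ray from $p$ distinct from $\gamma_w$, contradicting the uniqueness of the ray from $p$ to $\xi$. Hence $w_n\to w$, which gives $\measuredangle_p(\xi,x_n)\to0$ and finishes the proof.

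I expect the angular estimate to be the main obstacle. In nonpositive curvature one controls the connecting segments $\gamma_{p,x_n}$ near their far endpoints via convexity of distance functions; without focal points that convexity is gone, and its role is played by the linear ``no‑maxima'' bound of Lemma \ref{Ka}, which is just strong enough to push the limiting ray $\gamma_{w'}$ into the asymptote class of $\gamma_v$ and so contradict the uniqueness of geodesic rays to a point at infinity (Lemma \ref{OS}). The only other point needing care is this existence and uniqueness of $\gamma_{p,\xi}$, on which the very description of the cone neighborhoods rests.
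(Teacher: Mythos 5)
Your treatment of the core case ($v_n\to v$, $t_n\to+\infty$ with $t_n$ finite) is correct and is essentially the paper's argument: both project $x_n=\gamma_{v_n}(t_n)$ to a direction $w_n\in S_pX$, bound $d(\gamma_{w_n}(t),\gamma_{v_n}(t))$ on $[0,a_n]$ via Lemma \ref{Ka}, and conclude from the divergence of distinct rays issuing from a common point (Lemma \ref{OS}(1)). The only difference is the finishing move: the paper takes $p=\pi v$ and gets the direct estimate $\delta(v_n,w_n)\le 4\,d(\pi v_n,\pi v)\to 0$, whereas you keep $p$ arbitrary and argue by compactness of $S_pX$ together with uniqueness of the ray from $p$ to $\xi$. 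Your version is softer (no rate) but verifies the angular condition at every vertex, which is what the cone topology literally demands; both are valid.

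There is one genuine omission. Continuity at $(v,+\infty)$ must be tested against all sequences $(v_n,t_n)\to(v,+\infty)$ in $SX\times[-\infty,+\infty]$, including those with $t_n=+\infty$; that sub-case is precisely the continuity of $v\mapsto\gamma_v(+\infty)$, which the paper proves as a separate second half of its argument. Your computation does not apply there as written: if $x_n=\gamma_{v_n}(+\infty)\in X(\infty)$ then $a_n=d(p,x_n)$ is undefined and Lemma \ref{Ka} on $[0,a_n]$ makes no sense. The repair is short and stays within your scheme: let $w_n\in S_pX$ be the initial vector of the unique ray from $p$ asymptotic to $\gamma_{v_n}$; since these two geodesics are positively asymptotic, Lemma \ref{OS}(2) gives $d(\gamma_{w_n}(t),\gamma_{v_n}(t))\le d(p,\pi v_n)\le C$ for all $t\ge 0$, uniformly in $n$, and then your limiting/uniqueness argument shows every subsequential limit of $w_n$ equals $w$, hence $\measuredangle_p(\xi,\gamma_{v_n}(+\infty))\to 0$. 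You should state this case and the one-line bound explicitly; everything else in your proposal, including the reduction to truncated cones and the existence and uniqueness of $\gamma_{p,\xi}$, is fine.
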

\begin{proof}
The continuity is obvious when  $t\in (-\infty,+\infty)$, since the geodesic flow is continuous. First we show that if $v_{n}\rightarrow v \in SX$ and $t_{n}\rightarrow +\infty$, then $\gamma_{v_{n}}(t_{n})\rightarrow \gamma_{v}(+\infty)$.

Let $p=\pi(v),\ p_{1}=\pi(v_{1}),\ \cdots,\ p_{n}=\pi(v_{n}),\ \cdots\in X$. For each $n\in\mathbb{Z}^+$, let $w_{n}\in S_pX$ be the unique initial vector at $p$ such that $\gamma_{w_{n}}(s_{n}) = \gamma_{v_{n}}(t_{n})$ for some positive number $s_{n}>0$. By the triangle inequality, we have
\begin{equation}\label{eq1}
|s_{n}-t_{n}|\leq d(p,p_{n}).
\end{equation}
Thus, $s_{n}\rightarrow +\infty$ as $t_{n}\rightarrow +\infty$.

Let $q_{n}=\gamma_{v_{n}}(t_{n})=\gamma_{w_{n}}(s_{n})$. Now we use $\gamma_{v^{-1}_{n}}$ to denote the geodesic from $q_{n}$ to $p$ with  $\gamma_{v^{-1}_{n}}(0)=q_n$,
and $\gamma_{q_{n}}$ to denote the geodesic from $q_{n}$ to $p_{n}$ with  $\gamma_{q_{n}}(0)=q_n$, respectively.
By Lemma \ref{OS} and \eqref{eq1}, we have
$$d(\gamma_{q_{n}}(t),\gamma_{v^{-1}_{n}}(t)) \leq d(p,p_{n})+|s_{n}-t_{n}|\leq 2d(p,p_{n}),  ~~t\in [0,s_{n}].$$
Thus
\begin{eqnarray*}
\delta(v_{n},w_{n})
&=& d(\pi v_{n},\pi w_{n})+d(\exp v_{n},\exp w_{n})\\
&=& d(p_{n},p)+ d(\gamma_{q_{n}}(s_{n}-1),\gamma_{v^{-1}_{n}}(t_{n}-1))\\
&\leq& 4 d(p_{n},p).
\end{eqnarray*}
By $v_{n}\rightarrow v$ and the above inequality, we get $w_{n}\rightarrow v$. This implies that for any $\epsilon>0$ there is an $N=N(\epsilon)>0$ such that if $n>N$, then $\gamma_{v_{n}}(t_{n})\in TC(v,\epsilon, \frac{s_{n}}{2})$. So it follows that, under the cone topology,
$\gamma_{v_{n}}(t_{n})=\gamma_{w_{n}}(s_{n})\rightarrow \gamma_{v}(+\infty)$.

Now we consider the continuity of $\Psi(\cdot,+\infty):SX\to X(\infty)$. Suppose $\{v_n\}\subset SX$ is a convergent sequence with $\lim_{n\to \infty}v_n=v\in S_pX$ for some $p\in X$. For each $\gamma_{v_n}(+\infty)\in X(\infty)$, there is a unique vector $w_n\in S_p X$ such that $\gamma_{w_n}(+\infty)=\gamma_{v_n}(+\infty)$. Similar as the argument in the above we can show that $w_n\to v$.
We know on the compact space $S_{p}X$, all the metrics are equivalent, thus the metric $\delta$ defined above
is equivalent to the usual angle metric. Therefore $\lim_{n\rightarrow +\infty}\theta_{n}=0$, where $\theta_{n}$ is the angle between $v$ and $w_{n}$.
Thus, passing to a subsequence if necessary, we can assume that $\theta_{n}< \frac{1}{n}$.
This means that $\gamma_{v_{n}}(+\infty)=\gamma_{w_{n}}(+\infty) \in TC(\gamma_{v}(+\infty),\frac{1}{n},n)$.
Since $$\bigcap_{n =1}^{\infty}TC(\gamma_{v}(+\infty),\frac{1}{n},n) = \gamma_{v}(+\infty),$$ we get that $\gamma_{v_{n}}(+\infty) \rightarrow \gamma_{v}(+\infty)$.
\end{proof}

The following proposition is a corollary of Proposition \ref{pro1}. It will be used in our discussion in the next section.

\begin{proposition}\label{pro2.4}
Let $X$ be a simply connected manifold without focal points. Then for each point $p \in X$, the map
$$\gamma_{+\infty}: S_{p}X\rightarrow X(\infty), ~~v\mapsto \gamma_{+\infty}(v):=\gamma_{v}(+\infty)$$
is a homeomorphism.
\end{proposition}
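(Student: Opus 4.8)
Since $S_pX$ is compact and $X(\infty)$, with the cone topology, is Hausdorff (indeed homeomorphic to $\mathbb{S}^{\dim X-1}$, as recalled above), the plan is to show that $\gamma_{+\infty}$ is a continuous bijection and then invoke the standard fact that a continuous bijection from a compact space onto a Hausdorff space is a homeomorphism. So I would verify, in order, continuity, injectivity, and surjectivity.

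Continuity I would read off directly from Proposition \ref{pro1}: its proof already establishes that $\Psi(\cdot,+\infty)\colon SX\to X(\infty)$ is continuous, and $\gamma_{+\infty}$ is just the restriction of this map to the fiber $S_pX$. For injectivity, I would argue by contradiction: if $v\neq w$ in $S_pX$ but $\gamma_v(+\infty)=\gamma_w(+\infty)$, then $\gamma_v$ and $\gamma_w$ are positively asymptotic geodesic rays issuing from the same point $p$, which contradicts Lemma \ref{OS}(1) (equivalently, the Remark after it), since $d(\gamma_v(t),\gamma_w(t))\to+\infty$ as $t\to+\infty$ and hence these two rays cannot remain within bounded distance.

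The real content is surjectivity. Given $\xi\in X(\infty)$, I would fix a geodesic $\sigma$ with $\sigma(+\infty)=\xi$ and, for each $n$, let $v_n\in S_pX$ be the initial velocity of the geodesic from $p$ to $\sigma(n)$, so that $\gamma_{v_n}(r_n)=\sigma(n)$ where $r_n:=d(p,\sigma(n))\ge n-d(p,\sigma(0))\to+\infty$. By compactness of $S_pX$, after passing to a subsequence, $v_{n_k}\to v$ for some $v\in S_pX$. Then Proposition \ref{pro1}, applied to $(v_{n_k},r_{n_k})\to(v,+\infty)$, gives $\sigma(n_k)=\gamma_{v_{n_k}}(r_{n_k})\to\gamma_v(+\infty)$ in $\overline X$; and Proposition \ref{pro1} again, applied to the constant sequence $\sigma'(0)$ with times $n_k\to+\infty$, gives $\sigma(n_k)\to\sigma(+\infty)=\xi$. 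Uniqueness of limits in the Hausdorff space $\overline X$ then forces $\gamma_v(+\infty)=\xi$, so $\gamma_{+\infty}$ is onto.

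I expect this surjectivity step to be the only place that uses more than soft topology, and its crux is the continuity-to-infinity property of Proposition \ref{pro1} invoked with the \emph{unbounded} sequence of times $r_n=d(p,\sigma(n))$; everything else (continuity, injectivity, and the final compact-to-Hausdorff conclusion) is formal. One small thing to double-check is that the connecting geodesic from $p$ to $\sigma(n)$ is well defined, which holds because in a simply connected manifold without focal points — hence without conjugate points — the exponential map $\exp_p$ is a diffeomorphism.
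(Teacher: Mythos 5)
Your proposal is correct and follows essentially the same route as the paper: continuity comes from Proposition \ref{pro1}, injectivity from Lemma \ref{OS}(1), and your compact-to-Hausdorff conclusion is just the packaged form of the paper's sequential argument for continuity of the inverse (pass to a convergent subsequence, apply Proposition \ref{pro1}, and use uniqueness of the limit). The only difference is that you spell out surjectivity via the limiting construction with $v_n=\gamma'_{p,\sigma(n)}(0)$, a step the paper subsumes into the bare assertion that the map is bijective; your argument for it is sound.
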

\begin{proof}
By Lemma \ref{OS}(1) we know that the map $\gamma_{+\infty}$ is bijective. Proposition \ref{pro1} implies that $\gamma_{+\infty}$ is a continuous map from $S_{p}X$ to $X(\infty)$.
Thus it suffices to show that the inverse map $\gamma^{-1}_{+\infty}:X(\infty)\rightarrow S_{p}X$ is continuous.

For each $\xi \in X(\infty)$ and an arbitrary sequence $\{\xi_{n}\}\subset X(\infty)$ with $\lim_{n\rightarrow +\infty}\xi_{n}=\xi$,
we want to show that $\lim_{n\rightarrow +\infty}\gamma^{-1}_{+\infty}(\xi_{n})=\gamma^{-1}_{+\infty}(\xi)$.
Let $v_{n}=\gamma^{-1}_{+\infty}(\xi_{n}) \in S_{p}X$, $n=1,2,\cdots$, and $v=\gamma^{-1}_{+\infty}(\xi) \in S_{p}X$.
Passing to a subsequence if needed, we can assume that $\lim_{n\rightarrow +\infty}v_{n}=w$ for some $w\in S_{p}X$. By Proposition \ref{pro1},
we know that $$\gamma_{w}(+\infty)=\lim_{n\rightarrow +\infty}\gamma_{v_{n}}(+\infty)=\lim_{n\rightarrow +\infty}\xi_{n}=\xi.$$
Then Lemma \ref{OS}(1) implies that $w=v$, which means that the inverse map $\gamma^{-1}_{+\infty}$ is continuous.
We are done with the proof.
\end{proof}

For each $p \in X$, let $B_{p}=\{v \in T_{p}X \mid\ \| v\| \leq 1\} \subset T_{p}X$, which is the unit ball in $T_pX$.
Using the same method in the proof of Proposition \ref{pro2.4}, we can get the following result.
\begin{corollary}\label{cor2.5}
Let $X$ be a simply connected manifold without focal points. Then for each point $p \in X$, the map
\[
F: B_{p} \rightarrow \overline{X}, ~~v\mapsto F(v):=\left\{\begin{array}{ll}  \exp_{p}(\frac{v}{1-\parallel v\parallel})\qquad&\mbox{ if } \quad
\parallel v\parallel < 1,\\
\gamma_{v}(+\infty)\qquad  &\mbox{ if }\quad v \in S_{p}X,
\end{array}
\right.
\]
is a homeomorphism.
\end{corollary}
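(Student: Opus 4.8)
The plan is to follow the argument of Proposition~\ref{pro2.4} in structure: show that $F$ is a bijection, show that it is continuous, and then upgrade to a homeomorphism -- either by a compactness argument (since $B_p$ is compact and $\overline X$ is Hausdorff, being homeomorphic to a closed ball), or, exactly as in the proof of Proposition~\ref{pro2.4}, by proving directly that $F^{-1}$ is continuous via subsequences.

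\emph{Bijectivity.} The radial rescaling $\rho\colon\{v\in T_pX:\|v\|<1\}\to T_pX$, $v\mapsto v/(1-\|v\|)$, is a homeomorphism, with inverse $w\mapsto w/(1+\|w\|)$; and $\exp_p\colon T_pX\to X$ is a diffeomorphism because $X$, being a simply connected manifold without focal points, has no conjugate points. Hence $F|_{\{\|v\|<1\}}=\exp_p\circ\rho$ is a bijection onto $X$. On the boundary sphere $S_pX$ the map $F$ coincides with $\gamma_{+\infty}$, which is a bijection onto $X(\infty)$ by Proposition~\ref{pro2.4}. Since $\overline X=X\sqcup X(\infty)$, it follows that $F\colon B_p\to\overline X$ is a bijection.

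\emph{Continuity.} On the open ball this is immediate from continuity of $\rho$ and $\exp_p$, and on $S_pX$ itself it is Proposition~\ref{pro2.4}; the only thing left to verify is continuity at a boundary vector $v\in S_pX$ along a sequence $v_n\to v$ with $\|v_n\|<1$ (a general approaching sequence splits into an interior part and a boundary part, the latter handled by Proposition~\ref{pro2.4}). For $n$ large we may write $u_n=v_n/\|v_n\|\in S_pX$ and $t_n=\|v_n\|/(1-\|v_n\|)$, so that $F(v_n)=\gamma_{u_n}(t_n)$ with $u_n\to v$ (as $\|v\|=1$) and $t_n\to+\infty$ (as $\|v_n\|\to 1$). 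Proposition~\ref{pro1} (continuity to infinity) then gives $\gamma_{u_n}(t_n)\to\gamma_v(+\infty)=F(v)$ in the cone topology. Thus $F$ is continuous on $B_p$, and the homeomorphism claim follows as described above.

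The only genuinely non-routine point is this last step -- continuity of $F$ at the ideal boundary approached from inside the ball. The key maneuver is to rewrite $F(v_n)=\exp_p\!\big(v_n/(1-\|v_n\|)\big)$ as $\gamma_{u_n}(t_n)$ with parameter $t_n\to\infty$, so that Proposition~\ref{pro1} becomes applicable; the remainder is elementary manipulation of the exponential map in radial coordinates.
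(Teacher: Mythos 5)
Your proof is correct, and it is exactly the argument the paper intends: the paper gives no details for this corollary, merely asserting that it follows "by the same method" as Proposition~\ref{pro2.4}, and your write-up is a faithful instantiation of that method (bijectivity from the Cartan--Hadamard property plus Proposition~\ref{pro2.4}, continuity at the boundary via the reparametrization $F(v_n)=\gamma_{u_n}(t_n)$ and Proposition~\ref{pro1}, and inverse continuity by compactness or the subsequence trick). The reparametrization step you single out is indeed the only non-routine point, and you handle it correctly.
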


\begin{proposition}\label{pro2}
Let $X$ be a simply connected manifold without focal points. Then for any $v \in SX$, $R > 0$, and $\epsilon > 0$, there is a constant $L=L(v,\epsilon,R)$ such that for all $t > L$, $$B(\gamma_{v}(t),R)\subset C(v,\epsilon).$$
\end{proposition}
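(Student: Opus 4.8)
The plan is to fix a basepoint and compare angles measured at $p=\pi(v)$ with the condition defining the cone $C(v,\epsilon)$, using the continuity-to-infinity result already established. Recall $C(v,\epsilon)=\{a\in\overline X-\{p\}\mid \measuredangle(v,a)<\epsilon\}$, where $\measuredangle(v,a)=\measuredangle(v,\gamma'_{p,a}(0))$. So for a point $q\in B(\gamma_v(t),R)$ we must show that the initial direction at $p$ of the geodesic $\gamma_{p,q}$ makes an angle less than $\epsilon$ with $v$, once $t$ is large enough. Equivalently, writing $w_{t,q}\in S_pX$ for the unit vector with $\gamma_{w_{t,q}}(s)=q$ for the appropriate $s>0$, we need $\measuredangle(v,w_{t,q})<\epsilon$ uniformly over $q\in B(\gamma_v(t),R)$ for all large $t$.

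First I would argue by contradiction: suppose no such $L$ exists. Then there is a sequence $t_n\to+\infty$ and points $q_n\in B(\gamma_v(t_n),R)$ with $\measuredangle(v,w_{t_n,q_n})\ge\epsilon$, where $w_n:=w_{t_n,q_n}\in S_pX$. Since $S_pX$ is compact, pass to a subsequence so that $w_n\to w\in S_pX$, and note $\measuredangle(v,w)\ge\epsilon$, so in particular $w\ne v$. Let $s_n>0$ be such that $\gamma_{w_n}(s_n)=q_n$; by the triangle inequality applied to the geodesic triangle with vertices $p,\gamma_v(t_n),q_n$ we get $|s_n-t_n|\le d(\gamma_v(t_n),q_n)\le R$, hence $s_n\to+\infty$. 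Now I would invoke Proposition~\ref{pro1} (continuity to infinity): since $w_n\to w$ and $s_n\to+\infty$, we have $q_n=\gamma_{w_n}(s_n)\to\gamma_w(+\infty)$ in the cone topology. On the other hand, $q_n=\gamma_v(t_n)+O(R)$; more precisely $d(q_n,\gamma_v(t_n))\le R$ is bounded while $t_n\to\infty$, and by Proposition~\ref{pro1} again $\gamma_v(t_n)\to\gamma_v(+\infty)$. The bounded perturbation should not change the limit point at infinity: to see this cleanly, let $w_n'\in S_pX$ be the direction of $\gamma_{p,\gamma_v(t_n)}$, so $w_n'\to v$ by Proposition~\ref{pro1} (or just $w_n'=v$ if $\gamma_v$ is the geodesic from $p$ — indeed if $v\in S_pX$ then $w_n'=v$ exactly), while the same triangle-inequality/Lemma~\ref{OS} estimate as in the proof of Proposition~\ref{pro1} gives $\delta(w_n,w_n')\le 4R/(\text{something})\to$ small, forcing $w=v$, a contradiction with $\measuredangle(v,w)\ge\epsilon$.

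Let me make the last comparison precise, since it is the crux. The point $q_n$ lies within distance $R$ of $\gamma_v(t_n)$. Consider the two geodesic rays from $p$: one through $\gamma_v(t_n)$ (direction $v$, since $p=\pi(v)$ and $\gamma_v(0)=p$), reparametrized to have length $\ell_n:=d(p,\gamma_v(t_n))$; and one through $q_n$ (direction $w_n$), of length $s_n$. Since $d(\gamma_v(\ell_n), q_n) = d(\gamma_v(t_n),q_n)\le R$ — wait, $\ell_n=t_n$ here since $\gamma_v$ issues from $p$ — we have $d(\gamma_v(t_n),q_n)\le R$ and $|s_n-t_n|\le R$. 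By Lemma~\ref{OS}(1) the distance between the two rays $h_1=\gamma_v$ and $h_2=\gamma_{w_n}$ is increasing in the parameter, so for $u\in[0,\min(t_n,s_n)]$ we get $d(h_1(u),h_2(u))\le d(h_1(\min(t_n,s_n)),h_2(\min(t_n,s_n)))\le d(\gamma_v(t_n),q_n)+|t_n-s_n|\le 2R$. Evaluating at $u=1$ (fixed) bounds $d(\exp v,\exp w_n)\le 2R$, and since both have basepoint $p$ this gives $\delta(v,w_n)\le 2R$ — but this is only a fixed bound, not a vanishing one, so a direct estimate is too weak and the contradiction argument via Proposition~\ref{pro1} is genuinely needed. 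The main obstacle, then, is exactly this step: one cannot get a quantitative small-angle bound for finite $t$, so one must pass to the limit and use that $\gamma_v(t_n)$ and $q_n$ (with $d(q_n,\gamma_v(t_n))$ bounded) have the same limit in the cone topology, then apply injectivity of $v\mapsto\gamma_v(+\infty)$ on $S_pX$ (Proposition~\ref{pro2.4}) to conclude $w=v$. I would also remark that by the definition of the cone topology via truncated cones it suffices to produce, for the contradiction, a single truncated cone $TC(\gamma_v(+\infty),\epsilon/2,r)$ eventually containing all $q_n$ but with $w_n\not\to v$, which is precisely the obstruction the limiting argument removes.
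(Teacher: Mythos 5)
Your proposal is correct and follows essentially the same route as the paper's proof: argue by contradiction, extract a convergent subsequence of directions $w_n\to w$ at $p$ with $\measuredangle(v,w)\geq\epsilon$, use Lemma \ref{OS} to bound $d(\gamma_v(u),\gamma_{w_n}(u))\leq 2R$ on $[0,\min(t_n,s_n)]$, and then invoke Proposition \ref{pro1} together with the injectivity of $u\mapsto\gamma_u(+\infty)$ on $S_pX$ to force $w=v$, a contradiction. The only difference is presentational (your digression about the direct quantitative estimate being too weak), not mathematical.
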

\begin{proof}
We prove this proposition by contradiction. Suppose that there is a vector $v \in SX$, and constants $\epsilon >0$ and $R > 0$ such that $\forall L > 0$, we can always find $t > L$ satisfying $$B(\gamma_{v}(t),R) \setminus C(v,\epsilon)\neq \emptyset.$$
Let $p$ be the footpoint of $v$, i.e. $v\in S_pX$. Take $L_{n}=n$, then $\exists\ t_{n}> L_{n}=n$, and $p_{n}\in B(\gamma_{v}(t_{n}),R)$ such that $$\measuredangle(v,p_{n})\geq \epsilon_{0},~~|d(p,p_{n})-t_{n}| < R.$$ i.e. we have
$$t_{n}-R<d(p,p_{n})<t_{n}+R.$$
Let $\gamma_{n}$ be the geodesic ray from $p$ to $p_{n}$ and $v_{n}=\gamma'_{n}(0)$. Then by Lemma \ref{OS}, it's easy to see that
\begin{equation}\label{eq2}
d(\gamma_{v}(t),\gamma_{n}(t))\leq 2R,~~t\in[0,t_{n}].
\end{equation}

Passing a subsequence of $\{v_{n}\}^{\infty}_{n=1}$ if necessary, we can assume $v_{\infty}=\lim_{n\rightarrow \infty}v_{n}$. Obviously $v_{\infty}\in S_pX$.
Since $\measuredangle(v,v_{\infty})\geq \epsilon_{0}$, we know $v\neq v_{\infty}$. However, by the inequality \eqref{eq2} and Proposition \ref{pro1}, we have $\gamma_{v_{\infty}}(+\infty)=\gamma_{v}(+\infty)$. This contradicts to Lemma \ref{OS} since both $\gamma_{v_{\infty}}$ and $\gamma_{v}$ start from $p$.

From the argument in the above, we know that for any $v \in SX$, $R > 0$, and $\epsilon > 0$, there is a constant $L$ such that for all $t > L$, $B(\gamma_{v}(t),R)\subset C(v,\epsilon)$. We are done with the proof.
\end{proof}

In the following, we prove the duality property on Riemannian manifolds without focal points. Before stating the result, we should introduce the concepts of \emph{recurrent point} and  \emph{nonwandering point} with respect to a discrete subgroup $\Gamma$ of isometry group $Iso(X)$ of $X$,
where $X$ is a simply connected manifold.

\begin{Defi}
We say $v \in SX$ is a \emph{recurrent point} of the geodesic flow $\phi^{t}:SX\rightarrow SX$ w.r.t.~$\Gamma \subset Iso(X)$, if there exist a sequence $t_n\rightarrow +\infty$ and a sequence $\{\alpha_n\}_{n\in\mathbb{Z}^+} \in \Gamma$ such that
$$(d\alpha_{n}\circ\phi^{t_n})(v)\rightarrow v.$$
We say $v \in SX$ is a \emph{nonwandering point} of geodesic flow $\phi^{t}:SX\rightarrow SX$ w.r.t.~$\Gamma \subset Iso(X)$, if for any neighborhood $U$ of $v$ in $SX$, and any number $A>0$, there exists a number $t\geq A$ and $\alpha \in \Gamma$ such that
$$(d\alpha\circ\phi^{t})(U)\cap U \neq \emptyset.$$
We use $\Omega(\Gamma)$ to denote the set of all nonwandering points of geodesic flow $\phi^{t}:SX\rightarrow SX$.
\end{Defi}

This definition of $\Omega(\Gamma)$ was introduced by W. Ballmann in \cite{Ba2}.
Let $\mathfrak{C}:X \rightarrow M$ be the universal covering map. Then $d\mathfrak{C}(\Omega(\Gamma))$ is the nonwandering set
of the geodesic flow $\phi^{t}:SM \rightarrow SM$ in the common sense in dynamical systems (\cite{Eb1, KH, Pa}).
Moreover, it's easy to verify that for any $\alpha\in \Gamma$, the following diagrams
\begin{equation}\label{commutediagram}
\begin{array}[c]{ccc}
SX &\stackrel{\phi^{t}}{\longrightarrow}&SX\\
\downarrow\scriptstyle{d\alpha}&&\downarrow\scriptstyle{d\alpha}\\
SX &\stackrel{\phi^{t}}{\longrightarrow}&SX\\
\end{array},
\begin{array}[c]{ccc}
SX &\stackrel{\phi^{t}}{\longrightarrow}&SX\\
\downarrow\scriptstyle{d\mathfrak{C}}&&\downarrow\scriptstyle{d\mathfrak{C}}\\
SM &\stackrel{\phi^{t}}{\longrightarrow}&SM\\
\end{array}
\end{equation}
commute. Here we use $\phi^{t}$ to denote both the geodesic flow on $SX$ and the geodesic flow on $SM$.

\begin{proposition}[Duality property]\label{pro3}
Let $X$ be a simply connected manifold without focal points and $\Gamma$ be a discrete subgroup of the isometry group $Iso(X)$.
If $X/\Gamma = M$ is a compact manifold, then for all $v\in SX$, $\gamma_{v}(-\infty)$ and $\gamma_{v}(+\infty)$ are \emph{$\Gamma$-dual}, i.e., $\exists\ \{\alpha_{n}\}^{+\infty}_{n=1}\subset \Gamma$ such that for any $p\in X$
\begin{equation}\label{duality}
\alpha^{-1}_{n}(p)\rightarrow \gamma_{v}(+\infty),~\&~\alpha_{n}(p)\rightarrow \gamma_{v}(-\infty).
\end{equation}
\end{proposition}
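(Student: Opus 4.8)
The plan is to use the compactness of $M = X/\Gamma$ to extract a recurrent vector near $v$, and then upgrade recurrence to the duality condition by a standard argument going back to Eberlein. First I would recall the key mechanism: since $M$ is compact, the geodesic flow on $SM$ is nonwandering, and in fact every vector in $SM$ is nonwandering; lifting this to $SX$, one shows that the set $\Omega(\Gamma)$ of $\Gamma$-nonwandering vectors is all of $SX$. Concretely, fix $v \in SX$ with footpoint $p$, and pick a fundamental domain. For the sequence $t_n = n$, the points $\phi^{t_n}(v)$ project into the compact manifold $SM$, so there are $\alpha_n \in \Gamma$ such that $d\alpha_n(\phi^{t_n}(v))$ lies in a fixed compact fundamental region of $SX$; passing to a subsequence, $d\alpha_n(\phi^{t_n}(v)) \to w$ for some $w \in SX$. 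This is the ``pseudo-recurrence'' we will exploit.

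Next I would translate this convergence into convergence of endpoints at infinity via Proposition~\ref{pro1} (continuity to infinity) and the isometry-equivariance of the boundary action. Write $q_n = \gamma_v(t_n) = \pi(\phi^{t_n}(v))$, so $\alpha_n(q_n) = \pi(d\alpha_n \phi^{t_n}(v)) \to \pi(w) =: q$. Since $\gamma_v(t_n) \to \gamma_v(+\infty)$ in $\overline X$ by Proposition~\ref{pro1}, and since $\alpha_n$ acts as a homeomorphism of $\overline X$, I want to conclude $\alpha_n^{-1}(q_0) \to \gamma_v(+\infty)$ for a fixed basepoint $q_0$. The cleanest way: using Lemma~\ref{Ka} and the bounded-distance estimates as in the proof of Proposition~\ref{pro1}, the unit vector $u_n$ at $q$ pointing toward $\alpha_n(q_n)$ converges to $w$'s direction — equivalently, $d\alpha_n(\phi^{t_n}(v)) \to w$ already gives that the geodesic ray from $\alpha_n(p)$ through $\alpha_n(q_n)$ converges (as a ray) to $\gamma_w$. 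Applying $\alpha_n^{-1}$, which is an isometry, the geodesic ray from $p$ through $q_n = \gamma_v(t_n)$ — which is just $\gamma_v|_{[0,t_n]}$ — maps to a ray based at $\alpha_n^{-1}(p)$, and the reversed picture shows $\alpha_n^{-1}$ pushes a neighborhood of $\gamma_w$ out to $\gamma_v(+\infty)$. More carefully: from $d\alpha_n \phi^{t_n}(v) \to w$ we get $\phi^{t_n}(v) \to d\alpha_n^{-1}(w)$ up to the error measured by how far $\alpha_n^{-1}$ moves points; using that $\alpha_n^{-1}(q) $ is within bounded distance of $p$ only after renormalizing, one deduces $\alpha_n^{-1}(q) \to \gamma_v(+\infty)$ in the cone topology by Proposition~\ref{pro1} applied to the rays from $p$ of length $d(p,\alpha_n^{-1}(q)) \to \infty$ with directions converging to $v$. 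Running the identical argument with $t_n \to -\infty$ replaced appropriately — or simply noting $\phi^{-t_n}(d\alpha_n^{-1}\cdot) $ and that $\gamma_v(-t_n) \to \gamma_v(-\infty)$ — yields $\alpha_n(p) \to \gamma_v(-\infty)$, giving both halves of \eqref{duality}. Finally, the statement ``for any $p \in X$'' (not just the footpoint) follows because any two points of $X$ stay at bounded distance under the single isometry $\alpha_n$, and bounded-distance rays have the same endpoint at infinity, so the limit in the cone topology is independent of the basepoint.

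The main obstacle I expect is the second step: carefully converting the convergence $d\alpha_n(\phi^{t_n}(v)) \to w$ in $SX$ into the two boundary convergences $\alpha_n^{-1}(p) \to \gamma_v(+\infty)$ and $\alpha_n(p) \to \gamma_v(-\infty)$, keeping track of which isometry acts on which object and verifying that the relevant geodesic rays have lengths tending to infinity with directions converging (so that Proposition~\ref{pro1} genuinely applies). The no-focal-points geometry enters only mildly here — through Lemma~\ref{OS} and Lemma~\ref{Ka} to control $d(\gamma_v(t), \gamma_n(t))$ along the approximating rays, exactly as in the proof of Proposition~\ref{pro1} — but the bookkeeping of the group action is where an error could hide, and it is worth writing out the estimate $|d(p, \alpha_n^{-1}(q)) - t_n|$ is bounded explicitly to confirm the renormalized directions converge to $v$.
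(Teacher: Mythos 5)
There is a genuine gap, and it sits exactly where you predicted the bookkeeping would be delicate. Your ``concrete'' mechanism is only that $d\alpha_{n}(\phi^{t_{n}}(v))$ returns to a fixed compact fundamental region and hence converges (along a subsequence) to \emph{some} $w\in SX$. That weak statement does give the forward half: writing $q=\pi(w)$, you get $d\bigl(\alpha_{n}^{-1}(q),\gamma_{v}(t_{n})\bigr)=d\bigl(q,\alpha_{n}\gamma_{v}(t_{n})\bigr)\to 0$, so $\alpha_{n}^{-1}(q)\to\gamma_{v}(+\infty)$ by Proposition \ref{pro1}, and basepoint-independence finishes it. But it does \emph{not} give the backward half. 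Running your own reversal, the geodesic $t\mapsto\alpha_{n}(\gamma_{v}(t_{n}-t))$ has initial vector $-d\alpha_{n}(\phi^{t_{n}}(v))\to -w$ and reaches $\alpha_{n}(p)$ at time $t_{n}$, so Proposition \ref{pro1} yields $\alpha_{n}(p)\to\gamma_{-w}(+\infty)=\gamma_{w}(-\infty)$ --- not $\gamma_{v}(-\infty)$. Nothing in ``return to a compact set'' forces $w=v$, and ``running the identical argument with $t_{n}\to-\infty$'' produces a \emph{different} sequence of isometries, whereas duality requires the same $\alpha_{n}$ to realize both limits simultaneously.

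The missing ingredient is precisely the nonwandering property you state at the outset but then never use: one must first prove $\Omega(\Gamma)=SX$, and this does not follow from compactness alone (a gradient-like flow on a compact manifold has wandering points). The paper's proof establishes it via the Liouville measure: if some $v$ were wandering, the sets $\phi^{nA}(V)$ of its projected neighborhood would be pairwise disjoint with equal positive Liouville measure, contradicting finiteness of the measure on the compact $SM$. Once $\Omega(\Gamma)=SX$ is known, one gets sequences $v_{n}\to v$, $t_{n}\to+\infty$, $\alpha_{n}\in\Gamma$ with $d\alpha_{n}\circ\phi^{t_{n}}(v_{n})\to v$ (note: $v$ itself as the limit, and $v_{n}$ near $v$ rather than $v$ exactly, since pointwise recurrence only holds almost everywhere). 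With that strengthened input your reversal argument goes through verbatim and produces $\gamma_{v}(-\infty)$ as the backward limit; the rest of your outline (continuity to infinity, basepoint-independence via Proposition \ref{pro2}) matches the paper.
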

\begin{proof}
First we show that $\Omega(\Gamma)=SX$. We prove this by contradiction. Assume that there exists a vector $v \in SX$ such that $v \notin \Omega(\Gamma)$.
Then we can find a neighborhood $U$ of $v$ in $SX$ with $\mbox{diag}(U) < \mbox{Inj}(M)$, where $\mbox{Inj}(M)$ is the injective radius of $M$,
and a constant $A > 0$ such that for any $t \geq A$ and $\alpha \in \Gamma$,
$(d\alpha\circ\phi^{t})(U)\cap U = \emptyset$. Thus for any $\alpha \in \Gamma$ and $t_{2}> t_{1}\geq A$ satisfying $t_{2} - t_{1}\geq A$,
we have $\phi^{t_{2}-t_{1}}(U)\cap d\alpha(U) = \emptyset$. Using the first commutative diagram in (\ref{commutediagram}), we get
\begin{equation}\label{eq2.6}
\phi^{t_{2}}(U)\cap (d\alpha \circ \phi^{t_{1}})(U) = \emptyset, ~~\forall \alpha \in \Gamma.
\end{equation}
Together with the second diagram in (\ref{commutediagram}), we know that
\begin{equation}\label{eq2.61}
\phi^{t_{2}}(d\mathfrak{C}(U)) \cap \phi^{t_{1}}(d\mathfrak{C}(U))= \emptyset,  ~~\forall t_{2}> t_{1}\geq A  ~~\mbox{with} ~~t_{2} - t_{1}\geq A.
\end{equation}
Let $V:=d\mathfrak{C}(U)\subset SM$.
Since $\mathfrak{C}$ is an $\mbox{Inj}(M)$-isometry (see for example \cite{Pa} remark 3.37) and the geodesic flow preserves the Liouville measure $\lambda$ on $SM$,
we know that for any $t\in \mathbb{R}$,
\begin{equation}\label{eq2.62}
\lambda(\phi^{t}(V))=\lambda(V)>0.
\end{equation}
(\ref{eq2.61}) and (\ref{eq2.62}) imply that
$\phi^{nA}(V) ~(n=1,2,3,...)$ are pairwisely disjoint sets with equal positive Liouville measure in $SM$.
While $SM$ is compact, $\lambda(SM)=1 < +\infty$, this is impossible. From the argument in the above, we can conclude that $\Omega(\Gamma)=SX$.

Now for all $v\in SX$, we can find sequences $\{\alpha_{n}\}^{+\infty}_{n=1}\subset \Gamma$, $v_{n}\rightarrow v$ and $t_{n}\rightarrow +\infty$ such that
$$d\alpha_{n}\circ \phi^{t_{n}}(v_{n})\rightarrow v.$$
This implies that $\alpha_{n}\gamma_{v_{n}}(t_{n})\rightarrow \gamma_{v}(0)$, and then
$$d(\alpha^{-1}_{n}\gamma_{v}(0),\gamma_{v_{n}}(t_{n}))\rightarrow 0.$$
Therefore by the property of continuity to infinity (Proposition \ref{pro1}), we have
$$\lim_{n\rightarrow\infty}\alpha^{-1}_{n}\gamma_{v}(0)= \lim_{n\rightarrow \infty}\gamma_{v_{n}}(t_{n})=\gamma_{v}(+\infty).$$
We have done the proof of the first limit.

For the second one, let $\gamma_{n}(t)= \alpha_{n}\gamma_{v_{n}}(t_{n}-t)$,
then
\begin{eqnarray*}
\gamma'_{n}(0)
&=& \frac{d}{dt}\mid_{t=0}\alpha_{n}\gamma_{v_{n}}(t_{n}-t)\\
&=& d\alpha_{n}(-\phi^{t_{n}}(v_{n}))\rightarrow -v.
\end{eqnarray*}
By Proposition \ref{pro1}, we have $\gamma_{n}(t_{n})\rightarrow \gamma_{-v}(+\infty)=\gamma_{v}(-\infty)$.
Therefore, $\gamma_{n}(t_{n})=\alpha_{n}\gamma_{v_{n}}(t_{n}-t_{n})=\alpha_{n}\gamma_{v_{n}}(0)$, which implies
$$\alpha_{n}\gamma_{v}(0)\rightarrow \gamma_{v}(-\infty).$$

Take $p=\gamma_v(0)$, then (\ref{duality}) is valid.
Proposition \ref{pro2} implies that if (\ref{duality}) holds for one point,
then it holds for all points in $X$. This complete the proof of Proposition \ref{pro3}.
\end{proof}

The following proposition was first discovered by J. Watkins in \cite{Wat}. Here we give a new proof by using a different method. This proposition can be used as a criterion to distinguish a vector of higher rank  on a simply connected manifold without focal points.

\begin{proposition}\label{pro4}
Suppose $X$ is a simply connected manifold without focal points and $v\in SX$. If there is a constant $c>0$ such that for all $k \in \mathbb{Z}^+$, we can always find a pair of points in the cones:
$$p_{k}\in C(-v,\frac{1}{k}), ~~q_{k}\in C(v,\frac{1}{k}),$$ with $d(\gamma_{v}(0),\gamma_{p_{k},q_{k}})\geq c$, then $\text{rank}(v)\geq 2$.
\end{proposition}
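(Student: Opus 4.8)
The plan is to produce, from the hypothesis, a nontrivial parallel perpendicular Jacobi field along $\gamma_v$, which by definition forces $\text{rank}(v)\ge 2$. The geometric content is that the hypothesis produces, for each $k$, a geodesic $\sigma_k=\gamma_{p_k,q_k}$ that passes within a fixed distance $c$ of $\gamma_v(0)$ while its two endpoints at infinity converge to $\gamma_v(-\infty)$ and $\gamma_v(+\infty)$ respectively. First I would parametrize: let $w_k\in SX$ be the (unit) velocity of $\sigma_k$ at the point $r_k$ on $\sigma_k$ nearest to $\gamma_v(0)$, oriented so that $\gamma_{w_k}(+\infty)=q_k\to\gamma_v(+\infty)$. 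Since $d(\gamma_v(0),r_k)\ge c$ but we also need an upper bound: the footpoints $r_k$ lie in a bounded region, which one extracts from the fact that $p_k\in C(-v,1/k)$ and $q_k\in C(v,1/k)$ force the geodesic $\sigma_k$ to run nearly ``through'' a bounded neighborhood of $\gamma_v(0)$ (using Proposition \ref{pro2} to control how cones behave, together with Lemma \ref{OS} and Lemma \ref{Ka} to bound $d(\gamma_v(0),r_k)$ from above). Passing to a subsequence, $w_k\to w$ for some $w\in SX$ with footpoint at distance in $[c, \text{(bound)}]$ from $\gamma_v(0)$, and by Proposition \ref{pro1} (continuity to infinity) applied to both forward and backward rays, $\gamma_w(+\infty)=\gamma_v(+\infty)$ and $\gamma_w(-\infty)=\gamma_v(-\infty)$.

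Next I would invoke the flat strip theorem (valid without focal points, as noted in the introduction): two distinct geodesics $\gamma_v$ and $\gamma_w$ that are biasymptotic — i.e. agree at both endpoints in $X(\infty)$ — bound a flat strip, i.e. an isometrically embedded flat strip $\mathbb{R}\times[0,a]$ in $X$ with $a=d(\gamma_v(0),\gamma_w)>0$ (here $a\ge c>0$ by construction, so the strip is genuinely two-dimensional and $\gamma_v\ne\gamma_w$). Inside this flat strip, the orthogonal vector field that translates $\gamma_v$ to $\gamma_w$ restricts to a parallel Jacobi field $J$ along $\gamma_v$ which is perpendicular to $\gamma'_v$ and has constant nonzero norm. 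Hence $J$ is a parallel perpendicular Jacobi field along $\gamma_v$, so $\text{rank}(v)\ge 2$.

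The main obstacle will be the compactness step: establishing a \emph{uniform upper bound} on $d(\gamma_v(0),r_k)$, equivalently keeping the footpoints of $w_k$ in a bounded set so that a convergent subsequence exists. The lower bound $c$ is handed to us, but a priori the nearest point $r_k$ on $\sigma_k$ could drift to infinity along $\gamma_v$. To rule this out I would argue that if $d(\gamma_v(0),r_k)\to\infty$, then by Proposition \ref{pro2} the cone condition at one endpoint (say $q_k\in C(v,1/k)$) together with the angle estimates at the footpoint would force the \emph{other} endpoint $p_k$ to lie in a cone around $v$ as well rather than around $-v$ — contradicting $p_k\in C(-v,1/k)$ — because a geodesic that stays far out along $\gamma_v$ and bends only slightly cannot have its two ends going to antipodal boundary points. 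Making this precise requires a careful triangle-comparison-free argument using only Lemmas \ref{OS} and \ref{Ka} (monotonicity and the ``no-maximum''/broken-geodesic estimates), since we have no curvature bound; that estimate is the technical heart of the proof. Once boundedness is in hand, the rest is continuity to infinity plus the flat strip theorem.
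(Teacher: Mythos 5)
There is a genuine gap, and it sits exactly where you located the ``technical heart'' of your argument: the uniform upper bound on $d(\gamma_v(0),r_k)$ that you hope to establish is simply false, and the contradiction you propose to derive from its failure does not exist. Consider the Euclidean plane with $\gamma_v$ the $x$-axis, $p_k=(-k^3,k)$ and $q_k=(k^3,k)$: then $p_k\in C(-v,\frac1k)$, $q_k\in C(v,\frac1k)$, and $d(\gamma_v(0),\gamma_{p_k,q_k})=k\to\infty$, so the hypothesis of the proposition is satisfied while the nearest points $r_k$ escape to infinity \emph{perpendicularly} to $\gamma_v$ (not along it, as your sketch envisions). This is precisely the higher-rank configuration the proposition is designed to detect, so no amount of work with Lemmas \ref{OS} and \ref{Ka} will turn unboundedness of $d(\gamma_v(0),\gamma_{p_k,q_k})$ into a contradiction with $p_k\in C(-v,\frac1k)$. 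The paper therefore splits into two cases: when $d(\gamma_v(0),\gamma_k)$ stays bounded, it runs essentially your limit-plus-flat-strip argument (Case I); when it is unbounded (Case II), it uses an interpolation device you are missing. Namely, it joins the foot $p_k'$ of $p_k$ on $\gamma_v$ to $p_k$ by a curve $c_k(s)$, and likewise $q_k'$ to $q_k$ by $b_k(s)$, considers the family of connecting geodesics $\gamma_{k,s}$ from $c_k(s)$ to $b_k(s)$, and applies the intermediate value theorem to find $s_k$ with $d(\gamma_v(0),\gamma_{k,s_k}(0))=c$ exactly. Those footpoints lie on the sphere of radius $c$, hence in a compact set, and the limit of $\gamma'_{k,s_k}(0)$ gives the desired biasymptotic geodesic distinct from $\gamma_v$. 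Your proof needs this (or an equivalent) second case; without it the argument collapses precisely in the unbounded regime.

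A secondary omission: you take for granted that the endpoints at infinity of $\sigma_k$ converge to $\gamma_v(\mp\infty)$. Since $C(\pm v,\frac1k)$ contains points of $X$ at finite distance, one must first rule out that $p_k$ or $q_k$ stays in a bounded set; the paper devotes the first half of the proof to showing $d(\gamma_v(0),p_k)\to\infty$ and $d(\gamma_v(0),q_k)\to\infty$ (using Lemma \ref{Ka} and Proposition \ref{pro1} to contradict $d(\gamma_v(0),\gamma_{p_k,q_k})\geq c$ otherwise). This step is more routine than the Case II issue, but it cannot be skipped.
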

\begin{proof}
First of all, we will show that under the assumption of this proposition, both of the following equalities hold:
\begin{equation}\label{eq3}
\lim_{k\rightarrow +\infty}d(\gamma_{v}(0),p_{k})=+\infty,
\end{equation}
\begin{equation}\label{eq4}
\lim_{k\rightarrow +\infty}d(\gamma_{v}(0),q_{k})=+\infty.
\end{equation}

As a first step to prove the above result, we show that:
\begin{itemize}
\item{} ~~At least one of the above equalities holds. i.e. we can not find a subsequence $\{i_k\}^{\infty}_{k=1}\subset \mathbb{Z}^+$ which makes both \eqref{eq3} and \eqref{eq4} false.
\end{itemize}

Suppose the claim is not true, i.e., there exists a positive number $A$, and subsequences $\{p_{i_{k}}\}$ and $\{q_{i_{k}}\}$ such that
$$\max\{d(\gamma_{v}(0),p_{i_{k}}), d(\gamma_{v}(0),q_{i_{k}})\}\leq A, ~~k=1,2,3,\cdots.$$
By passing a subsequence if necessary, this implies that $p_{i_{k}}$ converges to some point $\gamma_{v}(-t_{1})$, $t_{1}\in [0,A]$, while $q_{i_{k}}$ converges to some point $\gamma_{v}(t_{2})$, $t_{2}\in [0,A]$.
Let $\epsilon = \frac{c}{8}$, then $\exists\ K_{1} \in \mathbb{Z}^+$ such that for all $k \geq K_{1}$, we have
$$\max\{d(\gamma_{v}(-t_{1}),p_{i_{k}}), d(\gamma_{v}(t_{2}),q_{i_{k}})\}\leq \frac{\epsilon}{2} = \frac{c}{16}.$$
Moreover, by the continuity of the distance function $d(x,y)$, there is a $K_{2} \in \mathbb{Z}^+$ such that for all $k \geq K_{2}$,
$$|d(p_{i_{k}},q_{i_{k}})-d(\gamma_{v}(-t_{1}),\gamma_{v}(t_{2}))|\leq \epsilon =\frac{c}{8}.$$
This implies that when $k>\max\{K_1, K_2\}$, we have
$$d(q_{i_{k}},\gamma_{v}(d(p_{i_{k}},q_{i_{k}})-t_{1}))\leq \frac{c}{4}.$$

We use $\gamma_{i_{k}}$ to denote the geodesic from $p_{i_{k}}$ to $q_{i_{k}}$ satisfying $\gamma_{i_{k}}(-t_{1})=p_{i_{k}}$, then by Lemma \ref{Ka}, we have
$$d(\gamma_{v}(t),\gamma_{i_{k}}(t))\leq \frac{c}{4}+\frac{c}{8}=\frac{3c}{8},~~t \in [-t_{1},d(p_{i_{k}},q_{i_{k}})-t_{1}].$$
Specifically, let $t=0$, we get a contradiction to the assumption  $d(\gamma_{v}(0),\gamma_{p_{k},q_{k}})\geq c$. This proves the claim, i.e. at least one of the equalities \eqref{eq3} and \eqref{eq4} holds.

The second step is to prove that
\begin{itemize}
\item There doesn't exist a subsequence $\{i_k\}^{\infty}_{k=1}$ such that
$$d(\gamma_{v}(0),p_{i_{k}})\leq A, ~~d(\gamma_{v}(0),q_{i_{k}})\rightarrow +\infty,$$
for some $A >0$.
\end{itemize}
This will lead to the equalities \eqref{eq3}.

Obviously, $d(\gamma_{v}(0),q_{i_{k}})\rightarrow +\infty$ and $q_{i_{k}}\in C(\gamma'_{v}(0),\frac{1}{i_{k}})$ imply $$\lim_{k\rightarrow +\infty} q_{i_{k}} = \gamma_{v}(+\infty).$$
Since $\lim_{k\rightarrow +\infty}p_{i_{k}}=\gamma_{v}(-t_{1})$ for some $t_1\in [0,A]$, by choosing a subsequence if necessary,
we get a limit vector $v_{\infty}:=\lim_{k\rightarrow +\infty}\gamma'_{i_{k}}(-t_{1})\in S_{\gamma_{v}(-t_{1})}X$. By Proposition \ref{pro1}, we have
$$\gamma_{v}(+\infty)=\lim_{k\rightarrow +\infty}q_{i_{k}}=\lim_{k\rightarrow +\infty}\gamma_{i_{k}}(t_{i_{k}})=\gamma_{v_{\infty}}(+\infty).$$
Then Lemma \ref{OS} implies that $v_{\infty}=\gamma'_{v}(-t_{1})$, which leads to $\gamma'_{i_{k}}(-t_{1})\rightarrow \gamma'_{v}(-t_{1})$.
So for $\epsilon = \frac{c}{2}$, there is a constant $K_{3} \in \mathbb{Z}^+$ such that $\forall k \geq K_{3}$,
$$d(\gamma_{i_{k}}(t),\gamma_{v}(t))\leq \epsilon = \frac{c}{2}, ~~t \in [-t_{1},t_{1}].$$
Take $t=0$, we have
$$d(\gamma_{i_{k}}(0),\gamma_{v}(0))\leq \frac{c}{2} < c,$$
which also contradicts to the assumption  $d(\gamma_{v}(0),\gamma_{p_{k},q_{k}})\geq c$. We have done with the proof of equality \eqref{eq3}.

The equality \eqref{eq4} can be proved in a similar way, so we omit the proof.  In summary, both of \eqref{eq3} and \eqref{eq4} hold, thus
$$\lim_{k\rightarrow +\infty}p_{k}=\gamma_{v}(-\infty), ~~\lim_{k\rightarrow +\infty}q_{k}=\gamma_{v}(+\infty).$$

In order to prove Proposition \ref{pro4}, we consider the following two cases:

\vspace{1ex}
$\bullet$ \textbf{Case I}~~there exists a positive constant $R$ such that
$$c \leq d(\gamma_{v}(0),\gamma_{k})\leq R, ~~k \in \mathbb{Z}^+.$$
where $\gamma_{k}$ is the geodesic connecting $p_{k}$ and $q_{k}$.

We choose the parametrization of $\gamma_{k}$ such that $d(\gamma_{v}(0),\gamma_{k}(0))=d(\gamma_{v}(0),\gamma_{k})$. Passing to a subsequence if necessary, we can assume $v'=\lim_{k\rightarrow +\infty}\gamma'_{k}(0)$. Then $v'\neq \gamma'_{v}(t)$ for all $t \in \mathbb{R}$, and by Proposition \ref{pro1}, we have
$$\gamma_{v'}(-\infty)=\gamma_{v}(-\infty), ~~\gamma_{v'}(+\infty)=\gamma_{v}(+\infty).$$
This leads to the existence of a flat strip bounded by $\gamma_v$ and $\gamma_{v'}$ (cf.~\cite{Os}). Thus $\text{rank}(v)\geq 2$.

\vspace{1ex}
$\bullet$ \textbf{Case II}~~$d(\gamma_{v}(0),\gamma_{k})$ is unbounded for $k \in \mathbb{Z}^+$.

For each $k\in \mathbb{Z}^+$, choose $p'_{k}, q'_{k}\in \gamma_{v}$ such that
$$d(p_{k},p'_{k})=d(p_{k},\gamma_{v}), ~~d(q_{k},q'_{k})=d(q_{k},\gamma_{v}).$$
Let $b_{k}:[0,1]\rightarrow X$ be a smooth curve connecting $q'_{k}$ and $q_{k}$ with $b_{k}(0)=q'_{k}$ and $b_{k}(1)=q_{k}$, and $\measuredangle_{p}(\gamma_{v}(+\infty),b_{k}(s))$ is an increasing function of $s$, where $p=\pi(v)$. Similarly, let $c_{k}:[0,1]\rightarrow X$ be a smooth curve connecting $p'_{k}$ and $p_{k}$ with $c_{k}(0)=p'_{k}$ and $c_{k}(1)=p_{k}$, and $\measuredangle_{p}(\gamma_{v}(-\infty),c_{k}(s))$ is an increasing function of $s$. Let $\gamma_{k,s}$ be the geodesic connecting $c_{k}(s)$ and $b_{k}(s)$ for each $s \in [0,1]$, with the parametrization $\gamma_{k,0}(0)=\gamma_{v}(0)$ and $\gamma_{k,s}(0)$ is a smooth curve of $s$.

Since $d(p,\gamma_{k}(0))=d(p,\gamma_{k,1}(0))\geq c$ and $d(p,p)=d(p,\gamma_{k,0}(0))=0$, then there exists $s_{k}\in (0,1]$ such that $d(p,\gamma_{k,s_{k}}(0))=c >0$. Without loss of generality we can assume that $\lim_{k\rightarrow +\infty}\gamma'_{k,s_{k}}(0)= v_{\infty}\in SX$. Obviously $d(p,\pi(v_{\infty}))=c>0$.

Using the same argument in the beginning of the proof of this proposition, we have
$$d(b_{k}(s_{k}),p)\rightarrow +\infty, ~~d(c_{k}(s_{k}),p)\rightarrow +\infty.$$
Then by the fact
$$\max\{\measuredangle_{p}(\gamma_{v}(+\infty),b_{k}(s_{k})),\ \measuredangle_{p}(\gamma_{v}(-\infty),c_{k}(s_{k}))\}\leq \frac{1}{k}.$$
we get
$$\lim_{k\rightarrow +\infty}b_{k}(s_{k})=\gamma_{v}(+\infty),~~\lim_{k\rightarrow +\infty}c_{k}(s_{k})=\gamma_{v}(-\infty).$$
By Proposition \ref{pro1}, we know that
$$\gamma_{v_{\infty}}(+\infty)=\lim_{k\rightarrow +\infty}\gamma_{k,s_{k}}(+\infty)=\lim_{k\rightarrow +\infty}b_{k}(s_{k})=\gamma_{v}(+\infty),$$
$$\gamma_{v_{\infty}}(-\infty)=\lim_{k\rightarrow +\infty}\gamma_{k,s_{k}}(-\infty)=\lim_{k\rightarrow +\infty}c_{k}(s_{k})=\gamma_{v}(-\infty).$$
Since $X$ has no focal points, it's impossible that both of $b_{k}(s_{k})$ and $c_{k}(s_{k})$ are on the geodesic $\gamma_{v}$. Then for all $t\in \mathbb{R}$,  $v_{\infty}\neq \gamma'_{v}(t)$. This implies that $\gamma_v$ and $\gamma_{v_{\infty}}$ bound a flat strip, so $\text{rank}(v)\geq 2$. We complete the proof of this proposition.
\end{proof}

The following two propositions are corollaries of Proposition \ref{pro4}.
\begin{proposition}\label{pro5}
Let $X$ be a simply connected manifold without focal points and $\text{rank}(v)=1$,
where $v\in SX$ is a unit vector. Then there is an $\epsilon > 0$ such that, for any $\xi \in C_{\epsilon}(-v)$ and $\eta \in C_{\epsilon}(v)$,
there is a geodesic $\gamma_{\xi,\eta}$ connecting $\xi$ and $\eta$, i.e.,
$\gamma_{\xi,\eta}(-\infty)=\xi$ and $\gamma_{\xi,\eta}(+\infty)=\eta$.
\end{proposition}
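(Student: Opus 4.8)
The plan is to derive Proposition \ref{pro5} from Proposition \ref{pro4} by contradiction, exploiting the fact that rank$(v)=1$. Suppose the conclusion fails: then for every $k\in\mathbb{Z}^+$ (taking $\epsilon=\frac1k$), there are points $\xi_k\in C_{1/k}(-v)$ and $\eta_k\in C_{1/k}(v)$ such that no geodesic joins $\xi_k$ to $\eta_k$. First I would fix a footpoint $p=\pi(v)$ and, for each $k$, produce an actual geodesic segment whose endpoints sit inside the two cones and whose distance to $\gamma_v(0)$ is forced to grow; the idea is to connect an interior approximant of $\xi_k$ to an interior approximant of $\eta_k$. Concretely, choose $p_k\in C(-v,\frac1k)\cap X$ and $q_k\in C(v,\frac1k)\cap X$ lying on the geodesic rays $\gamma_{p,\xi_k}$ and $\gamma_{p,\eta_k}$, very far out (say at distance $k$ from $p$), and let $\gamma_{p_k,q_k}$ be the connecting geodesic in $X$ (which exists since $X$ is simply connected without conjugate points). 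The key claim to establish is that $d(\gamma_v(0),\gamma_{p_k,q_k})\to\infty$; granting this, for any fixed $c>0$ we have $d(\gamma_v(0),\gamma_{p_k,q_k})\ge c$ for all large $k$, and then (after relabeling the subsequence and reinserting smaller cones) Proposition \ref{pro4} yields rank$(v)\ge 2$, contradicting the hypothesis.

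The heart of the argument is the distance-blowup claim, and here is where I expect the main obstacle to lie. If instead $d(\gamma_v(0),\gamma_{p_k,q_k})$ stayed bounded along a subsequence, I would argue as in Case I of the proof of Proposition \ref{pro4}: parametrize $\gamma_{p_k,q_k}$ so that its closest point to $\gamma_v(0)$ is at parameter $0$, pass to a limit direction $v'=\lim\gamma'_{p_k,q_k}(0)$, and use Proposition \ref{pro1} (continuity to infinity) together with $p_k\to\gamma_v(-\infty)$... wait — one must first know $p_k\to\gamma_v(-\infty)$ and $q_k\to\gamma_v(+\infty)$, which follows because $p_k$ is at distance $k$ from $p$ inside the shrinking cone $C(-v,\frac1k)$, hence in every truncated cone $TC(-v,\epsilon,r)$ eventually, so by the definition of the cone topology $p_k\to\gamma_{-v}(+\infty)=\gamma_v(-\infty)$, and similarly for $q_k$. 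Then the limiting geodesic $\gamma_{v'}$ satisfies $\gamma_{v'}(\pm\infty)=\gamma_v(\pm\infty)$. If the minimal distance stays bounded below by some $c>0$ and above, then $v'\ne\gamma_v'(t)$ for all $t$, so by the flat strip lemma (O'Sullivan \cite{Os}) $\gamma_v$ bounds a flat strip, giving rank $\ge 2$ — contradiction. So in fact, under the rank $1$ hypothesis, the minimal distance is forced either to be unbounded (giving divergence along some subsequence, which is what we want) or to shrink to $0$.

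If $d(\gamma_v(0),\gamma_{p_k,q_k})\to 0$ along a subsequence, then the connecting geodesics pass nearly through $\gamma_v(0)$; combining this with $p_k\to\gamma_v(-\infty)$ and $q_k\to\gamma_v(+\infty)$ and Proposition \ref{pro1}, the direction of $\gamma_{p_k,q_k}$ at its closest point converges to $\pm\gamma_v'(0)$, so $\gamma_{p_k,q_k}$ converges to $\gamma_v$ itself; but then for large $k$ the point $\eta_k$ lies in an arbitrarily small cone around $v$ and is joined to $\xi_k$ by the nearby geodesic $\gamma_{p_k,q_k}$, and passing to the limit in the ideal boundary this actually \emph{produces} a connecting geodesic between points of $C_\epsilon(-v)$ and $C_\epsilon(v)$ — I would need to check this carefully, but it contradicts the standing assumption that no such connecting geodesic exists for these $\xi_k,\eta_k$, or more robustly, one fixes a definite small cone and shows the existence of the connecting geodesic for all points in it by a compactness/limiting argument. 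Thus all cases either contradict rank $1$ or contradict the failure of the proposition, and we conclude there exists $\epsilon>0$ with the desired property. The one delicate point deserving the most care is the interchange of limits at the ideal boundary — ensuring that a limit of connecting geodesics connecting interior approximants really connects the boundary points $\xi,\eta$ — which I would handle using Proposition \ref{pro1} and Lemma \ref{Ka} to control the segments on compact parameter intervals.
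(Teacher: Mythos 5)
Your instinct --- that Proposition \ref{pro4} is the engine, and that the ideal points must be reached as limits of connecting geodesics between interior approximants --- is correct, but the logical architecture of your argument does not work, and the ``key claim'' on which it rests is actually false. You assume the proposition fails, pick for each $k$ a non-connectable pair $\xi_k\in C_{1/k}(-v)$, $\eta_k\in C_{1/k}(v)$, take interior points $p_k,q_k$ at distance $k$ on the rays toward $\xi_k,\eta_k$, and try to show $d(\gamma_v(0),\gamma_{p_k,q_k})\to\infty$ so as to invoke Proposition \ref{pro4}. But the contrapositive of Proposition \ref{pro4}, applied under the standing hypothesis $\mathrm{rank}(v)=1$, says precisely that for every $c>0$ there is a $k_0$ with $d(\gamma_v(0),\gamma_{p,q})<c$ for all interior $p\in C(-v,\frac{1}{k_0})$ and $q\in C(v,\frac{1}{k_0})$; since your $p_k,q_k$ eventually lie in these cones, $d(\gamma_v(0),\gamma_{p_k,q_k})\to 0$ regardless of whether the proposition holds or fails. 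So you are forced into your case (c), and there the argument breaks: because the approximants are diagonal in $k$, they converge to $\gamma_v(\mp\infty)$ rather than to any fixed pair $\xi_{k_0},\eta_{k_0}$, so the limit of the geodesics $\gamma_{p_k,q_k}$ is just a geodesic biasymptotic to $\gamma_v$ itself. It connects $\gamma_v(-\infty)$ to $\gamma_v(+\infty)$ and says nothing about the pairs $(\xi_k,\eta_k)$ assumed to be non-connectable; no contradiction is produced.

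The fix is exactly the ``more robust'' alternative you mention in passing but do not carry out, and it needs no contradiction at all. From $\mathrm{rank}(v)=1$ and Proposition \ref{pro4} one first extracts the uniform bound: there exist $k_0$ and $c>0$ such that \emph{every} connecting geodesic $\gamma_{p,q}$ with $p\in C(-v,\frac{1}{k_0})\cap X$ and $q\in C(v,\frac{1}{k_0})\cap X$ satisfies $d(\gamma_v(0),\gamma_{p,q})<c$. Set $\epsilon=\frac{1}{k_0}$. Now fix $\xi\in C_\epsilon(-v)$ and $\eta\in C_\epsilon(v)$ and approximate this \emph{fixed} pair by interior points $p_i\to\xi$, $q_i\to\eta$ inside the same cones. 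The uniform bound forces all the geodesics $\gamma_{p_i,q_i}$ to meet the compact ball $\overline{B(\gamma_v(0),c)}$; parametrizing so that $\gamma_{p_i,q_i}(0)$ lies in this ball and passing to a subsequence, the vectors $\gamma'_{p_i,q_i}(0)$ converge to some $w$, and Proposition \ref{pro1} gives $\gamma_w(-\infty)=\xi$ and $\gamma_w(+\infty)=\eta$. Without that uniform bound the compactness step fails --- there is nothing to keep the approximating geodesics from escaping to infinity --- which is why identifying the bound as the contrapositive of Proposition \ref{pro4} is the essential point missing from your write-up.
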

\begin{proof}
Since $\text{rank}(v)=1$, by Proposition \ref{pro4}, there exist $k_{0}\in \mathbb{Z}^+$ and a constant number $c > 0$ such that for any points $p\in C(-v,\frac{1}{k_{0}})$ and $q\in C(v,\frac{1}{k_{0}})$, we have $d(\gamma_{v}(0),\gamma_{p,q}) < c$.

Pick up points $p_{i}\in C(-v,\frac{1}{k_{0}})$ and $q_{i}\in C(v,\frac{1}{k_{0}})$ such that $p_{i}\rightarrow \xi$, $q_{i}\rightarrow \eta$. We choose the parametrization of the geodesic $\gamma_{p_{i},q_{i}}$ such that $\gamma_{p_{i},q_{i}}(0)\in B(\gamma_{v}(0),c)$. By the compactness, passing to a subsequence if necessary, we can assume $w = \lim_{i\rightarrow +\infty}\gamma'_{p_{i},q_{i}}(0)$. Then Proposition \ref{pro1} implies that $\gamma_{w}(-\infty)=\xi$ and $\gamma_{w}(+\infty)=\eta$, thus $\gamma_{w}$ is the geodesic $\gamma_{\xi,\eta}$ we want.
\end{proof}

\begin{proposition}\label{pro6}
Let $X$ be a simply connected manifold without focal points and $rank(v)=1$, where $v\in SX$ is a unit vector. Then for any $\epsilon > 0$, there are neighborhoods $U_{\epsilon}$ of $\gamma_{v}(-\infty)$ and $V_{\epsilon}$ of $\gamma_{v}(+\infty)$ such that for each pair $(\xi,\eta)\in U_{\epsilon}\times V_{\varepsilon}$, the geodesic $\gamma_{\xi,\eta}$ defined in Proposition \ref{pro5} is rank $1$ and $d(\gamma_{v}(0),\gamma_{\xi,\eta})<\epsilon$.
\end{proposition}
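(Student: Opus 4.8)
The plan is to argue by contradiction, reducing everything to the compactness of the space of ``connecting geodesics'' produced by Propositions~\ref{pro4}--\ref{pro5}, combined with the continuity to infinity (Proposition~\ref{pro1}) and the flat strip theorem. Fix $\epsilon>0$. Let $\epsilon_0>0$, $k_0\in\mathbb{Z}^+$ and $c>0$ be the constants appearing in (the proof of) Proposition~\ref{pro5}, so that for $\xi\in C_{\epsilon_0}(-v)$ and $\eta\in C_{\epsilon_0}(v)$ the geodesic $\gamma_{\xi,\eta}$ exists and may be parametrized so that $\gamma_{\xi,\eta}(0)\in\overline{B}(\gamma_v(0),c)$. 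Suppose the conclusion fails for this $\epsilon$. Then for every pair of neighbourhoods $U\ni\gamma_v(-\infty)$, $V\ni\gamma_v(+\infty)$ there is $(\xi,\eta)\in U\times V$ for which $\gamma_{\xi,\eta}$ is not rank $1$, or $d(\gamma_v(0),\gamma_{\xi,\eta})\ge\epsilon$. Running through a neighbourhood basis we obtain $\xi_n\to\gamma_v(-\infty)$, $\eta_n\to\gamma_v(+\infty)$, which lie in $C_{\epsilon_0}(-v)$ resp.\ $C_{\epsilon_0}(v)$ for $n$ large (these cones are neighbourhoods of the respective endpoints by Proposition~\ref{pro2.4}), such that, after passing to a subsequence, the geodesics $\gamma_n:=\gamma_{\xi_n,\eta_n}$ either all have $\mathrm{rank}\ge 2$, or all satisfy $d(\gamma_v(0),\gamma_n)\ge\epsilon$.

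The common compactness step is next. Parametrize $\gamma_n$ as above so that $\gamma_n(0)\in\overline{B}(\gamma_v(0),c)$, a compact set; then $\gamma'_n(0)$ lies in a compact subset of $SX$, so after a further subsequence $\gamma'_n(0)\to w_*\in SX$. Since $\gamma_n(-\infty)=\xi_n\to\gamma_v(-\infty)$ and $\gamma_n(+\infty)=\eta_n\to\gamma_v(+\infty)$, the continuity to infinity (Proposition~\ref{pro1}) gives $\gamma_{w_*}(-\infty)=\gamma_v(-\infty)$ and $\gamma_{w_*}(+\infty)=\gamma_v(+\infty)$; that is, $\gamma_{w_*}$ is biasymptotic to $\gamma_v$. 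If $\gamma_{w_*}$ and $\gamma_v$ were distinct geodesics, by the flat strip theorem (valid without focal points, cf.~\cite{Os}) they would bound a nondegenerate flat strip, producing a parallel perpendicular Jacobi field along $\gamma_v$ and contradicting $\mathrm{rank}(v)=1$. Hence $w_*=\gamma'_v(s)$ for some $s\in\mathbb{R}$, and by continuity of the geodesic flow $\gamma_n(t)\to\gamma_{w_*}(t)=\gamma_v(t+s)$ uniformly on compact $t$-intervals.

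It remains to dispose of the two alternatives. If $d(\gamma_v(0),\gamma_n)\ge\epsilon$ for all $n$: since $\gamma_n(-s)\to\gamma_{w_*}(-s)=\gamma_v(0)$, we get $d(\gamma_v(0),\gamma_n)\le d(\gamma_v(0),\gamma_n(-s))\to 0$, a contradiction. If $\mathrm{rank}(\gamma_n)\ge 2$ for all $n$: choose unit parallel Jacobi fields $J_n$ along $\gamma_n$ with $J_n(0)\perp\gamma'_n(0)$. Again $J_n(0)$ lies in a compact set, so after a subsequence $J_n(0)\to J_*\in S_{\pi(w_*)}X$ with $J_*\perp w_*$; let $J$ be the parallel field along $\gamma_{w_*}=\gamma_v$ with this initial value. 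Being a parallel Jacobi field means $\nabla_{\gamma'_n}J_n\equiv 0$ and $R(J_n(t),\gamma'_n(t))\gamma'_n(t)\equiv 0$; letting $n\to\infty$ and using continuity of parallel transport and of the curvature tensor yields $R(J(t),\gamma'_v(t))\gamma'_v(t)\equiv 0$, so $J$ is a parallel Jacobi field along $\gamma_v$, perpendicular to $\gamma'_v$ and nonzero, whence $\mathrm{rank}(v)\ge 2$ — again a contradiction. (Equivalently, one may invoke that the regular set is open in $SX$ because rank is upper semicontinuous, and $\gamma'_n(0)\to\gamma'_v(s)$ is regular.) Therefore suitable neighbourhoods $U_\epsilon,V_\epsilon$ exist.

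The main point of the argument — and the only place where $\mathrm{rank}(v)=1$ is genuinely used — is the a~priori bound $d(\gamma_v(0),\gamma_n)\le c$ coming from Propositions~\ref{pro4}--\ref{pro5}, which prevents the connecting geodesics from escaping to infinity and makes the compactness step possible. Once that is in hand, passing the rank condition to the limit is routine, since being a parallel Jacobi field is a closed condition under $C^0$-convergence of the geodesics together with their Jacobi fields; the distance estimate then falls out immediately from $\gamma_n\to\gamma_v$. The minor technical points to be careful about are the choice of parametrization of $\gamma_n$ (inherited from Proposition~\ref{pro5}) and the verification that $C_{\epsilon_0}(\pm v)$ are honest neighbourhoods of $\gamma_v(\mp\infty)$, both of which are immediate from the results already established.
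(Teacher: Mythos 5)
Your proof is correct and follows essentially the same route as the paper's: argue by contradiction, use the a priori bound coming from Proposition \ref{pro5} to extract a convergent subsequence of connecting geodesics, identify the limit geodesic with $\gamma_{v}$ via continuity to infinity (Proposition \ref{pro1}) and the flat strip theorem, and conclude from the closedness of the singular set (equivalently, upper semicontinuity of rank). If anything, your handling of the distance estimate --- deducing $d(\gamma_{v}(0),\gamma_{n})\to 0$ directly from $\gamma'_{n}(0)\to\gamma'_{v}(s)$ --- is more self-contained than the paper's, which at that point only refers back to ``a similar argument as Proposition \ref{pro4}.''
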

\begin{proof}
By Proposition \ref{pro5}, for sufficiently small neighborhoods $U_{\epsilon}$, $V_{\epsilon}$ and all $\xi\in U_{\epsilon}$, $\eta\in V_{\epsilon}$,
there is also a geodesic $\gamma_{\xi,\eta}$ connecting $\xi$ and $\eta$.

Now we prove $d(\gamma_{v}(0),\gamma_{\xi,\eta})<\epsilon$. Suppose this is not true, then we can find a positive number $\epsilon_{0}$ and two sequences $\{\xi_{n}\}^{+\infty}_{n=1},\ \{\eta_{n}\}^{+\infty}_{n=1}\subset X(\infty) $ such that for all $n\in\mathbb{Z}^+$,
\begin{enumerate}
\item{} ~~$\lim_{n\rightarrow +\infty}\xi_{n}=\gamma_{v}(-\infty), ~~\lim_{n\rightarrow +\infty} \eta_{n}=\gamma_{v}(+\infty)$,
\item{} ~~The connecting geodesic $\gamma_{\xi_{n},\eta_{n}}$ satisfies $d(\gamma_{v}(0),\gamma_{\xi_{n},\eta_{n}}) \geq \epsilon_{0}$.
\end{enumerate}
Then by a similar argument of Proposition \ref{pro4}, we can show that $\text{rank}(v)\geq 2$, which contradict our assumption that $\text{rank}(v)=1$.

Second, we prove all $\gamma_{\xi,\eta}$'s are rank $1$ geodesics. By contradiction, we assume that, there are two sequences $\{\xi_{n}\}^{+\infty}_{n=1}\subset X(\infty)$ and
$\{\eta_{n}\}^{+\infty}_{n=1}\subset X(\infty)$ such that,
$$\lim_{n\rightarrow +\infty}\xi_{n}=\gamma_{v}(-\infty), ~~\lim_{n\rightarrow +\infty} \eta_{n}=\gamma_{v}(+\infty),$$
and the connecting geodesic $\gamma_n:=\gamma_{\xi_{n},\eta_{n}}$ is not rank $1$.

We choose the parametrization of $\gamma_{n}$ such that $d(\gamma_{v}(0),\gamma_{n}(0))=d(\gamma_{v}(0),\gamma_{n})$.

Since $\{\gamma_{n}(0)\}$ are in a compact subset, passing to a subsequence if necessary,
we consider the following two cases:\\
\textbf{Case I}~~$\lim_{n\rightarrow +\infty}\gamma_{n}(0)=\gamma_{v}(0)$. Since
$$\gamma_{n}(-\infty)=\xi_{n} \rightarrow \gamma_{v}(-\infty), ~~\gamma_{n}(+\infty)=\eta_{n} \rightarrow \gamma_{v}(+\infty),$$
by Lemma \ref{OS} and Proposition \ref{pro1}, we have $\lim_{n\rightarrow +\infty}\gamma'_{n}(0)=\gamma'_{v}(0)=v$.
Since higher rank set $\text{sing}$ is closed, $\text{rank}(v)\geq 2$, a contradiction.\\
\textbf{Case II}~~$\lim_{n\rightarrow +\infty}\gamma_{n}(0)\neq\gamma_{v}(0)$.
In this case, if we denote by $v'=\lim_{n\rightarrow +\infty}\gamma'_{n}(0)$, then $v'\neq v$.
By Proposition \ref{pro1},
$$\gamma_{v'}(-\infty)=\lim_{n\rightarrow +\infty}\gamma_{n}(-\infty)=\gamma_{v}(-\infty),
~~\gamma_{v'}(+\infty)=\lim_{n\rightarrow +\infty}\gamma_{n}(+\infty)=\gamma_{v}(+\infty).$$
Thus $\text{rank}(v)\geq 2$, a contradiction.
\end{proof}

An isometry $\alpha \in \text{Iso}(X)$ is called \emph{axial} if there exists a geodesic $\gamma$ and a $t_{0}>0$ such that for any $t\in \mathbb{R}$, $\alpha(\gamma(t)) = \gamma(t+t_{0})$. Correspondingly $\gamma$ is called an \emph{axis} of $\alpha$.
Axial isometry is abundant for compact manifolds without conjugate points.
Each element in $\Gamma$ is axial if $X/\Gamma$ is compact (cf. \cite{CS} Lemma 2.1).
The following property of axial isometry was proved by Watkins:


\begin{lemma}[cf. Watkins \cite{Wat}]\label{pro7}
Let $X$ be a simply connected manifold without focal points
and let $\gamma$ be a rank $1$ geodesic axis of $\alpha \in \Gamma \subset \text{Iso}(X)$.
For all neighborhood $U \subset \overline{X}$ of $\gamma(-\infty)$ and $V \subset \overline{X}$ of $\gamma(+\infty)$,
there is $N \in \mathbb{Z}^+$ such that
$$\alpha^{n}(\overline{X}-U)\subset V, ~~\alpha^{-n}(\overline{X}-V)\subset U$$
for all $n \geq N$.
\end{lemma}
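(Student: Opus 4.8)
The statement is about a rank $1$ axial isometry $\alpha$ with axis $\gamma$, and the goal is a "north-south dynamics" type conclusion: for neighborhoods $U\ni\gamma(-\infty)$ and $V\ni\gamma(+\infty)$, high powers $\alpha^n$ push the complement of $U$ into $V$. The plan is to argue by contradiction, exploiting the connecting-geodesic machinery of Propositions \ref{pro5} and \ref{pro6} together with the continuity to infinity (Proposition \ref{pro1}) and the fact that $\alpha$ translates along $\gamma$.

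First I would fix the setup: since $\gamma$ is rank $1$ and is an axis of $\alpha$, say $\alpha(\gamma(t))=\gamma(t+t_0)$ with $t_0>0$, so that $\alpha^n(\gamma(t))=\gamma(t+nt_0)$; in particular $\alpha^n(\gamma(0))\to\gamma(+\infty)$ and $\alpha^{-n}(\gamma(0))\to\gamma(-\infty)$ in the cone topology as $n\to+\infty$, by Proposition \ref{pro1} (or Proposition \ref{pro2.4}). The symmetry between the two claimed inclusions is the obvious one: applying the first inclusion to $\alpha^{-1}$ (whose axis is the reversed geodesic, with $\gamma(+\infty)$ and $\gamma(-\infty)$ interchanged) yields the second, so it suffices to prove $\alpha^n(\overline X - U)\subset V$ for all large $n$.

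For that, suppose not: there is a neighborhood $V$ of $\gamma(+\infty)$, a subsequence $n_k\to\infty$, and points $\xi_k\in\overline X - U$ with $\alpha^{n_k}(\xi_k)\notin V$. By compactness of $\overline X$ (homeomorphic to a closed ball, as recalled after the definition of the cone topology) we may pass to subsequences so that $\xi_k\to\xi\in\overline X - U$ (a closed set, hence $\xi\ne\gamma(-\infty)$) and $\alpha^{n_k}(\xi_k)\to\zeta\in\overline X - V$ (closed, hence $\zeta\ne\gamma(+\infty)$). The key step is to derive a contradiction by showing $\zeta$ must equal $\gamma(+\infty)$. The heuristic is that a point $\xi$ different from $\gamma(-\infty)$ "sees" $\gamma$ from outside its negative endpoint, so the forward translates $\alpha^{n_k}$ drag it toward $\gamma(+\infty)$. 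To make this rigorous I would distinguish cases according to whether $\xi\in X$ or $\xi\in X(\infty)$, and similarly whether $\xi=\gamma(+\infty)$ or not. If $\xi\in X$: then $\alpha^{n_k}(\xi_k)$ lies within bounded distance of $\alpha^{n_k}(\xi)=$ a point near $\gamma$ at parameter $\to+\infty$ (using that $\alpha$ is an isometry and $d(\alpha^{n_k}(\xi),\gamma(n_k t_0))$ is controlled by Lemma \ref{Ka} or Lemma \ref{OS} applied to the geodesic from $\gamma(0)$ to $\xi$ and its $\alpha^{n_k}$-image), so $\alpha^{n_k}(\xi_k)\to\gamma(+\infty)$ by Proposition \ref{pro1} and Proposition \ref{pro2}, contradicting $\zeta\ne\gamma(+\infty)$. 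If $\xi\in X(\infty)$ with $\xi\ne\gamma(-\infty)$: here I would use Proposition \ref{pro5}/\ref{pro6} to produce a connecting geodesic; more directly, take any point $x\in X$ on the geodesic $\gamma_{\xi',\xi}$ joining $\gamma(-\infty)$ to a nearby $\xi'$ — but cleanly, it is enough to pick a point $y_k\in X$ on the ray $\gamma_{\gamma(0),\xi_k}$ with $d(\gamma(0),y_k)=R$ for fixed large $R$; then $\alpha^{n_k}(y_k)$ stays near $\gamma$ (again by Lemma \ref{Ka}) with footpoint-parameter $\to+\infty$, and by Proposition \ref{pro2} the truncated cones around $\gamma(+\infty)$ eventually contain $\alpha^{n_k}(y_k)$ and hence, after a further application of the convexity-type Lemma \ref{Ka} controlling $d(\gamma_{\gamma(0),\xi_k}(t),\gamma)$, also the endpoints $\alpha^{n_k}(\xi_k)$; so $\zeta=\gamma(+\infty)$, contradiction.

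The main obstacle I anticipate is the case $\xi\in X(\infty)\setminus\{\gamma(-\infty)\}$: one must show uniformly (over the subsequence) that the images $\alpha^{n_k}(\xi_k)$ enter every truncated cone about $\gamma(+\infty)$, and the difficulty is that $\xi_k$ is a point at infinity for which one has no a priori quantitative control of "how far from $\gamma$" the connecting geodesic passes. The right tool is Proposition \ref{pro4} applied with $v=\gamma'(0)$: since $\mathrm{rank}(v)=1$, there exist $k_0$ and $c>0$ such that any geodesic joining a point of $C(-v,1/k_0)$ to a point of $C(v,1/k_0)$ meets $B(\gamma(0),c)$; translating this bound by $\alpha^{n_k}$ (an isometry fixing $\gamma$ setwise) localizes the translated connecting geodesics near $\gamma(n_kt_0)$, which forces their endpoints into shrinking cones about $\gamma(+\infty)$ by Proposition \ref{pro2}. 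Once this uniformity is in place, the contradiction with $\zeta\notin V$ is immediate, and the symmetric statement for $\alpha^{-n}$ follows by reversing $\gamma$.
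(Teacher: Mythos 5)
The paper does not actually prove this lemma; it is quoted from Watkins \cite{Wat} and used as a black box, so there is no in-paper argument to compare yours against and your proposal must stand on its own. Its overall shape (contradiction, compactness of $\overline{X}$, case analysis on the limit $\xi$ of the offending points $\xi_k$) is the right one, and two of your cases do work: if $\xi\in X$, then $d(\alpha^{n_k}\xi_k,\gamma(n_kt_0))$ stays bounded because $\alpha^{n_k}$ is an isometry sending $\gamma(0)$ to $\gamma(n_kt_0)$, and Proposition \ref{pro2} finishes; and if the $\xi_k$ eventually lie in a small cone $C(v,1/k_0)$ about $\gamma(+\infty)$, the geodesic from $\gamma(-n_kt_0)\in C(-v,1/k_0)$ to $\xi_k$ does satisfy the hypotheses of the contrapositive of Proposition \ref{pro4}, hence passes within $c$ of $\gamma(0)$, and its $\alpha^{n_k}$-image runs from $\gamma(0)$ to $\alpha^{n_k}\xi_k$ through $B(\gamma(n_kt_0),c)$, which does force $\alpha^{n_k}\xi_k$ into small truncated cones about $\gamma(+\infty)$.

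The gap is the remaining, and main, case: $\xi\in X(\infty)$ with $\xi\neq\gamma(-\infty)$ but $\xi$ outside the forward cone $C(v,1/k_0)$, i.e.\ a generic boundary point at a definite angle from both ends of $\gamma$. Proposition \ref{pro4} gives no information here: its conclusion controls only geodesics whose two endpoints lie in $C(-v,1/k_0)$ and $C(v,1/k_0)$ respectively, whereas the whole content of the lemma is to show that $\alpha^n$ pushes points that are \emph{not yet} in a small forward cone into one — assuming they already are would be circular. Your fallback argument (take $y_k$ at distance $R$ on $\gamma_{\gamma(0),\xi_k}$, note $\alpha^{n_k}(y_k)\in B(\gamma(n_kt_0),R)$, then invoke Lemma \ref{Ka} to control the endpoint) also fails: Lemma \ref{Ka} bounds a geodesic at intermediate times by its values at the two ends of an interval, so it cannot propagate control from a bounded initial segment out to the endpoint at infinity. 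Indeed the translated ray $\alpha^{n_k}\gamma_{\gamma(0),\xi_k}$ leaves $\gamma(n_kt_0)$ at the angle $\angle_{\gamma(0)}(v,\xi_k)\to\angle_{\gamma(0)}(v,\xi)>0$, and nothing you have written constrains where it ends up. Closing this case is the actual work of the lemma: in Watkins (following Ballmann's nonpositive-curvature argument) one shows that geodesics from $\gamma(-nt_0)$ to points kept at a definite angle from $\gamma(-\infty)$ pass uniformly close to $\gamma(0)$ for large $n$, and the contradiction argument for that is a variant of the Case II interpolation in the proof of Proposition \ref{pro4} that must produce a flat half-plane bounded by $\gamma$; it does not follow from Proposition \ref{pro4} as stated, since there the second endpoint sequence is required to converge to $\gamma(+\infty)$.
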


Lemma \ref{pro7} and Proposition \ref{pro6} imply the following results.
\begin{corollary}\label{co2.11}
Let $X$ be a simply connected manifold without focal points, then
an end point of a rank $1$ axis can be connected by rank $1$ geodesics to all the other points in $X(\infty)$.
This implies that, for every $\xi\in X(\infty)$, there is a rank $1$ geodesic $\gamma_+$ with $\gamma_+(+\infty)=\xi$ and a rank $1$ geodesic $\gamma_{-}$ with $\gamma_{-}(-\infty)=\xi$.
\end{corollary}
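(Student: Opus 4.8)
The plan is to transport the semi-local statement of Proposition~\ref{pro6} around the ideal boundary by means of the north--south dynamics of an axial isometry supplied by Lemma~\ref{pro7}. Fix a rank~$1$ geodesic $\gamma$ that is an axis of some $\alpha\in\Gamma\subset\text{Iso}(X)$; recall that isometries of $X$ extend to homeomorphisms of $\overline{X}$, and since $\alpha\circ\gamma$ is a reparametrization of $\gamma$, every power $\alpha^{n}$ fixes the endpoints $\gamma(+\infty)$ and $\gamma(-\infty)$. Put $v=\gamma'(0)$, so $\text{rank}(v)=1$. Applying Proposition~\ref{pro6} (say with $\epsilon=1$) we obtain neighborhoods $U$ of $\gamma(-\infty)$ and $V$ of $\gamma(+\infty)$ in $\overline{X}$ such that for every $(\xi,\eta)\in U\times V$ the connecting geodesic $\gamma_{\xi,\eta}$ of Proposition~\ref{pro5} exists, is rank~$1$, and satisfies $\gamma_{\xi,\eta}(-\infty)=\xi$, $\gamma_{\xi,\eta}(+\infty)=\eta$.

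Let $\zeta\in X(\infty)$ be arbitrary. If $\zeta=\gamma(+\infty)$ or $\zeta=\gamma(-\infty)$, then $\gamma$ itself (suitably oriented) already connects $\zeta$ to $\gamma(+\infty)$ by a rank~$1$ geodesic; so assume $\zeta\neq\gamma(+\infty)$. Using the cone topology, choose a neighborhood $W$ of $\gamma(+\infty)$ with $W\subset V$ and $\zeta\notin W$. Lemma~\ref{pro7} applied to the pair $(U,W)$ gives $N\in\mathbb{Z}^+$ with $\alpha^{-N}(\overline{X}-W)\subset U$, whence $\xi_0:=\alpha^{-N}(\zeta)\in U$. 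Since $\gamma(+\infty)\in V$, Proposition~\ref{pro6} furnishes the rank~$1$ geodesic $\sigma_0:=\gamma_{\xi_0,\gamma(+\infty)}$ with $\sigma_0(-\infty)=\xi_0$ and $\sigma_0(+\infty)=\gamma(+\infty)$. Then $\sigma:=\alpha^{N}\circ\sigma_0$ is again a geodesic of rank~$1$ (isometries preserve rank), with $\sigma(-\infty)=\alpha^{N}(\xi_0)=\zeta$ and $\sigma(+\infty)=\alpha^{N}(\gamma(+\infty))=\gamma(+\infty)$. Thus $\sigma$ is a rank~$1$ geodesic joining $\zeta$ to $\gamma(+\infty)$, which proves the first assertion (the endpoint $\gamma(-\infty)$ of the axis being handled symmetrically by reversing orientations throughout).

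For the second assertion, recall that, $M$ being compact and rank~$1$, there is a rank~$1$ axis $\gamma\subset X$ (the lift of a rank~$1$ closed geodesic of $M$), so the first assertion applies to it. Given $\xi\in X(\infty)$, it produces a rank~$1$ geodesic $\sigma$ whose two endpoints are $\xi$ and $\gamma(+\infty)$ (take $\sigma=\gamma$ when $\xi=\gamma(+\infty)$). Reversing the parametrization preserves rank, since the parallel Jacobi fields along $t\mapsto\sigma(-t)$ are precisely the reparametrized parallel Jacobi fields along $\sigma$, so that $\text{rank}(-w)=\text{rank}(w)$ for all $w\in SX$. Hence we may take $\gamma_{+}$ to be $\sigma$ or $t\mapsto\sigma(-t)$, arranged so that $\gamma_{+}(+\infty)=\xi$, and likewise choose $\gamma_{-}$ with $\gamma_{-}(-\infty)=\xi$.

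The only genuinely delicate point is the bookkeeping: one must make sure the neighborhood $U$ produced by Proposition~\ref{pro6} is literally the one fed into Lemma~\ref{pro7}, and keep the $(\pm\infty)$-labels of the various connecting geodesics consistent so that the endpoint conditions survive the application of $\alpha^{N}$. All the geometric substance is already packaged in Proposition~\ref{pro6} and Lemma~\ref{pro7}, so no further estimates are needed.
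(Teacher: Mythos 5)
Your proposal is correct and follows essentially the same route as the paper: obtain the rank~$1$ connecting geodesics on a neighborhood pair $U\times V$ of the axis endpoints via Proposition~\ref{pro6}, push an arbitrary $\zeta$ into $U$ with a high power of the axial isometry using Lemma~\ref{pro7}, and transport the resulting rank~$1$ geodesic back with $\alpha^{N}$. Your handling of the auxiliary neighborhood $W$ with $\zeta\notin W$ is in fact slightly more careful than the paper's wording of the same step.
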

\begin{proof}
Let $\gamma$ be a rank $1$ geodesic axis of $\alpha \in \Gamma \subset \text{Iso}(X)$.
By Proposition \ref{pro6} there exist neighborhoods $U$ of $\gamma(-\infty)$ and $V$ of $\gamma(+\infty)$ in $X(\infty)$, such that for each pair
$(\xi,\eta)\in U \times V$, there is a rank 1 geodesics connecting
$\gamma_{+}=\gamma_{\gamma(+\infty),\xi}$ and $\gamma_{-}=\gamma_{\eta,\gamma(-\infty)}$.

For each $\xi \in X(\infty)$ with $\xi \neq \gamma(+\infty)$, Lemma \ref{pro7} implies that there exists $N>0$ such that $\alpha^{-N}\xi \in U$.
By Proposition \ref{pro6} there exists a rank $1$ connecting geodesic $\gamma_{\gamma(+\infty),\alpha^{-N}\xi}$.
Notice that $\gamma$ is a rank $1$ geodesic axis of $\alpha$, and $\gamma_{+}:=\alpha^{N}\gamma_{\gamma(+\infty),\alpha^{-N}\xi}$ is a geodesic connecting
$\alpha^N(\gamma(+\infty))=\gamma(+\infty)$ and $\xi$.
The fact that $\gamma_{\gamma(+\infty),\alpha^{-N}\xi}$ is rank 1 implies that $\gamma_{+}$ is a rank 1 geodesic connecting $\gamma(+\infty)$ and $\xi$.

Similarly, we can show that for any $\eta \in X(\infty)$ and $\eta \neq \gamma(-\infty)$, there exists rank 1 geodesic connecting them.
\end{proof}

The connecting geodesic in Corollary \ref{co2.11} is unique since it is rank 1.

Combining Proposition \ref{pro3} and Lemma \ref{pro7}, we can get the following result. We remark that this result is the no focal points version of Lemma $2.3$ and Corollary $2.4$ of \cite{Kn2}.

\begin{proposition}\label{pro8}
Let $X$ be a simply connected manifold without focal points and let
$\Gamma \subset \text{Iso}(X)$ be
such that $M=X/\Gamma$ is compact. If $\gamma$ is a rank $1$ geodesic on X,
then there exists a sequence of rank $1$ axes $\gamma_{n}$ of deck transformations
$\alpha_{n}\in \Gamma$
such that $\gamma_{n} \rightarrow \gamma$. Furthermore, $\Gamma$ acts minimally on $X(\infty)$,
i.e., for any $\xi \in X(\infty)$, $\overline{\Gamma\xi}=X(\infty)$.
\end{proposition}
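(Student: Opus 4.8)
The plan is to prove the two assertions in turn; the bulk of the work is the first one --- that the given rank $1$ geodesic $\gamma$ is a limit of rank $1$ axes of elements of $\Gamma$ --- and the minimality of the $\Gamma$-action on $X(\infty)$ then follows quickly.

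For the first assertion, write $v=\dot\gamma(0)$, $p=\gamma(0)$ and $\xi^{\pm}=\gamma(\pm\infty)$; these endpoints are distinct because, by Lemma~\ref{OS}(1), the rays $\gamma|_{[0,\infty)}$ and $\gamma_{-v}|_{[0,\infty)}$ are not asymptotic. Since $\text{rank}(v)=1$, Proposition~\ref{pro6} supplies, for each small $\epsilon>0$, neighborhoods $U_\epsilon\ni\xi^-$ and $V_\epsilon\ni\xi^+$ in $X(\infty)$ such that every pair in $U_\epsilon\times V_\epsilon$ is joined by a connecting geodesic which is rank $1$ and passes within $\epsilon$ of $p$; by the flat strip theorem a rank $1$ geodesic is the unique geodesic with its pair of endpoints, so this connecting geodesic is unique. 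By the duality property (Proposition~\ref{pro3}) there is a sequence $\alpha_n\in\Gamma$ with $\alpha_n(q)\to\xi^-$ and $\alpha_n^{-1}(q)\to\xi^+$ for all $q\in X$; for $n$ large $\alpha_n$ is a nontrivial, hence fixed-point-free, axial deck transformation of the covering $X\to M$, and it fixes the two endpoints $c_n(\pm\infty)$ of an axis $c_n$, with $c_n(+\infty)$ attracting in the sense that $\alpha_n^k q\to c_n(+\infty)$ as $k\to\infty$, for every $q\in X$ (this last fact follows from Proposition~\ref{pro2}).

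The crux is to show $c_n\to\gamma$. First I would establish north--south dynamics for the sequence $\{\alpha_n\}$: for any neighborhoods $\widehat U\ni\xi^-$ and $\widehat V\ni\xi^+$ in $\overline X$ with disjoint closures, $\alpha_n(\overline X\setminus\widehat V)\subset\widehat U$ and $\alpha_n^{-1}(\overline X\setminus\widehat U)\subset\widehat V$ once $n$ is large. This is where the geometry of no focal points is really used: the inputs are that $d(p,\alpha_n^{-1}p)$ and $d(p,\alpha_n p)$ tend to infinity while $\alpha_n^{-1}p$, resp.\ $\alpha_n p$, lies in cones about $\xi^+$, resp.\ $\xi^-$, that shrink as $n\to\infty$, and the tools are the monotonicity and sub-additivity of the distance function along geodesics (Lemmas~\ref{OS} and~\ref{Ka}), the opening-up of cones (Proposition~\ref{pro2}), and continuity to infinity (Proposition~\ref{pro1}). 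Granting the north--south statement, $c_n(+\infty)$ must lie in $\overline{\widehat U}$: taking any $q\notin\widehat V$, the orbit $\{\alpha_n^k q\}_{k\ge1}$ stays in $\widehat U$ (using $\widehat U\cap\widehat V=\emptyset$) yet converges to $c_n(+\infty)$, so $c_n(+\infty)\in\overline{\widehat U}$, and symmetrically $c_n(-\infty)\in\overline{\widehat V}$; shrinking $\widehat U,\widehat V$ yields $c_n(+\infty)\to\xi^-$ and $c_n(-\infty)\to\xi^+$. Then for $n$ large the pair $(c_n(+\infty),c_n(-\infty))$ lies in $U_\epsilon\times V_\epsilon$, so the time-reversal $\gamma_n(t):=c_n(-t)$ --- an axis of $\alpha_n^{-1}\in\Gamma$ --- has the same endpoints as the connecting geodesic of Proposition~\ref{pro6}; by the flat strip theorem together with the fact that the latter is rank $1$, $\gamma_n$ coincides with it, hence is rank $1$ and lies within $\epsilon$ of $p$. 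Reparametrizing so that $\gamma_n(0)$ is the point of $\gamma_n$ nearest $p$ and letting $\epsilon\to0$ forces $\gamma_n(0)\to p$; since also $\gamma_n(\pm\infty)\to\xi^\pm$, Proposition~\ref{pro1} and Lemma~\ref{OS} together pin every subsequential limit of $\dot\gamma_n(0)$ to $v$, so $\gamma_n\to\gamma$. Relabelling $\alpha_n^{-1}$ as $\alpha_n$ completes the first assertion. I expect the north--south step to be the main obstacle: without any curvature bound one cannot appeal to convexity of Busemann or displacement functions and must instead squeeze the required uniformity out of Lemmas~\ref{OS}, \ref{Ka} and Proposition~\ref{pro2}.

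For the second assertion, observe first that the first assertion applied to the given rank $1$ geodesic already produces a rank $1$ axis, so Corollary~\ref{co2.11} applies and every $\eta\in X(\infty)$ is an endpoint of some rank $1$ geodesic; since every rank $1$ geodesic is a limit of rank $1$ axes, Proposition~\ref{pro1} shows the set $E$ of endpoints of rank $1$ axes is dense in $X(\infty)$. Now fix $\xi\in X(\infty)$. For any rank $1$ axis $\sigma$ of $\beta\in\Gamma$ with $\xi\neq\sigma(-\infty)$, Lemma~\ref{pro7} gives $\beta^n\xi\to\sigma(+\infty)$, hence $\sigma(+\infty)\in\overline{\Gamma\xi}$; symmetrically $\sigma(-\infty)\in\overline{\Gamma\xi}$ whenever $\xi\neq\sigma(+\infty)$. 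As each axis excludes at most one value of $\xi$ in each of these two statements, and rank $1$ axes have endpoints filling up $E$, we conclude $E\subset\overline{\Gamma\xi}$, so $\overline{\Gamma\xi}\supset\overline E=X(\infty)$. Therefore $\Gamma$ acts minimally on $X(\infty)$.
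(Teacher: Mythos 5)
Your architecture is close to the paper's in its endpoints (duality via Proposition \ref{pro3}, identification of the axis with the unique rank~$1$ connecting geodesic via Proposition \ref{pro6} and the flat strip theorem, and Lemma \ref{pro7} for minimality), but the mechanism by which you produce the axes is genuinely different: you invoke the axiality of every deck transformation and locate the endpoints of the axis $c_n$ by the attracting/repelling behaviour of $\alpha_n^{\pm k}$, whereas the paper never uses axiality at this stage --- it produces the fixed points $p_n,q_n$ of $\alpha_n$ by applying the Brouwer fixed point theorem to $\alpha_n$-invariant neighbourhoods $\overline{U}_n\ni\gamma(-\infty)$, $\overline{V}_n\ni\gamma(+\infty)$ of $\overline{X}\cong B_{\dim X}$, then rules out interior fixed points. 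Both routes are viable and both funnel through the same hard step.

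That hard step is where your proposal has a genuine gap. The north--south statement for the duality sequence --- $\alpha_n(\overline{X}\setminus\widehat V)\subset\widehat U$ for large $n$ --- is asserted (``First I would establish\ldots''), not proved, and the toolkit you list for it (Lemmas \ref{OS}, \ref{Ka}, Propositions \ref{pro1}, \ref{pro2}) cannot suffice, because it nowhere uses that $\gamma$ has rank $1$. Without that hypothesis the statement is false: on the universal cover of a flat torus every geodesic admits a duality sequence, yet no element of $\Gamma$ contracts the complement of a neighbourhood of one endpoint into a neighbourhood of the other. Any proof must route through Proposition \ref{pro4} (or \ref{pro5}/\ref{pro6}), i.e.\ through the fact that geodesics joining points in shrinking cones about $\gamma(\mp\infty)$ pass uniformly close to $\gamma(0)$; this is exactly the content of Knieper's Lemma 2.3 in \cite{Kn2} that the paper is adapting (the paper compresses it into the citation of Lemma \ref{pro7}, which strictly speaking applies only once one already has an axis). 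Since everything downstream in your first part is conditional on this step, the proposal is incomplete at its crux. A secondary, fixable point: in the minimality argument, ``each axis excludes at most one value of $\xi$'' does not yield $E\subset\overline{\Gamma\xi}$ as stated, since a point $\sigma(+\infty)$ with $\sigma(-\infty)=\xi$ is not reached via that axis; one must either choose the base axis in Corollary \ref{co2.11} so that the backward endpoints of the approximating axes avoid $\xi$, or argue as the paper does from the density of rank~$1$ axis \emph{vectors} in $SX$, which gives enough freedom to demand $\gamma_v(+\infty)\in U$ and $\gamma_v(-\infty)\neq\xi$ simultaneously.
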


\begin{proof}
By Proposition \ref{pro3}, there is a sequence $\{\alpha_{n}\}^{+\infty}_{n=1}\subset \Gamma$  such that
\begin{equation}\label{eq2.11}
\alpha^{-1}_{n}(p)\rightarrow \gamma(+\infty),~\mbox{and}~~\alpha_{n}(p)\rightarrow \gamma(-\infty),\ \forall\ p\in X.
\end{equation}
Lemma \ref{pro7} implies that the closure of sufficiently small neighborhoods $U_{n}$ of $\gamma(-\infty)$ and
$V_{n}$ of $\gamma(+\infty)$ in $\overline{X}$ with
$$\bigcap_{n=1}^{\infty}U_{n}=\{\gamma(-\infty)\},~~\bigcap_{n=1}^{\infty}V_{n}=\{\gamma(+\infty)\}$$
are invariant under $\alpha_{n}$ and $\alpha^{-1}_{n}$ respectively.
Since in cone topology, $\overline{X}=X \cup X(\infty)$ is homeomorphic to the $\dim(X)$-unit ball $B_{\dim(X)}$ in $\mathbb{R}^{\dim(X)}$,
we know $\overline{U}_{n}$ and $\overline{V}_{n}$ are both homeomorphic to $B_{\dim(X)}$.
Then by the Brouwer fixed point theorem,
$\alpha_{n}$ has one fixed point in $p_{n}\in U_{n}$ and $\alpha^{-1}_{n}$ has one fixed point in $q_{n}\in V_{n}$.

We claim that both $p_{n}$ and $q_{n}$ belong to $X(\infty)$ for sufficiently large $n$. We prove this fact by contradiction.
First, if $p_{n}\in X$ and $q_{n}\in X$, then $\alpha_{n}=\mbox{id}$ since $\alpha_{n} \in \Gamma \subset \mbox{Iso}(X)$, which contradicts to \eqref{eq2.11}.
Second, if $p_{n}\in X$ and $\xi=q_{n}\in X(\infty)$, then $\alpha_{n}$ maps the geodesic ray $\gamma_{p_{n},\xi}$ to itself and fixes endpoints,
thus $\alpha_{n}=\mbox{id}$, which also contradicts to \eqref{eq2.11}. The case $p_{n}\in X(\infty)$ and $q_{n}\in X$ can be considered in the same way.
Thus both $p_{n}$ and $q_{n}$ belong to $X(\infty)$ for large enough $n$.

Without loss of generality, we assume that both $p_{n}$ and $q_{n}$ belong to $X(\infty)$ for all $n\geq 1$. By the definition of $U_{n}$ and $V_{n}$, Proposition \ref{pro6} implies that for large enouth $n$, the connecting geodesic $\gamma_{n}=\gamma_{p_{n},q_{n}}$
is the rank $1$ axis of the axial isometry $\alpha_{n}$. Furthermore,
$$\gamma_{n}(-\infty)=p_{n}\rightarrow \gamma(-\infty), ~~  \gamma_{n}(+\infty)=q_{n}\rightarrow \gamma(+\infty),$$
thus $\gamma_{n}\rightarrow \gamma$.

Now we will prove $\overline{\Gamma\xi}=X(\infty)$. We only need to show that for any $\eta \in X(\infty)$ with $\eta \neq \xi$,
there exists a sequence $\{\beta_{n} \}\subset \Gamma$ such that $\beta_{n}(\xi)\rightarrow \eta$.
It's easy to see that the rank function
$$R: SM\rightarrow \mathbb{Z}^+,~~v \mapsto R(v)=\text{rank} (v)$$
is upper semi-continuous and integer-valued, thus the set of vectors $v\in SM$ such that $\text{rank} (w)=\text{rank} (v)$ for all $w$ sufficiently close to $v$ is dense in $SM$. Since this set is exactly the set of rank $1$ vectors (cf. \cite{Wat}), it follows that the set of rank $1$ vectors is dense in $SM$. Therefore the set of rank $1$ vectors is also dense in $SX$.
The above result implies that the set of vectors tangent to rank $1$ axes are dense in SX.
Therefore, for any open neighborhood $U \subset X(\infty)$ of $\eta$, there is a rank $1$ axis $\gamma_{v}$ of some $\alpha \in \Gamma$,
satisfying that $\gamma_{v}(+\infty) \in U$  and $\gamma_{v}(-\infty) \neq \xi$. Then by Lemma \ref{pro7} there exists $n \in \mathbb{Z}^+$ such that
$\alpha^{n}\xi \in U$. Since $U$ can be chosen arbitrarily small,
we can find a sequence $\{n_{k}\}\subset \mathbb{Z}^+$ and $\{\alpha_{k}\}\subset \Gamma$ such that
$\lim_{k\rightarrow +\infty}\alpha_{k}^{n_{k}}\xi=\eta$.
\end{proof}

\section{\textbf{Existence and uniqueness of the Busemann density}}\label{Busemann}
\subsection{Construction of a Busemann density}\label{Busemann1}
For each pair of points $(p,q)\in X \times X$ and each point at infinity $\xi \in X(\infty)$, the \emph{Busemann function} determined by $p, q$ and $\xi$ is
$$b_{p}(q,\xi):=\lim_{t\rightarrow +\infty}\{d(q,\gamma_{p,\xi}(t))-t\},$$
where $\gamma_{p,\xi}$ is the geodesic ray from $p$ to $\xi$.
The Busemann function $b_{p}(q,\xi)$ is well-defined since the function $t \mapsto d(q,\gamma_{p,\xi}(t))-t$
is bounded from above by $d(p,q)$, and decreasing in $t$ (this can be checked by using the triangle inequality).

The level sets of the Busemann function $b_{p}(q,\xi)$ are called the horospheres centered at $\xi$.
We denote by $H_{\xi}(\gamma_{p,\xi}(t))$ the horosphere centered at $\xi$ and passing through $\gamma_{p,\xi}(t)$.
For more details of the Busemann functions and horosphpers, please see ~\cite{DPS,Ru1,Ru2}.
At here, we restate one important property of the horospheres, which will be used later.

\begin{theorem}[cf.~Ruggiero \cite{Ru1}]\label{th3.15}
Horospheres are continuous with their centers, i.e., $H_{\xi}(\gamma_{p,\xi}(t))$ depends continuous on $\xi$.
\end{theorem}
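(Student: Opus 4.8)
The plan is to deduce the continuity of horospheres from the joint continuity of the Busemann function $b\colon X\times X(\infty)\to\R$, $(q,\xi)\mapsto b_p(q,\xi)$ (base point $p$ fixed), since $H_\xi(\gamma_{p,\xi}(t))=\{q\in X:\ b_p(q,\xi)=-t\}$ and $b_p(\cdot,\xi)$ is $1$-Lipschitz with $\|\nabla b_p(\cdot,\xi)\|\equiv 1$, so convergence of these functions forces local Hausdorff convergence of their level sets. First I would record that, for every $\xi$, the triangle inequality makes $t\mapsto d(q,\gamma_{p,\xi}(t))-t$ non-increasing, and each $q\mapsto d(q,\gamma_{p,\xi}(t))-t$ is $1$-Lipschitz; passing to the limit, $\{b_p(\cdot,\xi)\}_{\xi\in X(\infty)}$ is an equi-$1$-Lipschitz family. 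Hence it suffices to prove that $\xi\mapsto b_p(q,\xi)$ is continuous for each fixed $q$.

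For this I would invoke the classical regularity of Busemann functions on manifolds without focal points (cf.\ O'Sullivan \cite{Os}, Ruggiero \cite{Ru1,Ru2}): $b_p(\cdot,\xi)$ is $C^1$, and its gradient at $y$ is $\nabla_y b_p(\cdot,\xi)=-\gamma'_{y,\xi}(0)$, the unit vector at $y$ pointing to $\xi$ (note that $\gamma_{y,\xi}$ is unique, since by Lemma \ref{OS} two rays issuing from $y$ cannot be asymptotic). Combining this with the continuity to infinity established above — Proposition \ref{pro1} together with Proposition \ref{pro2.4} and Corollary \ref{cor2.5}, and Lemma \ref{OS}, shows that $(y,\xi)\mapsto\gamma'_{y,\xi}(0)$ is continuous on $X\times X(\infty)$, and $X(\infty)$ is compact — we obtain: if $\xi_n\to\xi$ then $\nabla b_p(\cdot,\xi_n)\to\nabla b_p(\cdot,\xi)$ uniformly on compact subsets of $X$.

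Now I would fix $q$, choose any $C^1$ path $c\colon[0,1]\to X$ with $c(0)=p$ and $c(1)=q$, and use $b_p(p,\xi)=0$ together with the fundamental theorem of calculus for the $C^1$ function $b_p(\cdot,\xi)$ to write
$$b_p(q,\xi)=\int_0^1\big\langle \nabla b_p(\cdot,\xi)|_{c(u)},\ c'(u)\big\rangle\,du.$$
Applying this to a sequence $\xi_n\to\xi$ and passing to the limit under the integral sign — legitimate because the integrands converge uniformly on the compact set $c([0,1])$ by the previous paragraph — yields $b_p(q,\xi_n)\to b_p(q,\xi)$. Thus $\xi\mapsto b_p(q,\xi)$ is continuous, and with the equi-Lipschitz bound from the first step, $b$ is jointly continuous on $X\times X(\infty)$. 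Finally, if $\xi_n\to\xi$ and $t$ is fixed, then $b_p(\cdot,\xi_n)+t\to b_p(\cdot,\xi)+t$ uniformly on compacta; since each such function is $1$-Lipschitz and strictly decreasing along the corresponding ray $\gamma_{p,\xi_n}$ (Lemma \ref{OS}), so that its zero set is a hypersurface crossed transversally by the ray, the zero sets $H_{\xi_n}(\gamma_{p,\xi_n}(t))$ converge to $H_\xi(\gamma_{p,\xi}(t))$ locally in the Hausdorff metric, which is the assertion.

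I expect the only genuine difficulty to be the regularity input in the second step. If one is unwilling to quote the $C^1$-property of Busemann functions, the argument must instead be split into two semicontinuity statements. Upper semicontinuity of $\xi\mapsto b_p(q,\xi)$ is immediate, since $b_p(q,\xi)=\inf_{t\ge0}\big[d(q,\gamma_{p,\xi}(t))-t\big]$ is an infimum of functions that are continuous in $\xi$ by continuity to infinity (Proposition \ref{pro1}). Lower semicontinuity is the hard direction; I would obtain it from the cocycle identity $b_p(\gamma_{q,\xi}(s),\xi)=b_p(q,\xi)-s$ — whose nontrivial inequality follows by noting that the minimizing segment from $q$ to $\gamma_{p,\xi}(t)$ converges, as $t\to\infty$, to the ray $\gamma_{q,\xi}$ by Proposition \ref{pro1}, so that its point at arclength $s$ tends to $\gamma_{q,\xi}(s)$, which lies on that segment — together with the antisymmetry $b_p(q,\xi)=-b_q(p,\xi)$, which turns lower semicontinuity of $b_p(q,\cdot)$ into upper semicontinuity of $b_q(p,\cdot)$. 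The antisymmetry is exactly the place where the absence of focal points enters essentially.
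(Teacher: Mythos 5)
The paper does not actually prove this statement: it is quoted from Ruggiero \cite{Ru1} and used as a black box, so there is no internal argument to compare yours against. Your main route is sound and essentially self-contained modulo one external input, namely the $C^1$ regularity of $b_p(\cdot,\xi)$ with $\nabla b_p(\cdot,\xi)|_y=-\gamma'_{y,\xi}(0)$ on manifolds without focal points. That input is legitimate at the level of rigor of this paper (the authors themselves presuppose it: the definition of $W^{s}(v)$ in Section \ref{maximal} uses $\nabla b_{\pi v}$, and the proof of Proposition \ref{pro3.18} uses that horospheres meet the rays toward $\xi$ perpendicularly), and the joint continuity of $(y,\xi)\mapsto\gamma'_{y,\xi}(0)$ does follow from Proposition \ref{pro1}, Proposition \ref{pro2.4} and Lemma \ref{OS} by exactly the compactness argument you indicate. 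Note that your proof inverts the paper's logical order: the paper deduces the continuity of $\xi\mapsto b_p(q,\xi)$ (Proposition \ref{pro3.18}) \emph{from} Theorem \ref{th3.15}, whereas you establish that continuity first and then recover the horosphere statement from equi-Lipschitzness and the unit-gradient property; this is coherent, since nothing you invoke depends on Theorem \ref{th3.15}.

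One caveat concerns your fallback argument. The antisymmetry $b_p(q,\xi)=-b_q(p,\xi)$ is not a formal consequence of the cocycle identity you establish: the cocycle identity together with the $1$-Lipschitz bound only yields $b_p(q,\xi)+b_q(p,\xi)\ge 0$, and the reverse inequality is equivalent to the statement that $b_p(\cdot,\xi)$ and $b_q(\cdot,\xi)$ differ by an additive constant. That is precisely the nontrivial geometric fact in this setting (in nonpositive curvature it follows from convexity of distance functions; without focal points one must argue via Lemma \ref{OS}(2) and the quasi-convexity of Lemma \ref{Ka}). You correctly identify this as the place where the absence of focal points enters, but as written it is asserted rather than proved, so the fallback is not yet a complete substitute for the $C^1$ citation in your main argument.
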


Remark: It is straightforwardly that $H_{\xi}(\gamma_{p,\xi}(t))$ depends continuously on $t$.

\begin{Defi}\label{def3}
Let $X$ be a simply connected Riemannian manifold without conjugate points and
$\Gamma \subset \text{Iso}(X)$ be a discrete subgroup. For a given constant $r>0$, a family of finite Borel measures $\{\mu_{p}\}_{p\in X}$
on $X(\infty)$ is called an $r$-dimensional Busemann density if
\begin{enumerate}
\item For any $p,\ q \in X$ and $\mu_{p}$-a.e. $\xi\in X(\infty)$,
$$\frac{d\mu_{q}}{d \mu_{p}}(\xi)=e^{-r \cdot b_{p}(q,\xi)}$$
 where $b_{p}(q,\xi)$ is the Busemann function.
\item $\{\mu_{p}\}_{p\in X}$ is $\Gamma$-equivariant, i.e., for all Borel sets $A \subset X(\infty)$ and for any $\alpha \in \Gamma$,
we have $$\mu_{\alpha p}(\alpha A) = \mu_{p}(A).$$
\end{enumerate}
\end{Defi}

The construction of such a Busemann density is due to Patterson \cite{Pat} in the case of Fuchsian groups, and generalized by Sullivan \cite{Su} for hyperbolic spaces.

Let $X$ be a simply connected Riemannian manifold without conjugate points and $\Gamma \subset \text{Iso}(X)$ be a infinite discrete subgroup. For each pair of points $(p,p_{0})\in X \times X$ and $s \in \mathbb{R}$,
\emph{Poincar\'e series} is defined as
\begin{equation}\label{poincare series}
P(s,p,p_{0}):= \sum_{\alpha \in \Gamma}e^{-s\cdot d(p,\alpha p_{0})}.
\end{equation}
This series may diverge for some $s>0$. The number $\delta:= \inf \{s \in \mathbb{R}\mid P(s,p,p_{0}) < +\infty\}$ is called the \emph{critical exponent of $\Gamma$}. By the triangle inequality, it is easy to check that $\delta$ is independent of $p$ and $p_{0}$. We say $\Gamma$ is of \emph{divergent type} if the Poincar\'e series diverges when $s=\delta$.
It is well-known that whether $\Gamma$ is of divergent type or not is independent of the choices of $p$ and $p_{0}$.
The following lemma shows the relation between the critical exponent and the topological entropy of the geodesic flow on $M=X/\Gamma$.

\begin{lemma}\label{pro9}
Let $(M,g)$ be a compact Riemannian manifold without conjugate points and $X$ be its universal cover, then $\delta=h_{\text{top}}(g)$, where $h_{\text{top}}(g)$ is the topological entropy of geodesic flow on $SM$.
\end{lemma}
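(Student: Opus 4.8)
The statement $\delta = h_{\text{top}}(g)$ is classical in spirit — it goes back to Manning's theorem relating topological entropy to the exponential growth rate of volume in the universal cover — so the plan is to bridge the Poincaré series to that volume growth rate and then invoke Manning. Concretely, I would proceed as follows.

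First, I would fix basepoints $p = p_0$ and compare the Poincaré series $P(s,p,p) = \sum_{\alpha \in \Gamma} e^{-s\, d(p,\alpha p)}$ with the orbit-counting function $N(R) := \#\{\alpha \in \Gamma : d(p, \alpha p) \le R\}$. A standard summation-by-parts (or dyadic-annulus) argument shows that the critical exponent $\delta$ equals the exponential growth rate $\limsup_{R\to\infty} \frac{1}{R}\log N(R)$: if $N(R) \le C e^{(a+\epsilon)R}$ then the series converges for $s > a$, and conversely if the series converges at $s$ then $N(R) = O(e^{sR})$. So the task reduces to identifying $\limsup_R \frac{1}{R}\log N(R)$ with $h_{\text{top}}(g)$.

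Second, I would relate $N(R)$ to $\operatorname{Vol}(B(p,R))$ in $X$. Since $M = X/\Gamma$ is compact, pick $D > 0$ with $\operatorname{diam}(M) < D$, so the $\Gamma$-translates of the ball $B(p,D)$ cover $X$; since $\Gamma$ is discrete and acts by isometries, there is also a uniform lower bound on how much the translates of a small ball $B(p,\rho)$ can overlap. Counting translates of these fixed-size balls inside $B(p,R)$ then gives two-sided bounds $c_1\, N(R-D) \le \operatorname{Vol}(B(p,R)) \le c_2\, N(R+\rho)$, whence $\limsup_R \frac{1}{R}\log N(R) = \lim_{R\to\infty} \frac{1}{R}\log \operatorname{Vol}(B(p,R))$ (the limit on the right existing by subadditivity-type arguments, or simply taking $\limsup$ throughout). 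Finally, by the theorem of Manning \cite{Man} — valid for compact manifolds without conjugate points, as recalled in the paragraph preceding Theorem C — this last limit is exactly $h_{\text{top}}(g)$. (Here one uses that on a manifold without conjugate points the exponential volume growth rate is genuinely a limit and equals $h_{\text{top}}$; the work of Freire–Mañé \cite{FrMa} is the relevant companion result.) Chaining the three identifications gives $\delta = h_{\text{top}}(g)$.

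The main obstacle is the second step — the careful passage between the combinatorial orbit count $N(R)$ and the Riemannian volume $\operatorname{Vol}(B(p,R))$ — because it requires a uniform bound on the multiplicity with which small $\Gamma$-translated balls overlap, and this is exactly where one must use properness and discreteness of the $\Gamma$-action together with compactness of $M$ (equivalently, that $\Gamma$ acts cocompactly with finite point stabilizers, which are trivial here since $M$ is a manifold). Everything else is soft: the first step is elementary analysis of Dirichlet-type series, and the third step is a black-box citation to Manning's theorem, which the paper has already invoked. One should also double-check that no-conjugate-points (rather than no-focal-points) suffices throughout — it does, since Manning's theorem and the volume-growth comparison only use the manifold structure and compactness, not the finer no-focal-points geometry.
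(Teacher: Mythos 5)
Your proposal is correct and follows the same overall route as the paper: both reduce the critical exponent to the exponential growth rate of the orbit of $\Gamma$, compare that with the volume growth of balls in $X$, and then quote Freire--Ma\~n\'e (and Manning) to identify the volume entropy with $h_{\text{top}}(g)$. The only real difference is in the bookkeeping. The paper counts orbit points in unit annuli ($S_k = \sharp\{\Gamma p_0 \cap (B(p,k+1)\setminus B(p,k))\}$) and compares with annulus volumes; this makes the upper bound $\delta \le h(g)+\epsilon$ easy, but forces an extra step for the lower bound, namely a proof by contradiction that the annulus volumes $\text{Vol}B(p,k+1)-\text{Vol}B(p,k)$ attain the full exponential rate $h(g)-\epsilon$ along a subsequence, followed by an integration over a fundamental domain to transfer this back to the Poincar\'e series. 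Your version works with the cumulative count $N(R)$ and the two-sided packing/covering comparison $c_1 N(R-D) \le \text{Vol}(B(p,R)) \le c_2 N(R+\rho)$, which plugs the ball-volume growth rate (exactly what Manning/Freire--Ma\~n\'e control) directly into the orbit count and avoids the subsequence argument entirely; the disjointness of the small translated balls (free, properly discontinuous action, so $\inf_{\alpha\neq e} d(p_0,\alpha p_0)>0$) and the covering by diameter-sized translates are exactly the inputs you identify. Both arguments are sound; yours is slightly more streamlined on the lower bound, while the paper's annulus formulation is what it reuses later (e.g.\ in the divergence discussion around Theorem C).
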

\begin{proof}
Since $X$ is the universal cover of $M$, by Freire and Ma\~n\'e's Theorem (cf.~\cite{FrMa}),
we know that $$h(g)=h_{\text{top}}(g),$$
where
$$h(g):=\lim_{r\rightarrow +\infty}\frac{\log \text{Vol}B(p,r)}{r}$$
is the volume entropy of the geodesic flow. Here $B(p,r)$ denotes the ball in $X$ centered at $p \in X$ with radius $r>0$.
In ~\cite{Man}, A. Manning showed that this limit is independent of $p$.

For each pair $(p,p_{0})\in X \times X$  and $k\geq 0$, we define
$$S_{k}:= \sharp\{\Gamma p_{0} \cap (B(p,k+1)-B(p,k))\}.$$
Then
$$\sum^{+\infty}_{k=0}S_{k}\cdot e^{-s(k+1)}\leq P(s,p,p_{0}) \leq\sum^{+\infty}_{k=0}S_{k}\cdot e^{-sk}
= \sum^{+\infty}_{k=0}e^{(\frac{\ln S_{k}}{k}-s)k}.$$
Therefore, $$\delta = \varlimsup_{k \rightarrow +\infty}\frac{1}{k}\ln S_{k} .$$

By the definition of $h(g)$, for any $\epsilon>0$, there exists $K_0\in \mathbb{Z}^+$ such that if $r\geq K_0$, then
$$e^{r(h(g)-\epsilon)}\leq \text{Vol}B(p,r)\leq e^{r(h(g)+\epsilon)}.$$
Let $d$ and $A$ be the diameter and area of a fundamental domain $\mathcal{F}$ respectively. Then there exists $K_1\in \mathbb{Z}^+$ such that for any $k\geq \max\{K_0,K_1\}$,
\begin{equation*}
\begin{aligned}
S_{k} \leq \frac{1}{A}(\text{Vol}B(p,k+1+d)-\text{Vol}B(p,k-d))\leq \frac{1}{A}\text{Vol}B(p,k+1+d)\leq \frac{1}{A}e^{(k+1+d)(h(g)+\epsilon)}.
\end{aligned}
\end{equation*}
Then it follows that
\begin{equation}\label{e:half1}
\begin{aligned}
\delta = \varlimsup_{k \rightarrow +\infty}\frac{1}{k}\ln S_{k}\leq h(g)+\epsilon.
\end{aligned}
\end{equation}

On the other hand, we claim that there exists $k_i\to \infty$ such that
\[
\text{Vol}B(p,k_i+1)-\text{Vol}B(p,k_i)\geq e^{(h(g)-\epsilon)k_i}.
\]
Assume not. Then there exists $K_2\in \mathbb{Z}^+$, such that for any $k>K_2$,
\begin{eqnarray*}
\text{Vol}B(p,k+1)-\text{Vol}B(p,k)&<&e^{(h(g)-\epsilon)k},\\
\text{Vol}B(p,k+2)-\text{Vol}B(p,k+1)&<&e^{(h(g)-\epsilon)(k+1)},\\
&\vdots&\\
\text{Vol}B(p,k+N)-\text{Vol}B(p,k+N-1)&<&e^{(h(g)-\epsilon)(k+N-1)}.
\end{eqnarray*}
This implies that
\[
\text{Vol}B(p,k+N)<(e^{(h(g)-\epsilon)k}+\cdots+e^{(h(g)-\epsilon)(k+N-1)})+\text{Vol}B(p,k)
\]
\begin{equation*}
<Be^{(h(g)-\epsilon)(k+N-1)}+e^{(h(g)+\epsilon)k}.
\end{equation*}
where $B=1+e^{-(h(g)-\epsilon)}+\cdots+e^{-(h(g)-\epsilon)N}+\cdots <+\infty$. It follows that
\[
\texttt{Vol}B(p,k+N)<2Be^{(h(g)-\epsilon)(k+N-1)}.
\]
Setting $N\rightarrow\infty$, then $\lim\limits_{k\rightarrow\infty}\frac{1}{k}\log \text{Vol}B(p,k)\leq h(g)-\epsilon$. We get a contradiction.


Now observe that
\begin{equation*}
\begin{aligned}
\int_{\mathcal{F}}\sum_{\gamma\in \Gamma} e^{-sd(p, \gamma y)}d\text{Vol}(y)&=\int_X e^{-sd(p, y)}d\text{Vol}(y)
&\geq \sum_{k=0}^\infty e^{-s(k+1)}(\text{Vol}B(p,k+1)-\text{Vol}B(p,k)).
\end{aligned}
\end{equation*}

If we let $s= h(g)-2\epsilon$ and use the claim above, then the right side of the above inequality is infinity. Thus
\begin{equation}\label{e:half2}
\delta\geq h(g)-2\epsilon.
\end{equation}
So $\delta=h(g)=h_{\text{top}}(g)$ by \eqref{e:half1} and \eqref{e:half2}.
\end{proof}

Fix a point $p_{0}\in X$. For each $s > \delta = h$ where $h=h_{\text{top}}(g)$ and $p \in X$, consider the measure:
$$\mu_{p,s}:=\frac{\sum_{\alpha \in \Gamma}e^{-s\cdot d(p,\alpha p_0)}\delta_{\alpha p_{0}}}{\sum_{\alpha\in\Gamma}e^{-s \cdot d(p_{0},\alpha p_{0})}},$$
where $\delta_{\alpha p_{0}}$ is the Dirac measure at point $\alpha p_{0}$. The following properties of $\mu_{p,s}$ can be easily checked by using the triangle inequality:
\begin{enumerate}
\item $e^{-s\cdot d(p, p_{0})} \leq \mu_{p,s}(\overline{X}) \leq e^{s\cdot d(p, p_{0})}$.
\item $\Gamma p_{0} \subset \text{supp}\mu_{p,s} \subset \overline{\Gamma p_{0}}$, where $\Gamma p_{0}$ is the orbit of $p_0\in X$ under the action of $\Gamma$.
\end{enumerate}

Here in constructing a Busemann density,  we consider the case that the group $\Gamma$ is of divergent type first.
At the end of this subsection, we give a sketch of Patterson's  method to deal with the convergent type (cf.~\cite{Pat}).

Now for $p\in X$, consider a $weak^{\star}$ limit
$$\lim_{s_{k}\downarrow h}\mu_{p,s_{k}}=\mu_{p}.$$
Since $P(s,p_{0},p_{0})$ is divergent for $s=h$ and $\Gamma$ is discrete, it's obvious that $\text{supp}(\mu_{p})\subset \overline{\Gamma p_{0}} \cap X(\infty)$. In fact, one can easily check that $\lim_{s_{k}\downarrow h}\mu_{p,s_{k}}(A)=0$ for all bounded open sets $A\subset X$, therefore $\text{supp}(\mu_{p})\subset X(\infty)$.
Moreover, we can show that $\{\mu_{p}\}_{p \in X}$ is an $h$-dimensional Busemann density.
We should emphasis that the different choices of $\{s_{k}\}$ may lead different $weak^{\star}$ limits. But in the case of rank 1 manifolds without focal points, we will show that the the Busemann density is unique, i.e., independent of $\{s_{k}\}$.

In order to show $\{\mu_{p}\}_{p \in X}$ constructed above is an Busemann density, we need the following two results about Busemann function.

\begin{proposition}\label{pro3.18}
For each pair of points $p,q\in X$, the map
$$\beta_{p,q}:X(\infty)\rightarrow \mathbb{R},~~\xi\mapsto \beta_{p,q}(\xi):=b_{p}(q,\xi)$$
is continuous.
\end{proposition}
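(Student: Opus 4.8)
The plan is to show continuity of $\xi \mapsto b_p(q,\xi)$ by exhibiting $b_p(q,\cdot)$ as a locally uniform limit of continuous functions. Recall that $b_p(q,\xi) = \lim_{t\to+\infty}\{d(q,\gamma_{p,\xi}(t)) - t\}$, and that for each fixed $t$ the map $\xi \mapsto d(q,\gamma_{p,\xi}(t)) - t$ is continuous: indeed, by Proposition \ref{pro2.4} the correspondence $\xi \mapsto \gamma'_{p,\xi}(0) \in S_pX$ is a homeomorphism, the geodesic flow is continuous, and $d(q,\cdot)$ is continuous, so $\xi \mapsto \gamma_{p,\xi}(t)$ is continuous into $X$. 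Hence it suffices to prove that the convergence $d(q,\gamma_{p,\xi}(t)) - t \to b_p(q,\xi)$ as $t\to+\infty$ is uniform in $\xi$ over $X(\infty)$ (which is compact in the cone topology). A uniform limit of continuous functions on a compact space is continuous, and we are done.

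To get the uniform convergence, fix $\xi$ and write $f_\xi(t) := d(q,\gamma_{p,\xi}(t)) - t$. As noted in the text, $f_\xi$ is non-increasing and bounded above by $d(p,q)$, hence converges; I want a modulus of convergence independent of $\xi$. The key geometric input is Lemma \ref{Ka} (or Lemma \ref{OS}) controlling $d(\gamma_{p,\xi}(s),\gamma_{q,\xi'}(s))$ for nearby rays. Concretely: for $t' > t$, $f_\xi(t) - f_\xi(t') = d(q,\gamma_{p,\xi}(t)) - d(q,\gamma_{p,\xi}(t')) + (t'-t)$. Let $\sigma$ be the geodesic ray from $q$ to $\gamma_{p,\xi}(t')$; then $d(q,\gamma_{p,\xi}(t')) = |\sigma|$ and $d(q,\gamma_{p,\xi}(t)) \le d(q,\sigma(\text{something})) + d(\sigma(\cdot),\gamma_{p,\xi}(t))$. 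The standard estimate in the no-focal-points setting shows $f_\xi(t) - f_\xi(\infty) \le 2\,d(q,\gamma_{p,\xi}(t)) - 2t + 2\cdot(\text{bounded})$; more efficiently, one uses that $d(q,\gamma_{p,\xi}(t)) - t$ and $d(p,\gamma_{p,\xi}(t)) - t = 0$ differ by at most $d(p,q)$, and the tail bound $0 \le f_\xi(t) - b_p(q,\xi)$ can be estimated by comparing the ray $\gamma_{p,\xi}$ with the ray from $q$ asymptotic to $\xi$, whose divergence is controlled uniformly because all the footpoints involved lie in a bounded ball around $p$ and $q$. Since $p,q$ are fixed, the comparison constant depends only on $d(p,q)$ and not on $\xi$, giving a uniform rate.

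I expect the main obstacle to be making the tail estimate genuinely uniform in $\xi$: in nonpositive curvature one would invoke convexity of $t\mapsto d(q,\gamma_{p,\xi}(t))$, but here we only have the ``no maxima'' / monotonicity substitutes from Lemma \ref{OS} and the Lipschitz-type bound of Lemma \ref{Ka}. The remedy is to route the argument through the ray $\gamma_{q,\xi}$ from $q$ to $\xi$ (which exists and is unique by Proposition \ref{pro2.4}): positively asymptotic geodesics have non-increasing distance (Lemma \ref{OS}(2)), so $d(\gamma_{p,\xi}(t),\gamma_{q,\xi}(t)) \le d(\gamma_{p,\xi}(0),\gamma_{q,\xi}(0)) + (\text{const}) \le 2d(p,q) + \text{const}$ for all $t \ge 0$; combined with $|b_p(q,\xi) - (d(q,\gamma_{p,\xi}(t)) - t)| \le$ (this divergence controlled quantity) $\to 0$, and the bound being uniform once one checks the implied constant only involves $d(p,q)$. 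An alternative, cleaner route: show directly that the family $\{f_\xi\}_{\xi}$ is equicontinuous in $\xi$ for each fixed large $t$ and uniformly bounded, then apply that $f_\xi(t)$ decreases to $b_p(q,\cdot)$ pointwise with $f_\xi(t) - f_\xi(t+1) \le$ a quantity that tends to $0$ uniformly (Dini-type argument on the compact space $X(\infty)$ would then even upgrade pointwise monotone convergence to uniform convergence, bypassing the explicit rate entirely). I would present the Dini-type argument as the main line, since it only needs: (i) $X(\infty)$ compact, (ii) each $f_{(\cdot)}(t)$ continuous, (iii) $t\mapsto f_\xi(t)$ monotone for each $\xi$, all of which are already available.
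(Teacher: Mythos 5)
There is a genuine gap, and it sits exactly where the hard content of the proposition lives. Your ``main line'' is a Dini-type argument, but Dini's theorem has the continuity of the pointwise limit as a \emph{hypothesis}, not a conclusion: a decreasing net of continuous functions on a compact space need only converge to an upper semicontinuous limit (compare $x\mapsto x^{n}$ on $[0,1]$). Since $f_{\xi}(t)=d(q,\gamma_{p,\xi}(t))-t$ decreases to $b_{p}(q,\xi)$, monotonicity plus compactness of $X(\infty)$ gives you upper semicontinuity of $\beta_{p,q}$ for free, but the lower semicontinuity is precisely what you cannot get this way; invoking Dini to ``bypass the explicit rate'' is circular.

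Your fallback --- a modulus of convergence for $f_{\xi}(t)\to b_{p}(q,\xi)$ uniform in $\xi$ --- would indeed suffice, but it is never actually produced. The triangle-inequality manipulations you sketch only yield $f_{\xi}(t)-f_{\xi}(s)\le 2d(p,q)$, which does not tend to $0$, and the claim that ``the comparison constant depends only on $d(p,q)$'' is exactly the uniform divergence/asymptoticity estimate that is easy under convexity (nonpositive curvature) but nontrivial without focal points. This is why the paper does not argue via uniform convergence at all: it characterizes $b_{p}(q,\xi)$ as the parameter at which $\gamma_{p,\xi}$ meets the horosphere $H_{\xi}(q)$ perpendicularly (citing Ruggiero), and then quotes Theorem \ref{th3.15} (continuity of horospheres in their centers, again Ruggiero) together with Proposition \ref{pro2.4} to conclude. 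The external input encapsulated in Theorem \ref{th3.15} is the missing ingredient in your argument; without it (or an equivalent uniform divergence statement, which you would have to prove), neither branch of your proposal closes.
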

\begin{proof} We know that if $t=b_{p}(q,\xi)$ for some $t\in \mathbb{R}$, then the horosphere passing through $q$ and centered at $\xi$ will intersect the geodesic
$\gamma_{p,\xi}$ perpendicularly at $\gamma_{p,\xi}(-t)$ (cf.~\cite{Ru2} Lemma 4.2).

Proposition \ref{pro2.4} implies that for any $p\in X$, and any $\{\xi_{n}\}\subset X(\infty)$ with $\lim_{n\rightarrow +\infty}\xi_{n}=\xi$,
we have $\lim_{n\rightarrow +\infty}\gamma'_{p,\xi_{n}}(0)=\gamma'_{\xi}(0)$. Therefore, if we fix a point $p \in X$,
then the geodesic $\gamma_{p,\xi}$ depends continuously on $\xi$,
Moreover, Theorem \ref{th3.15} tells us that, for the manifold without focal points, the horospheres depend continuously on their centers. This implies that the intersection points of the geodesic $\gamma_{p,\xi}$ and the horosphere $H_{\xi}(q)$ depends continuously on $\xi\in X(\infty)$.
Then by our explanation in the last paragraph, we know the Busemann function $b_{p}(q,\xi)$ depends continuously on $\xi$.
\end{proof}

\begin{proposition}\label{pro3.19}
For each pair of points $p,q\in X$, if there is a sequence $\{x_{n}\}\subset X$ with $\lim_{n\rightarrow +\infty}x_{n}=\xi$, then $\lim_{n\rightarrow +\infty}\{d(q,x_{n})-d(p,x_{n})\}=b_{p}(q,\xi)$.
\end{proposition}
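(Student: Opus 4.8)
The plan is to prove $\liminf_n a_n \ge b_p(q,\xi) \ge \limsup_n a_n$ for $a_n := d(q,x_n) - d(p,x_n)$, which forces $\lim_n a_n = b_p(q,\xi)$. Since $x_n \to \xi \in X(\infty)$, the points $x_n$ leave every bounded subset of $X$, so $t_n := d(p,x_n) \to +\infty$; as $X$ is simply connected without conjugate points its geodesics minimize, so with $v_n := \gamma'_{p,x_n}(0) \in S_pX$ we have $x_n = \gamma_{v_n}(t_n)$ and $a_n = d(q,\gamma_{v_n}(t_n)) - t_n$. The first step is to observe $v_n \to v := \gamma'_{p,\xi}(0)$: by compactness of $S_pX$ any subsequence of $\{v_n\}$ has a sub-subsequence $v_{n_k}\to v'$, and Proposition \ref{pro1} applied with $t_{n_k}\to+\infty$ gives $\gamma_{v'}(+\infty)=\lim_k \gamma_{v_{n_k}}(t_{n_k}) = \lim_k x_{n_k} = \xi = \gamma_v(+\infty)$, whence $v'=v$ by Lemma \ref{OS}(1); so $v_n \to v$.

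For the lower bound I would use that, for each fixed $n$, the function $s\mapsto d(q,\gamma_{v_n}(s)) - s$ is non-increasing in $s\ge 0$ (triangle inequality) and tends as $s\to+\infty$ to $b_p(q,\gamma_{v_n}(+\infty))$, this being exactly the definition of the Busemann function since $\gamma_{v_n}$ is the ray from $p$ to $\gamma_{v_n}(+\infty)$. Evaluating at $s=t_n$ gives $a_n \ge b_p(q,\gamma_{v_n}(+\infty))$. Since $v_n\to v$, Proposition \ref{pro2.4} gives $\gamma_{v_n}(+\infty)\to\gamma_v(+\infty)=\xi$, and then Proposition \ref{pro3.18} (continuity of $\eta\mapsto b_p(q,\eta)$) yields $b_p(q,\gamma_{v_n}(+\infty))\to b_p(q,\xi)$; hence $\liminf_n a_n \ge b_p(q,\xi)$.

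For the upper bound, fix $T>0$. For $n$ large enough that $t_n > T$, the point $\gamma_{v_n}(T)$ lies on the minimizing segment from $p$ to $x_n$, so $d(\gamma_{v_n}(T),x_n)=t_n-T$, and the triangle inequality gives $a_n = d(q,x_n) - t_n \le d(q,\gamma_{v_n}(T)) - T$. Letting $n\to+\infty$ and using $v_n\to v$ together with continuity of the geodesic flow, $\gamma_{v_n}(T)\to\gamma_v(T)=\gamma_{p,\xi}(T)$, so $\limsup_n a_n \le d(q,\gamma_{p,\xi}(T)) - T$; letting $T\to+\infty$, the right-hand side decreases to $b_p(q,\xi)$, giving $\limsup_n a_n \le b_p(q,\xi)$. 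Combined with the previous paragraph this finishes the proof.

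I do not expect a genuinely hard step: the argument is bookkeeping with the triangle inequality plus the continuity statements already established. The one point worth care is the lower bound — one must compare $a_n$ with the Busemann value $b_p(q,\gamma_{v_n}(+\infty))$ along the ray from $p$ that actually points toward $x_n$, rather than with the Busemann function along the fixed ray $\gamma_{p,\xi}$ (to which $x_n$ need not stay close). Doing it this way is exactly what lets one avoid appealing to the symmetry $b_p(q,\xi)=-b_q(p,\xi)$ of the Busemann cocycle, and it is here that Propositions \ref{pro1}, \ref{pro2.4} and \ref{pro3.18} — and, through them, the no-focal-points hypothesis (Lemma \ref{OS} and the continuity of horospheres, Theorem \ref{th3.15}) — get used.
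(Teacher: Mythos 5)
Your proof is correct and follows essentially the same route as the paper's: both arguments rest on showing $v_n=\gamma'_{p,x_n}(0)\to\gamma'_{p,\xi}(0)$, on the monotonicity of $s\mapsto d(q,\gamma_{v_n}(s))-s$ along the rays from $p$, and on the continuity of $\eta\mapsto b_p(q,\eta)$ from Proposition \ref{pro3.18}. The only cosmetic difference is that you split the estimate into a $\liminf$ and a $\limsup$ bound (with the upper bound obtained directly from $d(q,\gamma_v(T))-T\downarrow b_p(q,\xi)$), whereas the paper runs a single $\epsilon/3$ argument through $b_p(q,\xi_n)$.
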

\begin{proof}
Since $\lim_{n\rightarrow +\infty}x_{n}=\xi$, then by passing to a subsequence if necessary,
we can assume that $x_{n} \in TC(v,\frac{1}{n},n)$,
where $v=\gamma'_{p,\xi}(0)$ and $TC(v,\frac{1}{n},n)$ is the truncated cone introduced in Section \ref{geometric}.
Let $v_{n}=\gamma'_{p,x_{n}}(0)$, then we know that the angle between $v$ and $v_{n}$ is smaller than $\frac{1}{n}$.
This implies $\lim_{n\rightarrow +\infty}v_{n}=v$. Let $\xi_{n}=\gamma_{v_{n}}(+\infty)$,
by Proposition \ref{pro2.4}, $\lim_{n\rightarrow +\infty}\xi_{n}=\xi$.

Fix a point $\xi \in X(\infty)$, let $a=b_{p}(q,\xi)$. By Proposition \ref{pro3.18},
$\lim_{n\rightarrow +\infty}\xi_{n}=\xi$ implies that for any $\epsilon > 0$, there exists $N \in \mathbb{Z}^{+}$,
such that for all $n > N$, we have
$$\mid b_{p}(q,\xi_{n})- b_{p}(q,\xi)\mid < \frac{\epsilon}{3}.$$

Since the function $t\mapsto d(q,\gamma_{w}(t))-t$ is nonincreasing, we know that
for any $\epsilon > 0$, there exists a constant $T \in \mathbb{R}$, such that
$$\mid (d(q,\gamma_{v}(t))-t)- b_{p}(q,\xi)\mid < \frac{\epsilon}{3},~~\forall~t>T.$$
Now we fix the number $T$. Since $\lim_{n\rightarrow +\infty}v_{n}=v$,
we know that there exists $N_{1} \geq \max \{T,N\}$, such that
$$\mid d(\gamma_{v_{n}}(t), \gamma_{v}(t))\mid < \frac{\epsilon}{3}, ~~\forall~t \in [0,T]~\mbox{and}~n>N_1.$$
Thus $\forall n > N_{1}$, we have
$$ \mid (d(q,\gamma_{v_{n}}(T))-d(p,\gamma_{v_{n}}(T)) )  -  (d(q,\gamma_{v}(T))-d(p,\gamma_{v}(T)))  \mid
\leq 2\mid d(\gamma_{v_{n}}(T),\gamma_{v}(T))\mid \leq  \frac{2\epsilon}{3}.$$

\vspace{-1ex}
By the nonincreasing property of the function $t\mapsto d(q,\gamma_{w}(t))-t$, we know that for all $n \geq N_{1}$,
\begin{eqnarray*}
\mid (d(q,x_{n})-d(p,x_{n}))-b_{p}(q,\xi_{n})\mid
&   =  & \mid (d(q,\gamma_{v_{n}}(d(p,x_{n}))-d(p,\gamma_{v_{n}}(d(p,x_{n})))-b_{p}(q,\xi_{n})\mid\\
& \leq & \mid (d(q,\gamma_{v_{n}}(T)-d(p,\gamma_{v_{n}}(T))-b_{p}(q,\xi_{n})\mid.
\end{eqnarray*}
Thus
\begin{eqnarray*}
\mid (d(q,x_{n})-d(p,x_{n}))-b_{p}(q,\xi)\mid
& \leq & \mid (d(q,\gamma_{v_{n}}(T))-d(p,\gamma_{v_{n}}(T)))-(d(q,\gamma_{v}(T))-d(p,\gamma_{v}(T)))\mid \\
& + & \mid (d(q,\gamma_{v}(T)-d(p,\gamma_{v}(T))-b_{p}(q,\xi_{n})\mid\\
& \leq & \frac{2\epsilon}{3}+\frac{\epsilon}{3}=\epsilon.
\end{eqnarray*}
We are done with the proof.
\end{proof}

\begin{proposition}\label{pro10}
$\{\mu_{p}\}_{p \in X}$ is an h-dimensional Busemann density where $h=h_{\text{top}}(g)$.
\end{proposition}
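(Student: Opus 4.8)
The plan is to check the two conditions of Definition~\ref{def3} with $r=h$ for the measures $\mu_{p}=\lim_{s_{k}\downarrow h}\mu_{p,s_{k}}$, the main idea being to reduce everything to the single base point $p_{0}$ through an exact identity relating $\mu_{p,s}$ to $\mu_{p_{0},s}$. For fixed $p,q\in X$, define $g_{p,q}\colon\overline{X}\to\R$ by $g_{p,q}(x)=d(q,x)-d(p,x)$ for $x\in X$ and $g_{p,q}(\xi)=b_{p}(q,\xi)$ for $\xi\in X(\infty)$. The triangle inequality gives $|g_{p,q}|\leq d(p,q)$, and Propositions~\ref{pro3.18} and~\ref{pro3.19}, together with the evident continuity of $x\mapsto d(q,x)-d(p,x)$ on $X$ and a routine passage to subsequences for sequences in $\overline{X}$ mixing interior and boundary points, show that $g_{p,q}$ is continuous on the compact space $\overline{X}$. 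The key observation is the exact identity $\mu_{p,s}=e^{-s\,g_{p_{0},p}}\cdot\mu_{p_{0},s}$ for every $s>h$, which is immediate from the definitions after writing $e^{-s\,d(p,\alpha p_{0})}=e^{-s\,g_{p_{0},p}(\alpha p_{0})}\,e^{-s\,d(p_{0},\alpha p_{0})}$.

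Each $\mu_{p_{0},s}$ is a probability measure on the compact space $\overline{X}$, so the family $\{\mu_{p_{0},s}\}_{s>h}$ is $weak^{\star}$ precompact; we fix $s_{k}\downarrow h$ with $\mu_{p_{0},s_{k}}\to\mu_{p_{0}}$, where $\text{supp}\,\mu_{p_{0}}\subset X(\infty)$ as already observed in the construction. Since $e^{-s_{k}g_{p_{0},p}}\to e^{-h\,g_{p_{0},p}}$ uniformly on $\overline{X}$ and the $\mu_{p_{0},s_{k}}$ have uniformly bounded total mass, the identity above forces $\mu_{p,s_{k}}\to e^{-h\,g_{p_{0},p}}\mu_{p_{0}}=:\mu_{p}$ in the $weak^{\star}$ topology, for \emph{every} $p\in X$ along the \emph{same} sequence $s_{k}$; moreover $\text{supp}\,\mu_{p}=\text{supp}\,\mu_{p_{0}}\subset X(\infty)$ because $e^{-h\,g_{p_{0},p}}$ is bounded above and below by positive constants, and $\mu_{p}$ is a finite Borel measure of mass at most $e^{h\,d(p_{0},p)}$.

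For condition (1), from $\mu_{q}=e^{-h\,g_{p_{0},q}}\mu_{p_{0}}$ and $\mu_{p}=e^{-h\,g_{p_{0},p}}\mu_{p_{0}}$ we obtain, for $\mu_{p}$-a.e.\ $\xi\in X(\infty)$, that $\frac{d\mu_{q}}{d\mu_{p}}(\xi)=e^{-h\,(g_{p_{0},q}(\xi)-g_{p_{0},p}(\xi))}=e^{-h\,(b_{p_{0}}(q,\xi)-b_{p_{0}}(p,\xi))}$. It remains to prove the cocycle relation $b_{p_{0}}(q,\xi)-b_{p_{0}}(p,\xi)=b_{p}(q,\xi)$. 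Since the defining limits of $b_{p_{0}}(q,\xi)$ and $b_{p_{0}}(p,\xi)$ each exist, their difference equals $\lim_{t\to+\infty}(d(q,\gamma_{p_{0},\xi}(t))-d(p,\gamma_{p_{0},\xi}(t)))$, and as $\gamma_{p_{0},\xi}(t)\to\xi$ in $\overline{X}$ by Proposition~\ref{pro1}, Proposition~\ref{pro3.19} identifies this limit with $b_{p}(q,\xi)$; hence $\frac{d\mu_{q}}{d\mu_{p}}(\xi)=e^{-h\,b_{p}(q,\xi)}$.

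For condition (2), every $\alpha\in\Gamma$ extends to a homeomorphism of $\overline{X}$ in the cone topology, and a direct reindexing of the sum over $\Gamma$ --- using that $\alpha$ is an isometry of $X$ and that the normalising constant $\sum_{\beta\in\Gamma}e^{-s\,d(p_{0},\beta p_{0})}$ does not depend on the base point --- yields $\alpha_{*}\mu_{p,s}=\mu_{\alpha p,s}$ for all $s>h$; passing to the limit along $s_{k}$ and invoking uniqueness of $weak^{\star}$ limits gives $\alpha_{*}\mu_{p}=\mu_{\alpha p}$, i.e.\ $\mu_{\alpha p}(\alpha A)=\mu_{p}(A)$ for all Borel $A\subset X(\infty)$. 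I expect the one genuinely delicate point to be the interplay in the second paragraph between the $weak^{\star}$ convergence of $\mu_{p_{0},s_{k}}$ and the uniform convergence of $e^{-s_{k}g_{p_{0},p}}$ --- in particular, ensuring that one sequence $s_{k}$ serves all base points simultaneously and that the supports stay on $X(\infty)$; the identity $\mu_{p,s}=e^{-s\,g_{p_{0},p}}\mu_{p_{0},s}$ is precisely what makes this bookkeeping straightforward, and it also delivers the continuity inputs from Propositions~\ref{pro3.18}--\ref{pro3.19} in exactly the needed form.
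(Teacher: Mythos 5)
Your proof is correct, and for condition (1) it takes a genuinely cleaner route than the paper's. The paper verifies the Radon--Nikodym relation by invoking the duality property (Proposition \ref{pro3}) to produce orbit points $\alpha_k p_0\to\xi$, computing the ratio of the weights of $\mu_{q,s}$ and $\mu_{p,s}$ on small balls $B(k)$ around $\alpha_k p_0$, and then passing to a limit in which the index $k$ simultaneously drives $s_k\downarrow h$ and $\alpha_k p_0\to\xi$ --- a double limit that is left somewhat informal. Your exact identity $\mu_{p,s}=e^{-s\,g_{p_0,p}}\mu_{p_0,s}$, combined with the continuity of $g_{p_0,p}$ on $\overline{X}$ (which is precisely what Propositions \ref{pro3.18} and \ref{pro3.19} supply) and the elementary fact that uniform convergence of densities together with $weak^{\star}$ convergence of the base measures yields $weak^{\star}$ convergence of the products, sidesteps both the duality property and the double-limit issue entirely; it also yields the extra dividend that a single sequence $s_k$ produces the limit measures for all base points at once, with $\mu_p=e^{-h\,b_{p_0}(p,\cdot)}\mu_{p_0}$ on $X(\infty)$, from which the cocycle identity $b_{p_0}(q,\xi)-b_{p_0}(p,\xi)=b_p(q,\xi)$ (correctly derived from Proposition \ref{pro3.19} applied along $\gamma_{p_0,\xi}$) finishes condition (1). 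Your verification of condition (2) by reindexing the Poincar\'e sum is identical to the paper's. The only caveat, which applies equally to the paper's own proof, is that the argument as written presumes the divergent-type normalisation so that $\mathrm{supp}\,\mu_{p_0}\subset X(\infty)$; the convergent case requires Patterson's weighting, as noted at the end of the subsection.
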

\begin{proof}
We shall prove that $\{\mu_{p}\}_{p \in X}$ satisfies the two properties in the Definition \ref{def3}.

By Proposition \ref{pro3}, for any $\xi\in X(\infty)$, there exists a sequence $\{\alpha_{k}\}\subset \Gamma$ such that for any point $p_{0}\in X$,
$\xi = \lim_{k\rightarrow +\infty}\alpha_{k}p_{0}$.
For each $p,q \in X$, we compute the ratio of the coefficient of $\delta_{\alpha_{k}p_{0}}$ of measures $\mu_{q,s}$ and $\mu_{p,s}$,
by the Proposition \ref{pro3.19}
$$\frac{e^{-s\cdot d(q,\alpha_{k}p_{0})}}{e^{-s\cdot d(p,\alpha_{k}p_{0})}}=
e^{-s\cdot (d(q,\alpha_{k}p_{0})-d(p,\alpha_{k}p_{0}))}\rightarrow e^{-s\cdot b_{p}(q,\xi)},\ \text{as}\ k\rightarrow +\infty.$$
Since the discrete subgroup $\Gamma$ satisfying that $X/\Gamma$ is compact, we know
2a:=$\inf\{d(x,\alpha x)\mid x \in X, \alpha \in \Gamma\} > 0.$
For each $k \in \mathbb{Z}^{+}$, we denote by $B(k)$ the ball centered at $\alpha_{k}p_{0}$ with radius $a$,
then for each $s > h$,
$$\frac{\mu_{q,s}}{\mu_{p,s}}(B(k))=\frac{e^{-s\cdot d(q,\alpha_{k}p_{0})}}{e^{-s\cdot d(p,\alpha_{k}p_{0})}}.$$
This implies that
$$\frac{d\mu_{q}}{d \mu_{p}}(\xi)=\lim_{k\rightarrow +\infty}\frac{\mu_{q,s_{k}}}{\mu_{p,s_{k}}}(B(k))=e^{-h \cdot b_{p}(q,\xi)}.$$

In fact, for any $s > h$, we have
\begin{eqnarray*}
\mu_{\alpha p,s}
&=& \frac{\sum_{\beta \in \Gamma}e^{-s\cdot d(\alpha p,\beta p_{0})}\delta_{\beta p_{0}}}{P(s,p_{0},p_{0})}
=\frac{\sum_{\beta \in \Gamma}e^{-s\cdot d(\alpha p,\alpha\beta p_{0})}\delta_{\alpha\beta p_{0}}}{P(s,p_{0},p_{0})}\\
&=& \frac{\sum_{\beta \in \Gamma}e^{-s\cdot d(p,\beta p_{0})}\delta_{\alpha\beta p_{0}}}{P(s,p_{0},p_{0})}.
\end{eqnarray*}
So for any Borel measurable subset $B \subset \overline{X}=X \cup X(\infty)$, we have $\mu_{\alpha p, s}(\alpha B) = \mu_{p,s}(B)$. Therefore, for any Borel measurable subset $A \subset X(\infty)$,
$\mu_{p}(A) = \mu_{\alpha p}(\alpha A)$.
\end{proof}
We call a Busemann density constructed as above a \emph{Patterson-Sullivan measure}.

\begin{proposition}\label{pro11}
The total mass of the measures $\{\mu_{p}\}_{p \in X}$ is uniformly bounded from above and below.
\end{proposition}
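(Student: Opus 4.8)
The plan is to derive the uniform bound from the per-point estimate already recorded in property~(1) of $\mu_{p,s}$, by exploiting the $\Gamma$-equivariance of the family $\{\mu_p\}$ together with the cocompactness of $\Gamma$.

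First I would pass property~(1) to the limit. Since $\overline{X}$ is compact in the cone topology, the constant function $1$ belongs to $C(\overline{X})$, so the weak$^\star$ convergence $\mu_{p,s_k}\to\mu_p$ used to define $\mu_p$ gives $\mu_{p,s_k}(\overline{X})\to\mu_p(\overline{X})$; and because $\mbox{supp}\,\mu_p\subset X(\infty)$ (as was observed when $\mu_p$ was constructed), this limit equals $\mu_p(X(\infty))$, the total mass in the sense of Definition~\ref{def3}. Feeding in the bounds $e^{-s_k\,d(p,p_0)}\le\mu_{p,s_k}(\overline{X})\le e^{s_k\,d(p,p_0)}$ from property~(1) and letting $s_k\downarrow h$ yields the per-point estimate
$$e^{-h\,d(p,p_0)}\ \le\ \mu_p(X(\infty))\ \le\ e^{h\,d(p,p_0)}\qquad\text{for every }p\in X.$$

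Next I would remove the dependence on $d(p,p_0)$, which is unbounded over $X$. Applying the $\Gamma$-equivariance $\mu_{\alpha p}(\alpha A)=\mu_p(A)$ with $A=X(\infty)$, and noting that each $\alpha\in\Gamma$ extends to a homeomorphism of $\overline{X}$ fixing $X(\infty)$ setwise, shows that $p\mapsto\mu_p(X(\infty))$ is a $\Gamma$-invariant function on $X$. Now put $D:=\mbox{diam}(M)$. Given $p\in X$, lift a minimizing geodesic in $M$ from $\mathfrak{C}(p_0)$ to $\mathfrak{C}(p)$ starting at $p_0$; its endpoint lies in the fiber of $\mathfrak{C}(p)$, hence equals $\alpha p$ for some $\alpha\in\Gamma$, and $d(p_0,\alpha p)\le D$. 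Then $\Gamma$-invariance together with the per-point estimate at the point $\alpha p$ gives
$$e^{-hD}\ \le\ e^{-h\,d(p_0,\alpha p)}\ \le\ \mu_{\alpha p}(X(\infty))\ =\ \mu_p(X(\infty))\ \le\ e^{h\,d(p_0,\alpha p)}\ \le\ e^{hD},$$
which is the asserted two-sided bound, with constants depending only on $h=h_{\text{top}}(g)$ and $\mbox{diam}(M)$.

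The argument is routine once the pieces are assembled; the only step requiring a moment's care is that the total mass survives the weak$^\star$ limit, and the reason no mass leaks away is precisely that one works on the compact compactification $\overline{X}$ while $\mbox{supp}\,\mu_p\subset X(\infty)$. Everything else reduces to the triangle inequality and the existence of a compact fundamental domain for $\Gamma$.
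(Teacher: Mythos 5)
Your proposal is correct and follows essentially the same route as the paper: both derive the bound $e^{-s\,d(p,p_0)}\le\mu_{p,s}(\overline{X})\le e^{s\,d(p,p_0)}$ into a bound on $\mu_p(X(\infty))$, then use $\Gamma$-equivariance (with $\alpha X(\infty)=X(\infty)$) and cocompactness to reduce to points within bounded distance of $p_0$. The only cosmetic differences are that the paper works with a compact fundamental domain of diameter $K$ and keeps $s\in(h,h+1]$ to get constants $e^{\pm(h+1)K}$, whereas you pass to the limit $s_k\downarrow h$ first (with a slightly more explicit justification that the total mass survives the weak$^\star$ limit) and use $\mathrm{diam}(M)$.
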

\begin{proof}
Take a compact fundamental domain $\mathcal{F}$ containing $p_{0}$. Let $K=\text{diam}(\mathcal{F}) < +\infty$.
Since $$e^{-s\cdot d(p, p_{0})} \leq \mu_{p,s}(\overline{X}) \leq e^{s\cdot d(p, p_{0})},$$
for any point $p\in\mathcal{F}$ and $s\in (h, h+1]$, then
$$e^{-(h+1)K}\leq \mu_{p,s}(\overline{X}) \leq e^{(h+1)K}.$$
Thus for any point $p\in\mathcal{F}$,
$$e^{-(h+1)K}\leq \mu_{p}(X(\infty)) \leq e^{(h+1)K}.$$

Now for any point $p \in X$, there exists $\alpha \in \Gamma$ such that $\alpha p \in \mathcal{F}$. So we have
\begin{itemize}
\item{} ~~$\mu_{p}(X(\infty))=\mu_{\alpha p}(\alpha X(\infty))\leq e^{(h+1)K}$;
\item{} ~~$e^{-(h+1)K}\leq \mu_{\alpha p}(X(\infty)) = \mu_{p}(\alpha^{-1} X(\infty))= \mu_{p}(X(\infty))$.
\end{itemize}
Therefore $$e^{-(h+1)K}\leq \mu_{p}(X(\infty))\leq e^{(h+1)K},$$ which means that $\mu_{p}(X(\infty))$ is uniformly bounded from above and below.
\end{proof}

The next proposition shows that a Busemann density has full support equal to $X(\infty)$.
\begin{proposition}\label{pro12}
For any $p \in X$, $\emph{\text{supp}}(\mu_{p})=X(\infty)$.
\end{proposition}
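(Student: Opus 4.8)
The plan is to show that $\text{supp}(\mu_p)$ is a nonempty, closed, $\Gamma$-invariant subset of $X(\infty)$, and then to conclude with the minimality of the $\Gamma$-action on $X(\infty)$ established in Proposition \ref{pro8}. Since $\text{supp}(\mu_p)$ is closed by definition, and since Proposition \ref{pro11} shows $\mu_p(X(\infty))$ is bounded below by a positive constant (so $\mu_p\neq 0$), the set $\text{supp}(\mu_p)$ is automatically nonempty; thus the crux is the $\Gamma$-invariance.

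First I would observe that all the measures in the family $\{\mu_p\}_{p\in X}$ have one and the same support. Indeed, by Proposition \ref{pro10} the family is an $h$-dimensional Busemann density, so condition (1) of Definition \ref{def3} gives $\frac{d\mu_q}{d\mu_p}(\xi)=e^{-h\, b_p(q,\xi)}$ for all $p,q\in X$ and $\mu_p$-a.e.\ $\xi$; since $|b_p(q,\xi)|\le d(p,q)$ for every $\xi\in X(\infty)$, this density is bounded above and below by positive constants, hence $\mu_p$ and $\mu_q$ are mutually absolutely continuous with bounded densities and therefore $\text{supp}(\mu_q)=\text{supp}(\mu_p)$. Next, fixing $\alpha\in\Gamma$, condition (2) of Definition \ref{def3} gives $\mu_{\alpha p}(\alpha A)=\mu_p(A)$ for every Borel set $A\subset X(\infty)$, i.e.\ $\mu_{\alpha p}=\alpha_\ast\mu_p$, whence $\text{supp}(\mu_{\alpha p})=\alpha\big(\text{supp}(\mu_p)\big)$. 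Combining the two facts, $\alpha\big(\text{supp}(\mu_p)\big)=\text{supp}(\mu_{\alpha p})=\text{supp}(\mu_p)$, so $\text{supp}(\mu_p)$ is $\Gamma$-invariant.

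Finally, I would pick any $\xi\in\text{supp}(\mu_p)$; then $\Gamma\xi\subset\text{supp}(\mu_p)$, and since $\text{supp}(\mu_p)$ is closed, $\overline{\Gamma\xi}\subset\text{supp}(\mu_p)$. By Proposition \ref{pro8}, $\overline{\Gamma\xi}=X(\infty)$, hence $\text{supp}(\mu_p)=X(\infty)$. This argument carries no genuine difficulty of its own: the substantial input sits in the earlier results — the existence of a nonzero Busemann density with the stated transformation behaviour (Propositions \ref{pro10} and \ref{pro11}) and, above all, the minimality of the $\Gamma$-action on the ideal boundary (Proposition \ref{pro8}). The only point requiring mild care is the chaining of ``$\text{supp}(\mu_p)$ is independent of $p$'' with ``$\text{supp}(\mu_{\alpha p})=\alpha\,\text{supp}(\mu_p)$'' to extract the $\Gamma$-invariance of the support.
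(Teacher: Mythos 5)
Your proof is correct and uses exactly the same ingredients as the paper's — the $\Gamma$-equivariance and mutual equivalence of the $\mu_p$, the positive lower bound on the total mass (Proposition \ref{pro11}), and the minimality of the $\Gamma$-action (Proposition \ref{pro8}); the paper merely phrases the argument contrapositively, covering the complement of the support by null translates $\alpha U$ of a null neighborhood, whereas you argue directly that the support is a nonempty closed $\Gamma$-invariant set.
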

\begin{proof}
Suppose $\text{supp}(\mu_{p})\neq X(\infty)$, then there exists $\xi \in X(\infty)$ which is not in $\text{supp}(\mu_{p})$. Then we can find an open neighborhood $U$ of $\xi$ in $X(\infty)$ such that $\mu_{p}(U)=0$.
Note that Proposition \ref{pro8} shows that $\bigcup_{\alpha \in \Gamma}\alpha(U)=X(\infty)$,
so for any $\eta \in X(\infty)$, there is an element $\alpha \in \Gamma$ such that $\eta \in \alpha U$. By the $\Gamma$-invariance $\mu_{\alpha p}(\alpha U)=\mu_{p}(U)=0$. Since $\mu_{p}$ is equivalent to $\mu_{\alpha p}$, we have $\mu_{p}(\alpha U)=0$. Therefore $\text{supp}(\mu_{p}) = \emptyset$, which contradicts to Proposition \ref{pro11}.
\end{proof}

When $\Gamma$ is of convergent type, Patterson showed in \cite{Pat} that one can weight the Poincar\'e series with a positive monotone increasing function to make the refined Poincar\'e series diverge at $s=h$.
More precisely, we consider a positive monotonely increasing function $g:\mathbb{R}^{+} \rightarrow \mathbb{R}^{+}$ with
$\lim_{t\rightarrow +\infty}\frac{g(x+t)}{g(t)}=1$, $\forall x \in \mathbb{R}^{+}$,
such that the weighted Poincar\'e series $\widetilde{P}(s,p,p_{0}):=\sum_{\alpha\in \Gamma}g(d(p,\alpha p_{0}))e^{-s\cdot d(p,\alpha p_{0})}$
diverges at $s=\delta$.  For the construction of the function $g$, see \cite{Pat} Lemma 3.1. Then, for the weighted Poincar\'e series $\widetilde{P}(s,p,p_{0})$,
we can always construct a Busemann density by using the same method as above.

\subsection{Projections of geodesic balls to $X(\infty)$}\label{proj}

In this subsection, we will show some semi-local properties of a Busemann density $\{\mu_{p}\}_{p\in X}$. These properties will play key roles in our proof of main theorems in subsequent sections.

For each $\xi \in X(\infty)$ and $p \in X$, define the projections
\begin{itemize}
\item{} ~~$pr_{\xi}: X \rightarrow X(\infty), ~~q \mapsto \gamma_{\xi,q}(+\infty)$,
\item{} ~~$pr_{p}: X\backslash \{p\} \rightarrow X(\infty), ~~q \mapsto \gamma_{p,q}(+\infty)$,
\end{itemize}
where $\gamma_{\xi,q}$ is the geodesic satisfying $\gamma_{\xi,q}(-\infty)=\xi$ and $\gamma_{\xi,q}(0)=q$.

For $A \subset X$ and $p \in X$,
$$pr_{p}(A)=\{pr_{p}(q)\mid q \in A \backslash \{p\} \}.$$

\begin{proposition}\label{pro3.2}
The projection map
$$pr: \overline{X}\times X \setminus \mbox{diag} \rightarrow X(\infty),$$
$$(x,p)\mapsto pr(x,p):=pr_{x}(p)=\gamma_{x,p}(+\infty),$$
is continuous.
\end{proposition}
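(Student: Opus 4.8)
The plan is to factor $pr$ through the unit tangent bundle and then invoke the ``continuity to infinity'' of Proposition \ref{pro1}. For $(x,p)\in\overline{X}\times X\setminus\mbox{diag}$, let $u(x,p):=-\gamma'_{p,x}(0)\in S_pX$ be the unit vector at $p$ pointing forward along $\gamma_{x,p}$, i.e.\ away from $x$; this is well defined in both cases, $\gamma_{p,x}$ being the geodesic segment from $p$ to $x$ when $x\in X$ and the ray with $\gamma_{p,x}(+\infty)=x$ when $x\in X(\infty)$. One checks directly from the definitions that $pr(x,p)=\gamma_{x,p}(+\infty)=\gamma_{u(x,p)}(+\infty)$, so $pr$ is the composition of the assignment $(x,p)\mapsto u(x,p)$ with the map $v\mapsto\gamma_v(+\infty)$, and the latter is continuous by Proposition \ref{pro1}. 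Hence it suffices to prove that $(x,p)\mapsto u(x,p)$ is continuous from $\overline{X}\times X\setminus\mbox{diag}$ into $SX$.

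I would establish this by a subsequence argument. Let $(x_n,p_n)\to(x,p)$ in the domain and set $u_n:=u(x_n,p_n)\in S_{p_n}X$. Since $-u_n=\gamma'_{p_n,x_n}(0)$, the geodesic $\gamma_{-u_n}$ extends the ray from $p_n$ toward $x_n$, so $\gamma_{-u_n}(t_n)=x_n$, where $t_n:=d(p_n,x_n)$ if $x_n\in X$ and $t_n:=+\infty$ if $x_n\in X(\infty)$. As the $p_n$ eventually lie in a fixed compact neighbourhood $K\subset X$ of $p$ and $\{v\in SX:\pi(v)\in K\}$ is compact, every subsequence of $(u_n)$ has a further subsequence along which $u_n\to\bar u$ for some $\bar u\in S_pX$; along it $t_n\to t_\infty$ in $[-\infty,+\infty]$, where $t_\infty=d(p,x)$ if $x\in X$ and $t_\infty=+\infty$ if $x\in X(\infty)$ (the latter because $x_n$ eventually leaves every metric ball while the $p_n$ stay bounded). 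The continuity of $(v,t)\mapsto\gamma_v(t)$ on $SX\times[-\infty,+\infty]$ from Proposition \ref{pro1} then yields $x_n=\gamma_{-u_n}(t_n)\to\gamma_{-\bar u}(t_\infty)$, so $\gamma_{-\bar u}(t_\infty)=x$ since limits in $\overline{X}$ are unique. Combined with $\gamma_{-\bar u}(0)=p$ and the uniqueness of the geodesic through $p$ meeting $x$ — a geodesic segment when $x\in X$, and the geodesic with ideal endpoint $x$ furnished by Proposition \ref{pro2.4} (or Corollary \ref{cor2.5}) when $x\in X(\infty)$ — this forces $\gamma_{-\bar u}=\gamma_{p,x}$, whence $\bar u=u(x,p)$. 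Since every subsequence of $(u_n)$ thus has a sub-subsequence converging to the single value $u(x,p)$ and $SX$ is metrizable, $u_n\to u(x,p)$; composing with $v\mapsto\gamma_v(+\infty)$ finishes the proof.

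The case $x\in X$ amounts to the continuous dependence of the connecting geodesic segment on its endpoints and is routine. The main point — and the place where the no-focal-points geometry enters — is the case $x\in X(\infty)$: there the auxiliary points $x_n$ run off to infinity, the finite-time geodesic flow carries no information, and identifying the limiting direction $\bar u$ rests essentially on the continuity to infinity (Proposition \ref{pro1}) together with the injectivity of $v\mapsto\gamma_v(+\infty)$ on each $S_pX$ (Proposition \ref{pro2.4}). I expect this to be the only delicate step.
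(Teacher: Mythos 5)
Your proposal is correct, and it is worth noting that it is in fact tighter than the argument printed in the paper. Both proofs rest on the same two pillars: the continuity to infinity of $(v,t)\mapsto\gamma_v(t)$ (Proposition \ref{pro1}) and the uniqueness of the geodesic issuing from $p$ with a prescribed endpoint in $\overline{X}$ (Proposition \ref{pro2.4}, Corollary \ref{cor2.5}), and both factor $pr$ through the initial direction $-\gamma'_{p,x}(0)$. The difference is structural: the paper fixes $p$ and shows $x\mapsto pr(x,p)$ is continuous by citing Corollary \ref{cor2.5} and Proposition \ref{pro1}, then asserts that continuity in the second variable follows ``similarly'' and stops there --- i.e.\ it only verifies separate continuity in each variable, which does not by itself yield joint continuity of $pr$ on $\overline{X}\times X\setminus\mbox{diag}$. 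Your compactness--subsequence argument on the vectors $u(x_n,p_n)$, with the careful case split on whether the limit $x$ lies in $X$ or in $X(\infty)$ and the identification of the limiting direction via uniqueness of geodesics, establishes joint continuity directly and thus closes that gap. The price is a slightly longer argument (you cannot simply quote Corollary \ref{cor2.5}, since the footpoints $p_n$ move), but the payoff is that the statement as actually used later --- continuity in both variables simultaneously --- is genuinely proved.
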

\begin{proof}
First we prove that for each $p \in X$, the map $pr(\cdot,p): \overline{X}\setminus p \rightarrow X(\infty)$ is continuous.
Suppose $\{x_{n}\}\subset \overline{X}$ is a convergent sequence and $x=\lim_{n\rightarrow +\infty}x_{n}\in\overline{X}$.
Corollary \ref{cor2.5} implies that $\lim_{n\rightarrow +\infty}-\gamma'_{p,x_{n}}(0)=-\gamma'_{p,x}(0)$.
Then by Proposition \ref{pro1},
we get $\lim_{n\rightarrow +\infty}(\gamma_{-\gamma'_{p,x_{n}}(0)}(+\infty)=\gamma_{-\gamma'_{p,x}(0)}(+\infty)$,
i.e., $\lim_{n\rightarrow +\infty} pr(x_{n},p)=pr(x,p)$.

Similarly, we can prove that for each $x \in \overline{X}$, the map $pr(x,\cdot): X\setminus x \rightarrow X(\infty)$ is continuous.
We are done with the proof.
\end{proof}

The following important semi-local properties were first discovered by Knieper \cite{Kn0} (see also Knieper \cite{Kn1,Kn2,Kn3}) in the setting of rank $1$ manifolds with nonpositive curvature. Based on the results in Section \ref{geometric} and Subsection \ref{Busemann1},
we can extend them to rank $1$ manifolds without focal points by following Knieper's argument.

\begin{proposition}\label{pro13}
Let $X$ be a simply connected rank $1$ manifold without focal points which has a compact quotient, and $\{\mu_{p}\}_{p\in X}$ be an $h$-dimensional Busemann density on $X(\infty)$. Then there exist positive constants $R$ and $l$ such that for all $r>R$,
\begin{enumerate}
\item[(1)] $\mu_{p}(pr_{x}B(p,r))\geq l$, for all $p \in X$ and $x \in \overline{X}$,
where $B(p,r)$ is the open ball of radius $r$, centered at $p$;
\item[(2)] there is a positive constant $a=a(r)$ such that for any $x \in X \setminus \{p\}$ and $\xi=\gamma_{p,x}(-\infty)$,
we have $$\frac{1}{a} \cdot e^{-h \cdot d(p,x)} \leq \mu_{p}(pr_{\xi}B(x,r))\leq a\cdot e^{-h \cdot d(p,x)};$$
\item[(3)] there is a positive constant $b=b(r)$ such that for any $x \in X$,
$$\frac{1}{b} \cdot e^{-h \cdot d(p,x)} \leq \mu_{p}(pr_{p}B(x,r))\leq b\cdot e^{-h \cdot d(p,x)}.$$
\end{enumerate}
\end{proposition}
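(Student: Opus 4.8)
The plan is to follow Knieper's scheme closely, now that all the requisite geometric inputs from Section~\ref{geometric} are available. The three statements are semi-local in nature: each controls the $\mu_p$-mass of the ideal-boundary shadow of a ball, and the key mechanism is always the interplay between the \emph{divergence-type} behaviour of the Poincar\'e series (via the Patterson--Sullivan construction), the uniform upper/lower bounds on the total mass of $\{\mu_p\}$ from Proposition~\ref{pro11}, and the \emph{minimality} of the $\Gamma$-action on $X(\infty)$ from Proposition~\ref{pro8}, which guarantees that no boundary point is ``invisible'' to the measure (Proposition~\ref{pro12}).

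For part (1), I would argue by contradiction using compactness. Suppose no uniform lower bound $l$ exists for a fixed large $r$; then there are sequences $p_n\in X$, $x_n\in\overline X$ with $\mu_{p_n}(pr_{x_n}B(p_n,r))\to 0$. Using the $\Gamma$-equivariance of $\{\mu_p\}$ and the co-compactness of $\Gamma$, translate each $p_n$ into a fixed compact fundamental domain $\mathcal F$; passing to a subsequence, $p_n\to p$ and $x_n\to x\in\overline X$ (in the cone topology, which is compact by Corollary~\ref{cor2.5}). One must be slightly careful if $x=p$, but shrinking $r$ is not needed: for $r$ large the shadow $pr_{x_n}B(p_n,r)$ contains a uniform cone neighborhood of $pr_{x_n}(p_n)$ — this is where Proposition~\ref{pro2} (balls deep along a geodesic lie in a narrow cone, run in reverse: the ball $B(p_n,r)$ is ``seen'' from $x_n$ under a definite solid angle) and the continuity of $pr$ (Proposition~\ref{pro3.2}) enter. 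Then lower semicontinuity of mass on open sets under weak$^\star$ limits, combined with $\mathrm{supp}(\mu_p)=X(\infty)$ (Proposition~\ref{pro12}) and the uniform lower bound on total mass (Proposition~\ref{pro11}), forces the limiting shadow to carry positive mass bounded below independently of the subsequence, a contradiction.

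For parts (2) and (3), the engine is the explicit Radon--Nikodym formula $\frac{d\mu_q}{d\mu_p}(\xi)=e^{-h\,b_p(q,\xi)}$ together with the comparison between the Busemann cocycle and the distance function. Concretely, for $\xi$ in the shadow $pr_\xi B(x,r)$ (resp.\ $pr_pB(x,r)$), the quantity $b_p(x',\xi)$ for $x'$ near $x$ differs from $d(p,x)$ by a bounded error (bounded in terms of $r$ only), using the triangle inequality and the basic estimates on Busemann functions from Subsection~\ref{Busemann1} (Propositions~\ref{pro3.18} and \ref{pro3.19}); hence $\mu_p(pr_\xi B(x,r))$ is, up to a multiplicative constant $a(r)$, equal to $\mu_x(pr_\xi B(x,r))\cdot e^{-h\,d(p,x)}$, and the latter mass is bounded above and below by constants depending only on $r$ by part (1) applied at the base point $x$ (for the lower bound) and by the uniform total-mass bound (for the upper bound). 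The rank~$1$ hypothesis is used to ensure the relevant connecting geodesics exist and are unique (Propositions~\ref{pro5}, \ref{pro6} and Corollary~\ref{co2.11}), so that $pr_\xi$ is well-defined and behaves continuously on the sets in question.

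The main obstacle, as in Knieper's original argument, is part (2) and specifically the \emph{lower} bound $\mu_p(pr_\xi B(x,r))\geq \frac1a e^{-h\,d(p,x)}$: this requires knowing that the shadow of $B(x,r)$ from the ``antipodal'' direction $\xi=\gamma_{p,x}(-\infty)$ genuinely carries the full expected mass, which hinges on the fact that $\Gamma p_0$ is suitably \emph{uniformly spread} at infinity — this is precisely where minimality of the $\Gamma$-action (Proposition~\ref{pro8}) and the divergence type of the Poincar\'e series (so that the construction in Subsection~\ref{Busemann1} produces a genuine density supported on all of $X(\infty)$) are indispensable, and where the absence of convexity in the no-focal-points setting must be compensated by the no-maxima properties (Lemma~\ref{OS}) and by the semi-local connecting-geodesic results. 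I expect the bookkeeping of constants (keeping everything depending only on $r$, not on $p$ or $x$) to be the delicate but routine part, while the conceptual content is entirely carried by the geometric propositions already established.
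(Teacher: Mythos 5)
Your overall strategy is the paper's (Knieper's scheme), and parts (2) and (3) are sketched exactly as the paper proves them: write $\mu_p(pr_\xi B(x,r))=\int_{pr_\xi B(x,r)}e^{-h\,b_x(p,\eta)}\,d\mu_x(\eta)$, use $b_x(p,\eta)\le d(x,p)$ together with part (1) applied at the basepoint $x$ for the lower bound, and $|b_x(p,\eta)-d(x,p)|\le 2r$ together with the uniform total-mass bound (Proposition \ref{pro11}) for the upper bound. One small misattribution: divergence type of the Poincar\'e series plays no role here — the proposition is stated for an arbitrary $h$-dimensional Busemann density, and the only input from the construction is full support (Proposition \ref{pro12}), which is where minimality (Proposition \ref{pro8}) actually enters.

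The genuine gap is in part (1), at the step you describe as ``$B(p_n,r)$ is seen from $x_n$ under a definite solid angle,'' which you attribute to Proposition \ref{pro2} run in reverse. Proposition \ref{pro2} says that a ball of fixed radius centred far out along $\gamma_v$ is contained in a narrow cone $C(v,\epsilon)$; it bounds the angular size of a distant ball from \emph{above}, whereas part (1) needs the opposite: a \emph{lower} bound, namely that $pr_xB(p,r)$ contains an open subset of $X(\infty)$ of uniformly positive $\mu$-mass, uniformly over $p$ in a fundamental domain and $x\in\overline{X}$ (including $x\in X(\infty)$). In a genuine rank $1$ manifold this is exactly the delicate point: if $x$ is an endpoint of a flat and $B(p,r)$ sits inside that flat, the shadow can be degenerate unless $r$ is large enough to capture a rank $1$ geodesic. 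The paper's fix (Lemma \ref{mu pr Bxr} and the uniform-cone lemma following it) is to take a rank $1$ geodesic $c$ with $c(-\infty)=x$ (Corollary \ref{co2.11}) and apply Proposition \ref{pro6}: every geodesic from $x$ to a point $\eta$ in a fixed neighbourhood $V_\epsilon$ of $c(+\infty)$ passes within $\epsilon$ of $c(0)$, so once $r>d(p,c(0))+\epsilon$ one gets $V_\epsilon\subset pr_xB(p,r)$; compactness and the continuity of $pr$ (Proposition \ref{pro3.2}) then make the cone uniform, and the contradiction argument with full support yields the uniform mass $l$, extended to all $p$ by $\Gamma$-equivariance. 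So the rank $1$ connecting-geodesic results, which you invoke only for the well-definedness of $pr_\xi$ in (2)--(3), are in fact the engine of part (1); without them the ``definite solid angle'' claim is unjustified, and your subsequent weak$^\star$ semicontinuity step has nothing to apply to.
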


\begin{proof}
We prove item (1) first. Items (2) and (3) are consequences of (1). We already know, by Proposition \ref{pro12},
that $\text{supp}(\mu_{p}) = X(\infty)$ for all $p\in X$.
This means that for any $h$-dimensional Busemann density $\mu_p$ and any open set $U\subset X(\infty)$, $\mu_p(U)>0$.
Specifically, for any $v\in SX$ and $\epsilon>0$,
we have $\mu_p(C_{\epsilon}(v))>0$, where the open subset $C_{\epsilon}(v)\subset X(\infty)$ is defined in Section \ref{geometric}.

Now we consider the projection $pr_{\xi}:X\rightarrow X(\infty)$, for $\xi\in X(\infty)$. We have the following lemma:
\begin{lemma}\label{mu pr Bxr}
Suppose $\xi\in X(\infty)$. Then for any $x\in X$  there is an $R=R(x)>0$ such that for any $r>R$, $pr_{\xi}(B(x,r))$ contains an open subset of $X(\infty)$.
\end{lemma}
\begin{proof}
We know each element in $\Gamma$ is axial since $M=X/\Gamma$ is compact (\cite{CS} Lemma 2.1).
By Corollary \ref{co2.11}, there is a rank $1$ geodesic $c$ with $c(-\infty)=\xi$. Moreover, Proposition \ref{pro6} implies that for any $\epsilon>0$, there is an open neighborhood $V_{\epsilon}$ of $c(+\infty)$ such that for any $\eta\in V_{\epsilon}$, the unique geodesic $\gamma$ connecting $\xi$ and $\eta$ ($\gamma(-\infty)=\xi$, $\gamma(+\infty)=\eta$) satisfies that $d(c(0),\gamma)<\epsilon$. For any $x\in X$, let $R(x)=d(x,c(0))+\epsilon$, then whenever $r>R(x)$, $V_{\epsilon}\subset pr_{\xi}(B(x,r))$.
\end{proof}

Next we study the projection $pr_x :X\rightarrow X(\infty)$, for general $x\in \overline{X}=X\cup X(\infty)$.
Fix a compact fundamental domain $\mathcal{F}\subset X$ and a point $x_0\in \mathcal{F}$. Consider the set $pr_x(B(p,r))$ where $p\in \mathcal{F}$. Combining Lemma \ref{mu pr Bxr} with Proposition \ref{pro3.2}, we get the following conclusion:
\begin{lemma}
There exist positive constants $\epsilon$ and $R$, such that for any $p\in\mathcal{F}$ and any $x\in\overline{X}$, there is a vector $v\in S_{x_0}X$ such that $C_{\epsilon}(v) \subset pr_{x}B(p,R)$.
\end{lemma}
The proof of this lemma on rank $1$ manifolds with nonpositive curvatures can be found in Knieper \cite{Kn0}, p.~762. Based on Lemma \ref{mu pr Bxr}, on can check that the above result also holds for rank $1$ manifolds without focal points, by following the same argument. So we omit the proof here.

Now we claim that, for any $\epsilon>0$, there is an $l=l(\epsilon)>0$ such that for all $p\in \mathcal{F}$ and $v \in S_{x_{0}}X$,
we have $\mu_{p}(C_{\epsilon}(v))>l$. This can be proved by contradiction. We know that since $\text{supp}(\mu_{p}) = X(\infty)$, then $\mu_p(C_{\epsilon}(v))>0$, for all $p\in X$,  $v\in S_{x_0}X$ and $\epsilon>0$. Assume that there is a number $\epsilon_{0}>0$, and
sequences $\{v_n\}\subset S_{x_0}X$ and $\{p_{n}\}\subset \mathcal{F}$ with $v_n\rightarrow v$,
such that  $\mu_{p_{n}}(C_{\epsilon_{0}}(v_n))<\frac{1}{n}$. Since $v_n\rightarrow v$, there is a constant $N>0$ such that if $n>N$,
then $C_{\epsilon_{0}}(v_n)\supset C_{\frac{\epsilon_{0}}{2}}(v)$. Thus for any $p \in \mathcal{F}$, we have
$$0\leq \mu_{p}(C_{\frac{\epsilon_{0}}{2}}(v))\leq e^{h\cdot d(p,p_{n})}\mu_{p_{n}}(C_{\frac{\epsilon_{0}}{2}}(v))
\leq \frac{1}{n}e^{h\cdot \text{diam}(\mathcal{F})}, ~~n \geq N.$$
This implies that $\mu_p(C_{\frac{\epsilon_{0}}{2}}(v))=0$, which is a contradiction.

Based on the discussion in the above, we know that there exist $R>0$ and $l>0$ such that if $r>R$, then $$\mu_p(pr_x B(p,r))\geq \mu_p(C_{\epsilon}(v))\geq l,$$ for all $x\in \overline{X}$ and $p\in\mathcal{F}$.
In general, for any $p\in X$ we can always find an isometry $\beta\in\Gamma$ such that $\beta p\in \mathcal{F}$. Then by the $\Gamma$-invariance of $\mu_p$, for all $r>R$, we have $$\mu_{p}(pr_{x}B(p,r))=\mu_{\beta p}(pr_{\beta x}B(\beta p,r))>l.$$ We are done with the proof of (1).

Now we prove statement (2). It is obvious that for all $\eta\in pr_{\xi}B(x,r)$,
$$b_x(p,\eta)\leq d(x,p),~\forall~ p\in X.$$
Therefore $-h b_x(p,\eta)\geq-hd(x,p)$. So by the definition of an $h$-dimensional Busemann density we have
\begin{eqnarray*}
\mu_p(pr_{\xi}B(x,r))&=&\int_{pr_{\xi}B(x,r)}d\mu_p(\eta)=\int_{pr_{\xi}B(x,r)}e^{-hb_x(p,\eta)}d\mu_x(\eta)\\
&\geq&\int_{pr_{\xi}B(x,r)}e^{-hd(x,p)}d\mu_x(\eta)=e^{-hd(x,p)}\cdot\mu_x(pr_{\xi}B(x,r))\\
&\geq& l(r)e^{-hd(x,p)}.
\end{eqnarray*}
Here the last inequality follows from Proposition \ref{pro13}(1).

Moreover, one can check that for all $p\in X$ and $\eta\in pr_{\xi}B(x,r)$,  $$|b_x(p,\eta)-d(x,p)|\leq 2r.$$ This implies that $b_x(p,\eta)+2r\geq d(x,p)$. Then we have
\begin{eqnarray*}
\mu_p(pr_{\xi}B(x,r))&=&\int_{pr_{\xi}B(x,r)}d\mu_p(\eta)=\int_{pr_{\xi}B(x,r)}e^{-hb_x(p,\eta)}d\mu_x(\eta)\\
&\leq&\int_{pr_{\xi}B(x,r)}e^{-h(d(x,p)-2r)}d\mu_x(\eta)\leq e^{(h+1)K}e^{2rh}\cdot e^{-hd(x,p)},
\end{eqnarray*}
where $e^{(h+1)K}$ is the uniform upper bound of $\{\mu_{p}\}_{p \in X}$ in Proposition \ref{pro11}.
Let $a(r)=\max\{\frac{1}{l(r)},e^{(h+1)K}e^{2rh}\}$, we get $\frac{1}{a} \cdot e^{-h \cdot d(p,x)} \leq \mu_{p}(pr_{\xi}B(x,r))\leq a\cdot e^{-h \cdot d(p,x)}$. We are done with the proof of (2). Statement (3) can be proved in the same way, so we omit its proof here.
\end{proof}

\subsection{Uniqueness of the Busemann density}
We restate Theorem B below. It asserts the uniqueness of the Busemann density, which was originally proved by Knieper in \cite{Kn0} under the assumption of nonpositive curvature. One can check that, based on our discussion in the above, Knieper's proof also works in the situation of no focal points.

\begin{TheoremB}\label{ThmB}
Let $M=X/\Gamma$ be a compact rank $1$ manifold without focal points, then up to a multiplicative constant, the Busemann density is unique, i.e., the Patterson-Sullivan measure is the unique Busemann density.
\end{TheoremB}

We sketch the idea of the proof, following closely \cite{Kn0} $P_{764}$-$P_{767}$.
First, we fix a point $p_{0}$. Fix an arbitrary point $p \in X \setminus p_{0}$, let $\xi=pr_{p_{0}}p$.
For any positive number $\rho > R$ (where $R$ is the constant in Propsition \ref{pro13}), we call the set
$$B^{\rho}_{r}(\xi):=pr_{p_{0}}(B(p,\rho)\cap S(p_{0},d(p,p_{0})))$$
a ball of radius $r=e^{-d(p,p_{0})}$ at infinity point $\xi$. By the triangle inequality we know $pr_{p_{0}}B(p,\rho) \subseteq B^{2\rho}_{r}(\xi)$.
Then by Proposition \ref{pro13}(3) and Proposition \ref{pro11}, for $\rho > 2R$, there exists a constant number $b(\rho) > 1$ such that
$$\frac{1}{b(\rho)}e^{-h\cdot d(p,p_{0})} \leq \mu_{p_{0}}(B^{\rho}_{r}(\xi)) \leq b(\rho)e^{-h\cdot d(p,p_{0})}.$$
Next, for any subset $A \subseteq X(\infty)$
we can define the $h$-dimensional Hausdorff measure $H^{h}(A)$ of $A$
and Hausdorff dimension $Hd(A)$ of $A$ as Definition 4.5 in \cite{Kn0}.
Then we can show that there exists a constant number $c > 1$ satisfying the following estimate
$$\frac{1}{c}\mu_{p_{0}}(A) \leq H^{h}(A)\leq c\cdot \mu_{p_{0}}(A),  ~ \forall ~\mbox{Borel~set} ~A\subseteq X(\infty).$$
In the proof of the above estimations, Knieper used the property that the distance function of two geodesics on a
simply connected nonpositively curved manifold $\widetilde{M}$ (usually we call it a Hadamard manifold) is convex,
i.e., if $c_{1}$ and $c_{2}$ are two unit speed geodesics on a Hadamard manifold $\widetilde{M}$, then the function
$t\mapsto d(c_{1}(t),c_{2}(t))$ is a convex function. Although the convexity is no longer valid for manifolds without focal points, we can still get the same estimations based on
Lemma \ref{OS}. Then we can prove that the volume entropy $h$ equals the $Hd(X(\infty))$, i.e., the Hausdorff dimension of $X(\infty)$.
Finally, we can show that all Busemann densities have to coincide up to a constant.

A natural question is to study the existence and unqueness of Busemann density for rank $1$ manifolds without conjugate points.
In our proof of Busemann density for rank $1$ manifolds without focal points, Proposition \ref{pro13} (1) plays a key role, which in turn follows from the geometric properties stated in Lemma \ref{pro7} and Proposition \ref{pro6}. However, we don't know whether Lemma \ref{pro7} and Proposition \ref{pro6} are still true under the assumption of no conjugate points.

\section{\bf{Exponential volume growth rate of geodesic spheres}}\label{volume}
In this section, we consider the growth rate of the geodesic sphere $S(x,r)$ about $x$ with radius $r$, on the universal cover $X$ of a compact rank $1$ manifold $M$ without focal points. Based on the technical results prepared in Proposition \ref{pro13}, we prove Theorem C in this section. We remark that it is a generalization of Theorem 5.1 in \cite{Kn0}.


\begin{proof}[Proof of Theorem C]
Since we are interested in only sufficiently large $r$, we can assume $r>3R$, where $R$ is the constant in Proposition \ref{pro13}. Let $\{q_{1},q_{2},\cdots,q_{k}\}$ be a maximal $3R-$separated set of $S(x,r)$. Then we have that
\begin{equation}\label{eq10-1}
S(x,r)\subset \bigcup^{k}_{i=1}B(q_{i},3R),
\end{equation}
and
\begin{equation}\label{eq10-2}
B(q_{i},R) \cap B(q_{j},R) = \emptyset, ~\forall~i \neq j.
\end{equation}
By Proposition \ref{pro13} (3), there is a constant $b=b(r)$ such that for all $\rho \geq R$, we have
$$\frac{1}{b} \cdot e^{-h \cdot r} \leq \mu_{x}(pr_{x}B(q_{i},\rho))\leq b\cdot e^{-h \cdot r}, ~~i=1,2,\cdots,k.$$
Notice that $X(\infty)=pr_x(S(x,r))$. Combining with \eqref{eq10-1}, we have that
$$\mu_x(X(\infty))\leq\sum_{i=1}^k\mu_x(pr_x(S(x,r)\cap B(q_i,3R))) \leq k b\cdot e^{-h \cdot r}.$$
Also by \eqref{eq10-2}, we know that
$$\mu_x(X(\infty))\geq \sum_{i=1}^k\mu_x(pr_x(S(x,r)\cap B(q_i,R))) \geq  \frac{k}{b}\cdot e^{-h \cdot r}.$$
From the two inequalities in the above we know that there is a constant $c>0$ such that the following inequality holds
$$\frac{1}{c} \cdot e^{h \cdot r} \leq k \leq c\cdot e^{h \cdot r}.$$

Since $M$ is a compact manifold, we can see that there is a positive constant $A$, such that for all $r > 3R$ and  $R \leq \rho \leq 3R$, we have
$$\frac{1}{A} \leq \text{Vol}(B(q_{i},\rho)\cap S(x,r))\leq A, ~~i=1,2,\cdots,k.$$
This implies that
$$\frac{1}{cA}e^{h \cdot r}\leq\sum^{k}_{i=1}\text{Vol}(B(q_{i},R)\cap S(x,r)) \leq \text{Vol}S(x,r)
\leq \sum^{k}_{i=1}\text{Vol}(B(q_{i},3R)\cap S(x,r))\leq cA e^{h \cdot r}.$$
Therefore, we get
$$\frac{1}{cA} \leq \frac{\text{Vol}S(x,r)}{e^{h \cdot r}} \leq cA.$$
Let $a=cA>0$ and $r_0=3R$, then we are done with the proof.
\end{proof}

Similar to \cite{Kn0}, we can draw the conclusion that the Poincar\'e series diverges at $s=h$, based on the estimation growth rate of $\text{Vol}S(x,r)$ in Thoerem C. This is the following corollary:

\begin{corollary}\label{divergent type}
Suppose $M=X\setminus \Gamma$ is a compact rank $1$ Riemannian manifold without focal point. Then the Poincar\'e series defined in \eqref{poincare series} is divergent at $s=h$.
\end{corollary}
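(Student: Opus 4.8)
The plan is to derive divergence of the Poincar\'e series at $s=h$ from the volume growth estimate in Theorem C, by comparing the series with a lattice-point counting function, following the argument of Knieper \cite{Kn0}. Recall that in the proof of Lemma \ref{pro9} we fixed points $p,p_{0}\in X$, set $S_{k}:=\sharp\{\Gamma p_{0}\cap(B(p,k+1)\setminus B(p,k))\}$, and established the elementary two-sided bound
\[
\sum_{k\ge 0}S_{k}e^{-s(k+1)}\ \le\ P(s,p,p_{0})\ \le\ \sum_{k\ge 0}S_{k}e^{-sk}.
\]
In particular $P(h,p,p_{0})\ge \sum_{k\ge 0}S_{k}e^{-h(k+1)}$, so it suffices to show that this series diverges. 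Introduce the counting function $N(r):=\sharp\{\Gamma p_{0}\cap B(p,r)\}$; then for each integer $m\ge 1$ one has $N(m)=\sum_{k=0}^{m-1}S_{k}$, so the problem reduces to a suitable lower bound on $N$.

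First I would prove that there is a constant $c>0$ with $N(r)\ge c\,e^{hr}$ for all large $r$. Fix a compact fundamental domain $\mathcal{F}$ with $p_{0}\in\mathcal{F}$ and put $d=\text{diam}(\mathcal{F})$. If $\alpha\mathcal{F}$ meets $B(p,r)$, then any point of $\alpha\mathcal{F}\cap B(p,r)$ lies within $d$ of $\alpha p_{0}\in\alpha\mathcal{F}$, hence $\alpha p_{0}\in B(p,r+d)$; therefore $B(p,r)\subset\bigcup_{\alpha:\,\alpha p_{0}\in B(p,r+d)}\alpha\mathcal{F}$, and since each $\alpha$ is an isometry, $\text{Vol}(B(p,r))\le N(r+d)\cdot\text{Vol}(\mathcal{F})$. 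On the other hand $\text{Vol}(B(p,r))=\int_{0}^{r}\text{Vol}(S(p,\rho))\,d\rho$, and Theorem C gives $\text{Vol}(S(p,\rho))\ge \tfrac1a e^{h\rho}$ for $\rho>r_{0}$, so $\text{Vol}(B(p,r))\ge \tfrac{1}{2ah}e^{hr}$ once $r$ is large enough. Combining the two estimates and writing $m=r+d$ yields $N(m)\ge c\,e^{hm}$ for all large $m$, with $c=e^{-hd}/\bigl(2ah\,\text{Vol}(\mathcal{F})\bigr)$.

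Finally I would convert this ``average'' exponential growth of $N$ into genuine divergence of the weighted series by summation by parts --- this is the essential point, since nothing forces the individual annular counts $S_{k}$ to grow like $e^{hk}$, and it is the one place a naive argument could break down. Writing $w_{k}=e^{-h(k+1)}$ and using $S_{k}=N(k+1)-N(k)$, Abel summation gives
\[
\sum_{k=0}^{m-1}S_{k}w_{k}\ =\ e^{-hm}N(m)\ -\ e^{-h}N(0)\ +\ (1-e^{-h})\sum_{j=1}^{m-1}e^{-hj}N(j).
\]
Choosing $j_{0}$ with $N(j)\ge c\,e^{hj}$ for $j\ge j_{0}$, the last sum is at least $c\,(m-j_{0})$ for $m>j_{0}$, so the right-hand side tends to $+\infty$ as $m\to\infty$. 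Hence $\sum_{k\ge 0}S_{k}e^{-h(k+1)}=+\infty$, and therefore $P(h,p,p_{0})=+\infty$, i.e.\ $\Gamma$ is of divergent type. The whole argument is routine once Theorem C is available; the only mildly delicate points are the fundamental-domain ``thickening'' in the counting comparison and the summation-by-parts step just described.
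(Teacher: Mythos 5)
Your argument is correct and takes essentially the same route as the paper's (omitted) proof, which simply defers to Knieper \cite{Kn0}: the sphere-volume lower bound of Theorem C is converted into a lower bound on the orbit-counting function via a fundamental-domain covering, and this forces divergence of the Poincar\'e series at $s=h$. Your Abel-summation step is a clean way to handle the fact that only the cumulative count $N(r)$, and not each annular count $S_k$, is controlled from below.
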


The proof is the same as the proof of Corollary 5.2 in \cite{Kn0}, so we omit it here.

\section{\bf{Existence and uniqueness of the measure of maximal entropy}}\label{maximal}

Let $P:SX \rightarrow X(\infty) \times X(\infty)$ be the projection given by $P(v)=(\gamma_{v}(-\infty),\gamma_{v}(+\infty))$. Denote by $\mathcal{I}^{P}=P(SX)=\{P(v)\mid v \in SX\}$ the subset of pairs in $X(\infty) \times X(\infty)$
which can be connected by a geodesic. Note that the connecting geodesic may not be unique.
Fix a point $p\in X$, we can define a $\Gamma$-invariant measure $\overline{\mu}$
on $\mathcal{I}^{P}$ by the following formula
(cf.~\cite{Kn1} Lemma 2.4):
$$d \overline{\mu}(\xi,\eta) = e^{h\cdot \beta_{p}(\xi,\eta)}d\mu_{p}(\xi) d\mu_{p}(\eta),$$
where $\beta_{p}(\xi,\eta)=-\{b_{p}(q,\xi)+b_{p}(q,\eta)\}$ is the Gromov product, and $q$ is any point on a geodesic $\gamma$ connecting $\xi$ and $\eta$. It's easy to see that the function $\beta_{p}(\xi,\eta)$ does not depend on the choice of $\gamma$ and $q$.
In geometric language, the Gromov product $\beta_{p}(\xi,\eta)$ is the length of the part of the geodesic
$\gamma_{\xi,\eta}$ between the horospheres $H_{\xi}(p)$ and $H_{\eta}(p)$.
Then, $\overline{\mu}$ induces a $\phi$-invariant measure $\mu$ on $SX$ with
\begin{equation}\label{e:def of max entropy measure}
\mu(A)=\int_{\mathcal{I}^{P}} \text{Vol}\{\pi(P^{-1}(\xi,\eta)\cap A)\}d \overline{\mu}(\xi,\eta),
\end{equation}
for all Borel sets $A\subset SX$. Here $\pi : SX \rightarrow X$ is the standard projection map and
Vol is the induced volume form on $\pi(P^{-1}(\xi,\eta))$.
By the definition of $P$, we know that if there is no geodesic connecting $\xi$ and $\eta$,
then $P^{-1}(\xi,\eta)=\emptyset$. If there are more than one geodesics connecting $\xi$ and $\eta$,
and one of them has rank $k\geq 1$, then by the flat strip theorem, all of these connecting geodesics have rank $k$.
Thus, $P^{-1}(\xi,\eta)$ is exactly the $k$-flat submanifold connecting $\xi$ and $\eta$,
which is consisting of all the rank $k$ geodesics connecting $\xi$ and $\eta$. Especially,
when $k$=1, $P^{-1}(\xi,\eta)$ is exactly the rank 1 (thus unique) geodesic connecting $\xi$ and $\eta$.
From the above we can see the induced volume form Vol satisfies that for any Borel set $A\subset SX$ and $t\in \mathbb{R}$,
$\text{Vol}\{\pi(P^{-1}(\xi,\eta)\cap \phi^{t}A)\}=\text{Vol}\{\pi(P^{-1}(\xi,\eta)\cap A)\}$.
Therefore the $\Gamma$-invariance of $\overline{\mu}$ leads to the $\Gamma$-invariance of $\mu$.

By Theorem \ref{th3.15} we know the horospheres depend continuously on their centers at $X(\infty)$.
While the value of the Gromov product is the length of the segment of the connecting geodesic between the horospheres,
 $\beta_{p}(\cdot,\cdot): X(\infty) \times X(\infty) \rightarrow \mathbb{R}$ is a continuous function.
Since  $X(\infty) \times X(\infty)$ is compact (Proposition \ref{pro2.4}), $\beta_{p}(\cdot,\cdot)$ is bounded function.
By the fact that $\mu_{p}$ is uniformly bounded and $M$ is compact,
$\mu$ can be projected to a finite $\phi$-invariant measure on $SM$.
To simplify the notation, we also use $\mu$ to denote this projected measure. If $\mu(SM)\neq 1$, we can normalize it.
So in this paper we always assume $\mu$ is a probability measure.

Now, we show that the measure $\mu$ on $SM$ is the unique maximal entropy measure.
Thus $\mu$ is independent on $p$.
This measure is called the \emph{Knieper measure} since it is first obtained by G.~Knieper on rank $1$ manifolds (cf.~\cite{Kn1}). We decompose our proof into $3$ parts: First, we show that $h_{\mu}(\phi)=h_{\text{top}}(g)$, so $\mu$ is a measure of maximal entropy; Second, we show $\mu$ is an ergodic measure; In the last step, we show that the entropy of any invariant probability measure orthogonal to $\mu$ is strictly less than $h_{\mu}(\phi)$, this leads to the uniqueness of the invariant probability measure of maximal entropy. We should remark that this argument follows the idea of Knieper in \cite{Kn1}.

\subsection{Entropy of the Knieper measure}\label{compute entropy}
In this subsection, we will calculate the entropy of the Knieper measure $\mu$ and prove that
\begin{equation}\label{hmu equal h}
h_{\mu}(\phi) = h
\end{equation}
where $h=h_{\text{top}}(g)$ is the topological entropy of the geodesic flow. We recall that the measure theoretical entropy $h_{\mu}(\phi)$  of the geodesic flow is defined to be the entropy of $\mu$ with respect to the time-$1$ map $\phi^1$, i.e. $h_{\mu}(\phi) := h_{\mu}(\phi^1)$.
Since by the variational principle, we know that $$h_{\text{top}}(g)=\sup_{\mu\in\mathcal{M}(\phi^1)}h_{\mu}(\phi^1),$$ where $\mathcal{M}(\phi^1)$ denotes the set of all $\phi^1$-invariant measures. Then, \eqref{hmu equal h} implies that $\mu$ is an invariant measure of maximal entropy.

For each $k\in \mathbb{Z}^+$, we define a metric $d_{k}$ on $SM$ or $SX$ defined by the following formula:
$$d_{k}(v,w)=\max \{d(\gamma_{v}(t),\gamma_{w}(t))\mid t \in [0,k]\}.$$
Specifically
$d_{1}(v,w)=\max \{d(\gamma_{v}(t),\gamma_{v}(t))\mid t \in [0,1]\}$, we call it \emph{Knieper metric}.
One can check that the Knieper metric $d_1$ is equivalent to the standard Sasaki metric on $SM$.

Consider a finite measurable partition $\mathcal{A}=\{A_{1},...,A_{n}\}$ of  $SM$,
satisfying that the radius of every $A_k$ under Knieper metric $d_1$ is less than $\epsilon < \min\{\text{Inj}(M),R\}$,
where $R>0$ is the constant in Proposition \ref{pro13}.
Let $\mathcal{A}^{(n)}_{\phi}:=\vee^{n-1}_{i=0}\phi^{-i}\mathcal{A}$, $n=1,2,\cdots$.
Obviously $\mathcal{A}_{\phi}^{(n)}$ is also a finite measurable partition of $SM$.
For each $v\in SM$ and $\alpha \in \mathcal{A}^{(n)}_{\phi}$, we know that if  $v\in\alpha$£¬then
$$\alpha \subset \bigcap^{n-1}_{k=0}\phi^{-k}B_{d_{1}}(\phi^{k}v,\epsilon),$$
where $B_{d_{1}}(\phi^{k}v,\epsilon)$ denotes the $\epsilon$-neighborhood of $\phi^{k}v\in SM$ under the Knieper metric $d_1$.
Similar to Lemma 2.5 in \cite{Kn1}, using Proposition \ref{pro13}(2) we can show that, on a compact rank $1$ manifold without focal points, there is $a > 0$ such that for any $\alpha \in A^{n}_{\phi}$, we have $\mu(\alpha)\leq e^{-hn}a$.
Therefore
$$H_{\mu}(\mathcal{A}^{(n)}_{\phi})=-\sum_{\alpha \in \mathcal{A}^{(n)}_{\phi}}\mu(\alpha)\log\mu(\alpha)\geq n\cdot h-\log a,$$
where $h=h_{\text{top}}(g)$ is the topological entropy of the geodesic flow.
This implies that $$h_{\mu}(\phi^{1})\geq h_{\mu}(\phi^1,\mathcal{A})=\lim_{n\rightarrow \infty}\frac{1}{n}H_{\mu}(\mathcal{A}^{(n)}_{\phi})\geq h,$$ and then $h_{\mu}(\phi^{1}) = h$. Thus we know that $\mu$ is an invariant measure of maximal entropy.

\subsection{Ergodicity of the Knieper measure}\label{proof of ergodicity}
In this subsection, we will prove that the Knieper measure $\mu$ is an ergodic measure of the geodesic flow.
Our proof follows the idea of the proof in Section 4 of Knieper \cite{Kn1}. To simplify the notation,
we also use $\mu$ to denote the $\Gamma$-invariant measure on $SX$ defined in \eqref{e:def of max entropy measure},
which is exactly the lifting of the maximal entropy measure $\mu$ we are considering.

For each $v \in SX$, let $\xi = \gamma_{v}(+\infty)$. Let
$$W^{s}(v):= \{w \in SX \mid \omega = -\nabla b_{\pi v}(q,\xi) ~~\mbox{and} ~~ b_{\pi v}(q,\xi)=0\}$$
denote the strong stable manifold through $v$.

\begin{lemma}\label{lem7}
Assume that $v \in SX$ is a recurrent rank $1$ vector, then
$$d_{1}(\phi^{t}(v),\phi^{t}(w))\rightarrow 0, ~~~t\rightarrow +\infty,$$
for any $w \in W^{s}(v)$, where $d_{1}$ is the Knieper metric defined in Subsection \ref{compute entropy}.
\end{lemma}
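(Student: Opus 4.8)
The plan is to combine the asymptotic divergence/convergence estimates of O'Sullivan (Lemma \ref{OS}) with the rank $1$ recurrence of $v$ to upgrade the a priori boundedness of $d(\gamma_v(t),\gamma_w(t))$ into actual convergence to $0$. First I would record the basic facts: since $w\in W^s(v)$, the geodesics $\gamma_w$ and $\gamma_v$ are positively asymptotic and share the endpoint $\xi=\gamma_v(+\infty)$, so by Lemma \ref{OS}(2) the function $t\mapsto d(\gamma_v(t),\gamma_w(t))$ is non-increasing on $\mathbb{R}$; hence it has a limit $c\geq 0$ as $t\to+\infty$, and $d_1(\phi^t v,\phi^t w)$ is controlled by $d(\gamma_v(t-1),\gamma_w(t-1))$ on the relevant time window, so it suffices to show $c=0$. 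Note $d_1$-convergence and $d(\gamma_v(t),\gamma_w(t))$-convergence are equivalent here because $d_1(\phi^t v,\phi^t w)=\max_{s\in[0,1]}d(\gamma_v(t+s),\gamma_w(t+s))$, which by monotonicity equals $d(\gamma_v(t),\gamma_w(t))$.

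Suppose for contradiction that $c>0$. The idea is to exploit recurrence of $v$: there are $t_n\to+\infty$ and $\alpha_n\in\Gamma$ with $d\alpha_n\circ\phi^{t_n}(v)\to v$. Consider the vectors $w_n:=d\alpha_n\circ\phi^{t_n}(w)$. Since $d(\pi\phi^{t_n}v,\pi\phi^{t_n}w)=d(\gamma_v(t_n),\gamma_w(t_n))\to c$ and $\alpha_n$ is an isometry, the footpoints of $d\alpha_n\phi^{t_n}v$ and $w_n$ stay at distance $\to c$; moreover $\gamma_{w_n}$ remains positively asymptotic to $\gamma_{d\alpha_n\phi^{t_n}v}$, which converges to $\gamma_v$. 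Passing to a subsequence, $w_n\to w_\infty\in SX$ for some $w_\infty$ whose geodesic is positively asymptotic to $\gamma_v$ with $d(\pi v,\pi w_\infty)=c>0$; in particular $w_\infty\neq v$ and $\gamma_{w_\infty}(+\infty)=\gamma_v(+\infty)=\xi$. Now I would run the same construction in negative time: by monotonicity of $d(\gamma_v(t),\gamma_w(t))$ on all of $\mathbb{R}$ (Lemma \ref{OS}(2)), for every fixed $s>0$ we have $d(\gamma_v(-s),\gamma_w(-s))\geq d(\gamma_v(0),\gamma_w(0))$, and shifting by $t_n$ and applying $\alpha_n$ shows $d(\gamma_{d\alpha_n\phi^{t_n}v}(-s),\gamma_{w_n}(-s))=d(\gamma_v(t_n-s),\gamma_w(t_n-s))$, which stays bounded (it is non-increasing in the forward variable, so bounded by $d(\gamma_v(0),\gamma_w(0))$ for $s\le t_n$). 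Letting $n\to\infty$ gives $d(\gamma_v(-s),\gamma_{w_\infty}(-s))$ bounded uniformly in $s\ge 0$, i.e.\ $\gamma_{w_\infty}$ is also negatively asymptotic to $\gamma_v$. Thus $\gamma_v$ and $\gamma_{w_\infty}$ are biasymptotic distinct geodesics, so by the flat strip theorem (valid without focal points) they bound a flat strip, forcing $\mathrm{rank}(v)\geq 2$, contradicting that $v$ is a rank $1$ vector. Hence $c=0$ and the lemma follows.

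The step I expect to be the main obstacle is making the negative-time argument rigorous: one must be careful that passing to the limit in $n$ genuinely yields a uniform-in-$s$ bound on $d(\gamma_v(-s),\gamma_{w_\infty}(-s))$ rather than a bound that degrades with $s$. The clean way is to fix $s$, note $d(\gamma_v(t_n - s),\gamma_w(t_n-s))\le d(\gamma_v(0),\gamma_w(0))$ for all large $n$ (by Lemma \ref{OS}(2), since $t_n - s \ge 0$ eventually), then take $n\to\infty$ using continuity of $\gamma$ in its initial vector (the geodesic flow is continuous) together with $d\alpha_n\phi^{t_n}v\to v$ and $w_n\to w_\infty$, to conclude $d(\gamma_v(-s),\gamma_{w_\infty}(-s))\le d(\gamma_v(0),\gamma_w(0))$; since $s$ was arbitrary this is the desired uniform bound. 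A secondary point to verify carefully is that $w_\infty\neq v$: this uses $d(\pi v,\pi w_\infty)=\lim_n d(\gamma_v(t_n),\gamma_w(t_n))=c>0$, which is where the contradiction hypothesis is used. Everything else is routine given Lemma \ref{OS} and the flat strip theorem.
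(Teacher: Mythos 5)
Your proposal is correct and follows essentially the same route as the paper: monotonicity of the distance between positively asymptotic geodesics (Lemma \ref{OS}(2)), recurrence of $v$ to produce $w_n = d\alpha_n\phi^{t_n}(w)$ accumulating at some $w_\infty$ biasymptotic to and distinct from $\gamma_v$, and the flat strip theorem to contradict $\mathrm{rank}(v)=1$. The only cosmetic difference is that the paper obtains the two-sided bound $c \le d_1(\phi^s(w'),\phi^s(v)) \le d_1(v,w)$ for all $s\in\mathbb{R}$ in one step by writing $t = t_n+s$ with $s\in[-t_n,+\infty)$, whereas you treat the negative-time direction separately; both arguments are sound.
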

\begin{proof}
First of all, since $v$ is an recurrent vector on $SX$, then we can find a sequence $\{\alpha_{n}\}^{\infty}_{n=1}\subset \Gamma$ and a sequence of time $\{t_{n}\}^{\infty}_{n=1}$ with $t_{n}\rightarrow +\infty$, such that
\begin{equation}\label{rec v}
d\alpha_{n}(\phi^{t_n}v)\rightarrow v,~~~n\rightarrow +\infty.
\end{equation}

Now take an arbitrary vector $w \in W^{s}(v)$.
In \cite{Ru2},
R. Ruggiero showed that for manifold without focal points, $\gamma_{v}$ is positively asymptotic to $\gamma_{w}$.
Then by Lemma \ref{OS}, the function
$$t \longrightarrow d_{1}(\phi^{t}(w), \phi^{t}(v))$$ is monotonely non-increasing. Now we show that  $d_{1}(\phi^{t}(w), \phi^{t}(v))\rightarrow 0$ as $t\rightarrow +\infty$, by contradiction. Assume that there is a constant $c>0$ such that
$$d_{1}(\phi^{t}(w), \phi^{t}(v))\geq c, ~~~\forall~t \geq 0.$$
Then for each $t_n$ in \eqref{rec v}, we write $t=t_n+s>0$ where $s \in [-t_{n},+\infty)$, then we have
$$d_{1}(\phi^{t_{n}+s}(w),\phi^{t_{n}+s}(v))>c, ~~\forall~s \in [-t_{n},+\infty).$$
Then,
\begin{eqnarray*}
c\leq d_{1}(\phi^{t_{n}+s}(w),\phi^{t_{n}+s}(v))
& = & d_{1}(d\alpha_{n}\circ\phi^{t_{n}+s}w,d\alpha_{n}\circ\phi^{t_{n}+s}v)\\
& = & d_{1}(\phi^{s}\circ d\alpha_{n}\circ\phi^{t_{n}}w,\phi^{s}\circ d\alpha_{n}\circ\phi^{t_{n}}v).
\end{eqnarray*}

Note that $d_{1}(\phi^{t_{n}}(w),\phi^{t_{n}}(v))\leq d_1(v,w)$, and $d\alpha_{n}(\phi^{t_{n}}v)\rightarrow v$.
So we know that for any $\epsilon>0$ there is a number $N>0$ such that for all integer $n>N$, $$d_1(v,d\alpha_{n}(\phi^{t_{n}}w))\leq d_1(v,w)+\epsilon.$$ This implies that the set $\{d\alpha_{n}(\phi^{t_{n}}w)\}$ has an accumulate point. Without loss of generality, we assume $d\alpha_{n}(\phi^{t_{n}}w)\rightarrow w'$. Let $n\rightarrow +\infty$, we get that
$$c \leq  d_{1}(\phi^{s}(w'),\phi^{s}(v)) \leq d_{1}(v,w), ~~~\forall~s \in \mathbb{R}.$$
This means that the geodesics $\gamma_{v}$  and $\gamma_{w'}$ bound a flat strip,
which contradicts the assumption that $\text{rank}(v)=1$. We are done with the proof.
\end{proof}

Let $a$ be an arbitrary rank $1$ axis, and $U,V\subset X(\infty)$ be a pair of neighborhoods of  $a(+\infty)$ and $a(-\infty)$ satisfying
Proposition \ref{pro6}. Therefore for each pair  $(\xi,\eta)\in U \times V$, there is a unique rank $1$ geodesic connecting $\xi$ and $\eta$.

Let
$$\mathcal{G}(U,V)=\{c \mid c~ \mbox{is a geodesic with} ~c(-\infty)\in U ~\mbox{and} ~c(+\infty)\in V\},$$
and
$$\mathcal{G}_{rec}(U,V)=\{c \mid c\in \mathcal{G}(U,V),~\mbox{and} ~c' ~\mbox{is recurrent}\}.$$
Moreover, let $\mathcal{G}'(U,V)=\{c'(t) ~|~ c\in \mathcal{G}(U,V),~t\in\mathbb{R} \}$ and $\mathcal{G}'_{rec}(U,V)=\{c'(t) ~|~ c\in \mathcal{G}_{rec}(U,V),~t\in\mathbb{R} \}$. Then,
by Poincar\'e recurrence theorem, $\mu(\mathcal{G}'(U,V)\setminus \mathcal{G}'_{rec}(U,V))=0$.

Let $f:SX \longrightarrow \mathbb{R}$ is an arbitrary continuous $\Gamma$-invariant function.
By Birkhoff ergodic theorem,
for $\mu$-almost all vectors $v\in SX$, $f^+(v)$ and $f^-(v)$ in the following are well-defined and equal:
$$f^{+}(v):= \lim_{T\rightarrow +\infty}\frac{1}{T}\int^{T}_{0}f(\gamma'_v(t))dt,~~\mbox{and}~~ f^{-}(v):= \lim_{T\rightarrow +\infty}\frac{1}{T}\int^{T}_{0}f(\gamma'_v(-t))dt.$$
Where $\gamma_v$ is the unique geodesic with $\gamma'_v(0)=v$. It is easy to see that $f^{\pm}$ are constants along each geodesics.
So we can define $f^{\pm}(c):=f^{\pm}(c'(0))$ for every geodesic $c$.
Let $$\mathcal{\widetilde{G}}_{rec}(U,V)=\{c \mid c \in \mathcal{G}_{rec}(U,V), ~f^+(c)=f^-(c)\},$$ and $\mathcal{\widetilde{G}}'_{rec}(U,V)=\{c'(t)~|~c\in\mathcal{\widetilde{G}}_{rec}(U,V),~t\in\mathbb{R}\}$.
Therefore we have $$\mu(\mathcal{G}'(U,V)\setminus \mathcal{\widetilde{G}}'_{rec}(U,V))=0.$$

\begin{lemma}\label{Gxi}
There is a geodesic $c_1\in \mathcal{\widetilde{G}}_{rec}(U,V)$ with $c_1(-\infty)=\xi_1\subset U$, such that
$$G_{\xi_1}:= \{\eta \in V \mid \exists~c\in\mathcal{\widetilde{G}}_{rec}(U,V) ~\mbox{with}~ c(+\infty)=\eta, c(-\infty)=\xi_1\}\subset V$$
has full measure w.r.t. $\mu_{p}$, i.e., $\mu_{p}(G_{\xi_1})=\mu_{p}(V)$.
\end{lemma}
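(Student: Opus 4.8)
The plan is to use a Fubini-type argument on the product measure $\bar\mu$ restricted to $U \times V$, combined with the fact that $\mathcal{G}'(U,V) \setminus \widetilde{\mathcal{G}}'_{rec}(U,V)$ has $\mu$-measure zero. First I would observe that, by the very construction of $\mu$ from $\bar\mu$ in \eqref{e:def of max entropy measure}, and because on the rank $1$ region the fibers $P^{-1}(\xi,\eta)$ are single geodesics (so the inner volume integral is just arc length along one geodesic and is strictly positive on any nonempty open flow-invariant piece), a flow-invariant set $Z \subset \mathcal{G}'(U,V)$ with $\mu(Z) = 0$ forces $\bar\mu\bigl(\{(\xi,\eta) \in U\times V : c_{\xi,\eta} \text{ has some } c'_{\xi,\eta}(t) \in Z\}\bigr) = 0$; here $c_{\xi,\eta}$ denotes the unique rank $1$ connecting geodesic guaranteed by Proposition \ref{pro6} for $(\xi,\eta) \in U \times V$. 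Applying this to $Z = \mathcal{G}'(U,V) \setminus \widetilde{\mathcal{G}}'_{rec}(U,V)$ (which indeed is flow-invariant, since recurrence and the equality $f^+ = f^-$ are properties of the whole orbit), we conclude
\[
\bar\mu\bigl(\{(\xi,\eta) \in U \times V : c_{\xi,\eta} \notin \widetilde{\mathcal{G}}_{rec}(U,V)\}\bigr) = 0.
\]

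Next I would invoke the explicit form $d\bar\mu(\xi,\eta) = e^{h\beta_p(\xi,\eta)}\,d\mu_p(\xi)\,d\mu_p(\eta)$. Since $\beta_p$ is continuous and hence bounded on the compact set $\overline{U}\times\overline{V}$, the density $e^{h\beta_p(\xi,\eta)}$ is bounded above and below by positive constants there, so $\bar\mu$ restricted to $U \times V$ is mutually absolutely continuous with the product measure $\mu_p|_U \otimes \mu_p|_V$. Therefore the displayed null set above is also $(\mu_p \otimes \mu_p)$-null on $U \times V$. By Fubini's theorem applied to the product measure $\mu_p|_U \otimes \mu_p|_V$, for $\mu_p$-almost every $\xi \in U$ the slice
\[
\{\eta \in V : c_{\xi,\eta} \notin \widetilde{\mathcal{G}}_{rec}(U,V)\}
\]
is $\mu_p$-null, i.e. $\mu_p(G_\xi) = \mu_p(V)$ for $\mu_p$-a.e. $\xi \in U$. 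Since $\mu_p$ has full support $X(\infty)$ by Proposition \ref{pro12}, in particular $\mu_p(U) > 0$, so the set of such "good" $\xi$ is nonempty; moreover, shrinking if needed, we may pick such a $\xi_1$ that additionally lies on a recurrent rank $1$ geodesic whose forward endpoint is in $V$ (again using that $\mathcal{G}_{rec}(U,V)$ carries full $\mu$-measure inside $\mathcal{G}(U,V)$, hence the corresponding endpoint set is $\mu_p$-generic), which gives the required $c_1 \in \widetilde{\mathcal{G}}_{rec}(U,V)$ with $c_1(-\infty) = \xi_1$.

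The main obstacle I anticipate is making rigorous the first step — the passage from $\mu(Z) = 0$ on $SX$ to a statement about $\bar\mu$ on the pair space — because the map $(\xi,\eta)\mapsto c_{\xi,\eta}$ is only defined and single-valued on the rank $1$ part of $U \times V$, and one must check that the contribution of any higher-rank pairs (flat strips) to both $\mu$ and $\bar\mu$ is handled consistently. Here the hypothesis that $U,V$ are the special neighborhoods from Proposition \ref{pro6}, for which \emph{every} pair $(\xi,\eta)\in U\times V$ is joined by a \emph{rank $1$} connecting geodesic, is exactly what removes this difficulty: on $U\times V$ the fiber is always a single geodesic, so the disintegration of $\mu$ over $\bar\mu$ is clean and the arc-length density is strictly positive, giving the desired equivalence of null sets. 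I would also need the measurability of all the sets involved (e.g. $\widetilde{\mathcal{G}}_{rec}(U,V)$), which follows from continuity of the projection $P$, of $\beta_p$, of the Birkhoff averages $f^\pm$ on the full-measure set where they exist, and of the recurrence condition (a countable intersection of open conditions).
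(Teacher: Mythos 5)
Your proposal is correct and follows essentially the same route as the paper: both arguments exploit that the bad set of unit vectors is flow-invariant with $\mu$-measure zero, unwind the defining formula \eqref{e:def of max entropy measure} to bound $\mu$ of that set from below by the $\mu_p\otimes\mu_p$-measure of the corresponding pair set in $U\times V$ (using positivity of the fiber volume and boundedness of $e^{h\beta_p}$), and then apply Fubini to get $\mu_p(G_\xi)=\mu_p(V)$ for $\mu_p$-a.e.\ $\xi\in U$, from which a suitable $\xi_1$ and $c_1$ are extracted. Your choice of complement, $\mathcal{G}(U,V)\setminus\widetilde{\mathcal{G}}_{rec}(U,V)$ rather than the paper's $\mathcal{G}_{rec}(U,V)\setminus\widetilde{\mathcal{G}}_{rec}(U,V)$, is in fact the cleaner one, since it accounts for the non-recurrent geodesics as well.
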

\begin{proof}
Let $\mathcal{E}=\mathcal{G}_{rec}(U,V)\setminus\mathcal{\widetilde{G}}_{rec}(U,V),$ and $\mathcal{E}'=\{c'(t)~|~c\in\mathcal{E},~t\in\mathbb{R}\}$. Obviously $\mu(\mathcal{E}')=0$. Moreover, for any $\xi\in U$, let
$$G^{co}_{\xi}= \{\eta \in V \mid  \exists~c\in\mathcal{E},~\mbox{with}~ c(-\infty)=\xi, ~c(+\infty)=\eta\}\subset V.$$
Then we have
\begin{eqnarray*}
0 = \mu(\mathcal{E}')
& = & \int_{\mathcal{I}^{P}}\text{Vol}(\pi(P^{-1}(\xi,\eta)\cap \mathcal{E}'))e^{h\beta_{p}(\xi,\eta)}d\mu_{p}(\xi) d\mu_{p}(\eta)\\
& = & \int_{P(\mathcal{E}')}\text{Vol}(\pi\circ P^{-1}(\xi,\eta)\cap \mathcal{E}')e^{h\beta_{p}(\xi,\eta)}d\mu_{p}(\xi) d\mu_{p}(\eta)\\
& \geq & \int_{\xi\in U,~\eta\in G^{co}_{\xi}}d\mu_{p}(\xi) d\mu_{p}(\eta)=\int_{\xi\in U}\left(\int_{G^{co}_{\xi}}d\mu_{p}(\eta)\right)d\mu_{p}(\xi).
\end{eqnarray*}
Therefore, $\int_{\xi\in U}(\int_{G^{co}_{\xi}}d\mu_{p}(\eta))d\mu_{p}(\xi)=0$, which implies that for  $\mu_{p}$-a.e.~$\xi \in U$,
$\mu_{p}(G^{co}_{\xi})=0$. This means that $\mu_{p}(G_{\xi})=\mu_{p}(V)$ for $\mu_{p}$-a.e.~$\xi \in U$.

Take a $\xi_1\in U$ such that there is a geodesic $c_1\in \mathcal{\widetilde{G}}_{rec}(U,V)$ with $c(-\infty)=\xi_1$, and $\mu_{p}(G_{\xi})=\mu_{p}(V)$.
From the above argument we know that the point $\xi_1$ and the geodesic $c_1$ always exist. We are done with the proof.
\end{proof}

The next lemma is a consequence of Lemma \ref{lem7} and \ref{Gxi}.
\begin{lemma}\label{lem8}
$f^{+}(c)=f^+(c_1)$ for almost all $c\in\mathcal{\widetilde{G}}_{rec}(U,V)$.
\end{lemma}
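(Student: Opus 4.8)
**Proof proposal for Lemma 5.11 (i.e.\ the statement "$f^{+}(c)=f^{+}(c_{1})$ for almost all $c\in\mathcal{\widetilde G}_{rec}(U,V)$").**

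The plan is to exploit the stable-manifold convergence of Lemma \ref{lem7} to transport the Birkhoff average $f^{+}$ along strong stable manifolds, together with the full-measure set $G_{\xi_1}$ produced by Lemma \ref{Gxi}. First I would fix the recurrent rank $1$ geodesic $c_{1}$ with $c_{1}(-\infty)=\xi_1$ supplied by Lemma \ref{Gxi}, so that $\mu_{p}(G_{\xi_1})=\mu_{p}(V)$. Now take an arbitrary $\eta\in G_{\xi_1}$: by definition there is a recurrent geodesic $c\in\mathcal{\widetilde G}_{rec}(U,V)$ with $c(-\infty)=\xi_1=c_{1}(-\infty)$ and $c(+\infty)=\eta$. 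Since $c$ and $c_{1}$ are negatively asymptotic (they share the negative endpoint $\xi_1$ and lie in a simply connected manifold without focal points), a suitable reparametrization $w:=c'(s_{0})$ of a vector on $c$ lies in the strong stable manifold $W^{s}(c_{1}'(0))$ — more precisely, one chooses $s_{0}$ so that $b_{\pi c_{1}'(0)}(\pi w,\xi_1)=0$, using that the Busemann function $b_{p}(\cdot,\xi_1)$ is realized along both geodesics and that horospheres centered at $\xi_1$ are the common level sets.

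The key step is then: apply Lemma \ref{lem7} to the recurrent rank $1$ vector $v=c_{1}'(0)$ and the vector $w\in W^{s}(v)$, obtaining $d_{1}(\phi^{t}v,\phi^{t}w)\to 0$ as $t\to+\infty$. Since $f$ is continuous (hence uniformly continuous on the compact $SM$) and $\Gamma$-invariant, the Cesàro averages of $f$ along $\phi^{t}v$ and along $\phi^{t}w$ have the same limit whenever either exists; concretely, $\bigl|\frac1T\int_0^T f(\phi^{t}v)\,dt-\frac1T\int_0^T f(\phi^{t}w)\,dt\bigr|\le \frac1T\int_0^T |f(\phi^{t}v)-f(\phi^{t}w)|\,dt\to 0$ because the integrand tends to $0$. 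Hence $f^{+}(c)=f^{+}(w)=f^{+}(v)=f^{+}(c_{1})$. This identity holds for every $\eta\in G_{\xi_1}$ for which the forward Birkhoff average $f^{+}(c)$ is defined; but $c\in\mathcal{\widetilde G}_{rec}(U,V)$ already guarantees $c'$ is recurrent and $f^{+}(c)=f^{-}(c)$ is well defined. Therefore $f^{+}\equiv f^{+}(c_{1})$ on the set of geodesics in $\mathcal{\widetilde G}_{rec}(U,V)$ whose negative endpoint is $\xi_1$, which by Lemma \ref{Gxi} accounts for $\mu_{p}$-a.e.\ positive endpoint $\eta\in V$; combined with the fact that $\mu$ restricted to $\mathcal{G}'(U,V)$ disintegrates over $\mu_{p}\times\mu_{p}$ (via the formula defining $\mu$ and $\overline\mu$), this yields $f^{+}(c)=f^{+}(c_{1})$ for $\mu$-almost all $c\in\mathcal{\widetilde G}_{rec}(U,V)$.

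The main obstacle I anticipate is the bookkeeping needed to put a genuine vector of $c$ into the \emph{strong} stable manifold $W^{s}(c_{1}'(0))$ — Lemma \ref{lem7} is stated for $w\in W^{s}(v)$, not merely for negatively asymptotic geodesics, so one must verify that negatively asymptotic geodesics with a common endpoint $\xi_1$ indeed give, after reparametrization, a vector in $W^{s}$, using Ruggiero's result (cited in the proof of Lemma \ref{lem7}) that asymptotic geodesics in no-focal-point manifolds are tracked by the gradient of the Busemann function, and that the Busemann function based at $\pi c_{1}'(0)$ assigns the value $0$ to exactly one point of $c$. A secondary, more routine point is ensuring the exceptional $\mu_{p}$-null set from Lemma \ref{Gxi} and the $\mu$-null set where $f^{+}$ fails to exist interact correctly under the disintegration; this is handled by Fubini together with the equivalence of $\mu_{p}$ and $\mu_{q}$ and the boundedness of the Gromov cocycle $\beta_{p}$ already established before the statement.
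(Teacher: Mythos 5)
There is a genuine gap, and it comes from conflating the generic geodesic $c$ of the lemma with the special geodesic supplied by the definition of $G_{\xi_1}$. You take "an arbitrary $\eta\in G_{\xi_1}$" and then let $c$ be the recurrent geodesic with $c(-\infty)=\xi_1$ and $c(+\infty)=\eta$; everything you prove afterwards therefore concerns only geodesics whose \emph{negative} endpoint is exactly $\xi_1$. That set is a single fiber $\{\xi_1\}\times G_{\xi_1}$ of the product structure, hence $\overline{\mu}$-null (since $\mu_p$ has no atom at $\xi_1$), and the closing appeal to the disintegration of $\mu$ over $\mu_p\times\mu_p$ cannot upgrade a statement on a null fiber to a statement about $\mu$-almost all $c\in\widetilde{\mathcal{G}}_{rec}(U,V)$, whose negative endpoints range over essentially all of $U$. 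The missing idea is the Hopf-argument chain through an \emph{intermediate} geodesic: for a generic $c$ with $c(-\infty)=\xi$ arbitrary in $U$ and $c(+\infty)=\eta\in G_{\xi_1}$, one first produces $c_2\in\widetilde{\mathcal{G}}_{rec}(U,V)$ with $c_2(-\infty)=\xi_1$ and $c_2(+\infty)=\eta$; the pair $(c,c_2)$ is \emph{positively} asymptotic, so Lemma \ref{lem7} gives $f^+(c)=f^+(c_2)$; then the identity $f^+(c_2)=f^-(c_2)$ (membership in $\widetilde{\mathcal{G}}_{rec}$) switches to backward averages; and only then does the \emph{negatively} asymptotic pair $(c_2,c_1)$ give $f^-(c_2)=f^-(c_1)=f^+(c_1)$. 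Your proof carries out only the last leg of this chain and omits both the transverse (positively asymptotic) step and the crucial interchange $f^+=f^-$.

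A secondary error compounds this: two geodesics sharing the negative endpoint $\xi_1$ are asymptotic as $t\to-\infty$, so after reversal they give a vector in $W^s(-c_1'(0))$, i.e.\ the strong \emph{unstable} manifold of $c_1'(0)$, not in $W^s(c_1'(0))$ as you assert. Applying Lemma \ref{lem7} to the reversed vectors yields $d_1(\phi^{-t}c',\phi^{-t}c_1')\to 0$ and hence equality of the \emph{backward} averages $f^-$, not the asserted chain $f^+(c)=f^+(w)=f^+(v)=f^+(c_1)$; passing from $f^-$ to $f^+$ again requires the interchange on $\widetilde{\mathcal{G}}_{rec}$, which your argument never uses.
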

\begin{proof}
We follow the idea of the proof of Lemma 4.2 in \cite{Kn1}. Since we know that $\mu_{p}(G_{\xi_1})=\mu_{p}(V)$, then almost every $c\in \mathcal{\widetilde{G}}_{rec}(U,V)$ satisfies that $c(+\infty)\in G_{\xi_1}$. By the definition of $G_{\xi_1}$, we know that there is a geodesic $c_2\in \mathcal{\widetilde{G}}_{rec}(U,V)$ with $c_2(-\infty)=c_1(-\infty)=\xi_1$ and $c_2(+\infty)=c(+\infty)$. Then by Lemma \ref{lem7}, after a re-parametrization, we have $d_1(c_2(t),c(t))\rightarrow 0$ as $t\rightarrow +\infty$. This implies that $f^+(c)=f^+(c_2)~(=f^-(c_2))$. Similarly we have $f^-(c_2)=f^-(c_1)~(=f^+(c_1))$. Therefore we get $f^+(c)=f^+(c_1)$ for almost all $c\in\mathcal{\widetilde{G}}_{rec}(U,V)$.
\end{proof}

Now, we are ready to prove the ergodicity of $\mu$.
\begin{theorem}\label{ergodic}
The geodesic flow $\phi^{t}$ is ergodic with respect to the measure $\mu$.
\end{theorem}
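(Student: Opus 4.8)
The plan is to prove that every function in $L^2(SM,\mu)$ which is invariant under the geodesic flow is $\mu$-a.e.\ constant. Since continuous functions on $SM$ (equivalently, continuous $\Gamma$-invariant functions on $SX$) are dense in $L^2(SM,\mu)$, and the Birkhoff-average operator $f\mapsto f^+=E(f\mid\mathcal I)$ is a contraction whose image is dense in the space of flow-invariant functions, it suffices to show: for every continuous $\Gamma$-invariant $f\colon SX\to\mathbb R$, the forward average $f^+$ is $\mu$-a.e.\ equal to a constant. Throughout I would use that $f^+=f^-$ $\mu$-a.e.\ (Birkhoff's theorem for the finite $\phi$-invariant measure $\mu$, together with the fact that a set invariant under all $\phi^t$ is insensitive to the direction of time), that $\mu$-a.e.\ $v$ is recurrent in both time directions (Poincar\'e recurrence), and that $\mu$ gives full measure to the set of recurrent rank $1$ vectors.

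First I would fix a rank $1$ axis $a$ of some $\alpha_0\in\Gamma$ and open neighbourhoods $U\ni a(-\infty)$, $V\ni a(+\infty)$ in $X(\infty)$ as in Proposition \ref{pro6}, so that every geodesic in $\mathcal G(U,V)$ is rank $1$; by Proposition \ref{pro1} the set $\mathcal G'(U,V)\subset SX$ is open, and by Proposition \ref{pro12} it has positive $\mu$-measure. Lemmas \ref{lem7}, \ref{Gxi} and \ref{lem8}, together with the vanishing of $\mu\big(\mathcal G'(U,V)\setminus\widetilde{\mathcal G}'_{rec}(U,V)\big)$ noted just before Lemma \ref{Gxi}, then yield a constant $c_{U,V}:=f^+(c_1)$ with $f^+\equiv c_{U,V}$ $\mu$-a.e.\ on $\mathcal G'(U,V)$. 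Moreover Lemma \ref{lem7} shows that $f^+$ is constant on weak stable leaves, i.e.\ on sets of geodesics sharing a forward endpoint; hence on a set of full $\mu$-measure $f^+$ descends to a measurable function $F^+$ on $X(\infty)$ with $f^+(v)=F^+(\gamma_v(+\infty))$. Combining this with the previous identity, $F^+=c_{U,V}$ $\mu_p$-a.e.\ on the nonempty open set $V$.

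Next I would globalize. Since $f$ is $\Gamma$-invariant we have $f^+\circ d\alpha=f^+$, hence $F^+(\alpha\eta)=F^+(\eta)$ for every $\alpha\in\Gamma$, so $F^+$ is $\Gamma$-invariant modulo $\mu_p$-null sets; as the measure class of $\mu_p$ is $\Gamma$-quasi-invariant, $F^+=c_{U,V}$ $\mu_p$-a.e.\ on $\alpha V$ for every $\alpha\in\Gamma$. The set $\bigcup_{\alpha\in\Gamma}\alpha V$ is open, nonempty and $\Gamma$-invariant, hence equals $X(\infty)$ by the minimality of the $\Gamma$-action on $X(\infty)$ (Proposition \ref{pro8}); since $\Gamma$ is countable, $F^+=c_{U,V}$ $\mu_p$-a.e.\ on $X(\infty)$, and therefore $f^+=c_{U,V}$ $\mu$-a.e.\ on $SX$. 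By the reduction of the first paragraph, $\mu$ is ergodic.

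The main obstacle is the local step of the second paragraph: showing that $f^+$ is $\mu$-a.e.\ constant on a single rank $1$ flow box $\mathcal G'(U,V)$ and, in particular, that it descends to a function of the forward endpoint on a full-$\mu$-measure set. This is exactly where one needs the stable-manifold contraction of Lemma \ref{lem7} to be chainable along geodesics that share an endpoint (through the auxiliary geodesics $c_1,c_2$ of Lemmas \ref{Gxi}–\ref{lem8}), and where the rank $1$ geometry of Section \ref{geometric} (Propositions \ref{pro4}–\ref{pro6} and Lemma \ref{pro7}), the full support of the Patterson–Sullivan measure (Proposition \ref{pro12}), and the concentration of $\mu$ on recurrent rank $1$ vectors all enter. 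The globalization, by contrast, is comparatively soft once minimality of the boundary action is in hand, since it avoids needing any double transitivity of $\Gamma$ on pairs of boundary points.
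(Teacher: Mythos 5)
Your proposal is correct and follows essentially the same route as the paper: reduce to showing $f^{+}$ is a.e.\ constant for continuous $\Gamma$-invariant $f$, use Lemmas \ref{lem7}--\ref{lem8} to make $f^{+}$ constant on the rank $1$ flow box $\mathcal G'(U,V)$ up to a $\mu$-null set, and then spread this over $X(\infty)$ by the $\Gamma$-action. The only cosmetic difference is that you phrase the globalization via minimality of the boundary action (Proposition \ref{pro8}) and quasi-invariance of $\mu_p$, whereas the paper works directly with $Y=\bigcup_{\alpha\in\Gamma}\alpha(\widetilde V)$ and Lemma \ref{pro7}; these are the same argument.
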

\begin{proof}
It is sufficient to prove that for any continuous $\Gamma$-invariant function $f:SX \longrightarrow \mathbb{R}$, the function $f^+$ is constant almost everywhere. Let $$\widetilde{V}=\{\eta \in V \mid \exists~c \in \mathcal{G}_{rec}(U,V)~\mbox{with}~\eta=c(+\infty)\}.$$
By Lemma \ref{lem7} and Lemma \ref{lem8} we know $f^{+}(c)=f^+(c_1)$ for almost all geodesics $c$ with $c(+\infty)\in \widetilde{V}$. Let $\mathcal{E}_1 = \mathcal{G}(U,V)- \mathcal{G}_{rec}(U,V)$ and $\mathcal{E}'_1=\{c'(t) \mid c\in\mathcal{E},~t\in\mathbb{R}\}$. Moreover, for each $\xi\in U$, we define $\widetilde{V}_{\xi}^{co}:= \{\eta\in V \mid \exists~c \in \mathcal{E}_1~\mbox{with}~c(-\infty)=\xi,~c(+\infty)=\eta \}$.
Then similar to our calculation in the proof of Lemma \ref{Gxi}, we have
\begin{eqnarray*}
0 = \mu(\mathcal{E}'_1)
\geq\int_{\xi\in U}\left(\int_{\widetilde{V}^{co}_{\xi}}d\mu_{p}(\eta)\right)d\mu_{p}(\xi).
\end{eqnarray*}
Thus for $\mu_{p}$-a.e.~$\xi \in U$, $\mu_{p}(\widetilde{V}^{co}_{\xi})=0$. Let $\widetilde{V}_{\xi}=\{\eta\in V \mid  \exists~c \in \mathcal{G}_{rec}(U,V)~\mbox{with}~ c(-\infty)=\xi,~c(+\infty)=\eta\}$.
Then for $\mu_{p}$-a.e.~$\xi \in U$, we  have $\mu_{p}(\widetilde{V}_{\xi})=\mu_{p}(V)$.
Since $\widetilde{V}_{\xi} \subset \widetilde{V} \subset V$,
We know $\mu_{p}(\widetilde{V})=\mu_{p}(V)$.

Let $Y = \bigcup_{\alpha \in \Gamma}\alpha(\widetilde{V})$. Recall that $U,~V$ are neighborhoods of $a(-\infty)$ and $a(+\infty)$ respectively, where $a$ is a rank $1$ axis. Then by Lemma \ref{pro7},
one can see that $\mu_{p}(Y)=\mu_{p}(X(\infty))$.
This implies that $Z := \{c'(t) \mid c(+\infty)\in Y, ~t\in\mathbb{R}\}$ is a $\mu$-full measure set in $SX$. Since $f$ is a $\Gamma$-invariant function,
$f^{+}$ is also $\Gamma$-invariant. Therefore,
 $f^{+}=f^+(c_1)$ $\mu$-a.e.~on $Z$. This implies that $f^+$ is constant $\mu$-almost everywhere on $SX$.
 So the geodesic flow $\phi^{t}$ is ergodic with respect to the measure $\mu$.
\end{proof}

\subsection{Uniqueness of the maximal entropy measure of geodesic flow}\label{proof of uniqueness}

In this subsection, we will prove the measure $\mu$ is the unique maximal entropy measure, i.e.~for any $\phi$-invariant probability measure $\nu\neq\mu$, we have $h_{\nu}(\phi)<h_{\mu}(\phi)=h_{\text{top}}(g)$.

For  any $\phi$-invariant probability measure $\nu\neq\mu$, there is a unique decomposition
$$\nu = b \nu_{\text{sing}} + (1-b) \nu_{\text{abs}}, ~~  b \in [0,1],$$ where $\nu_{\text{abs}}$ is an $\phi$-invariant measure which is absolutely continuous with respect to $\mu$, and $\nu_{\text{sing}}$ is an $\phi$-invariant measure which is singular with respect to $\mu$.

We claim that $\nu_{\text{abs}}=\mu$.

Let $d\nu_{\text{abs}}(v)=f(v)d\mu(v)$ with $f\in L^1(SM,\mu)$ and $f\geq 0$. For any measurable set $A\subset SM$ and $t\in\mathbb{R}$ we have
$$\int_{A}f d\mu=\int_A d\nu_{\text{abs}}=\int_{\phi^{t}(A)}d\nu_{\text{abs}}=\int_{\phi^{t}(A)}f d\mu=\int_{A}f\circ \phi^{t} d\mu.$$
This leads to the fact that for any $t\in\mathbb{R}$,  $f\circ \phi^{t}(v)\equiv f(v)$,  $\mu$-a.e.~$v\in SM$. Since $\mu$ is ergodic, we know $f(v)=1$ for $\mu$-a.e.~$v\in SM$. Therefore $\nu_{\text{abs}}=\mu$.
Moreover, by the formula $$h_{\nu}(\phi)=bh_{\nu_{\text{sing}}}(\phi)+(1-b)h_{\mu}(\phi)
=bh_{\nu_{\text{sing}}}(\phi)+(1-b)h_{\text{top}}(g),$$ we know that, to prove the uniqueness of the maximal entropy measure, we just need to prove $h_{\nu}(\phi)<h_{\text{top}}(g)$ for all $\phi$-invariant measure $\nu$ which is singular with respect to $\mu$.
In the following, we will prove this inequality. We remark that our proof follows the idea of Knieper \cite{Kn1}.

Take a compact fundamental domain $\mathcal{F} \subset X$ and a fixed reference point $p\in\mathcal{F}$.
Let $R'>2R+\frac{1}{2}$ be a constant such that $\mathcal{F}\subset B(p,R')$,
where $R>0$ is the constant defined in Proposition \ref{pro13}. Consider the unit tangent bundle $SB(p,R')$ of $B(p,R')$.
Let $$D(x,R',R)=\{v \in SB(p,R')\mid \exists~r >0 ~~s.t. ~~\gamma_{v}(r)\in B(x,R)\}.$$
Proposition \ref{pro13} implies that for any $x\in\X$ with $d(p,x)>R'+R$ there is a constant $c'>0$ which is independent of $x$
such that $\mu(D(x,R',R))\geq c'e^{-hd(p,x)}$ (cf. \cite{Kn1}, Lemma 5.1). Then we have the following lemma:

\begin{lemma}\label{lem3}
Suppose $x\in X$ satisfies $d(p,x)\geq R'+R+n$, with $n \in \mathbb{Z}^+$. Then there is a constant $a=a(\delta,R',R)>0$ such that the cardinality of any $(d_{n},\delta)$-separated set of $D(x,R',R)$ is bounded by $a$.
\end{lemma}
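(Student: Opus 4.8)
The statement to be proved is a uniform bound on $(d_n,\delta)$-separated subsets of $D(x,R',R)$ that is independent of $x$, provided $d(p,x)\geq R'+R+n$. My plan is to follow the scheme of Knieper in \cite{Kn1}, Lemma 5.2: translate the $(d_n,\delta)$-separation condition into a statement about disjointness of images of small balls and then exploit the lower bound $\mu(D(x,R',R))\geq c'e^{-hd(p,x)}$ together with Proposition \ref{pro13}(2) to compare measures, obtaining the bound. The key point driving everything is that any $v\in D(x,R',R)$ determines a geodesic ray entering $B(x,R)$ in finite forward time, so that after flowing for time $r_v$ one lands near $x$; since $d(p,x)$ is large compared with $n$, the first time the ray is near $x$ occurs at a time $r_v$ which is comparable to $d(p,x)$, hence at least $n$ up to a fixed additive error.

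First I would fix $v,w\in D(x,R',R)$ which are $(d_n,\delta)$-separated, i.e. $\max_{t\in[0,n]}d(\gamma_v(t),\gamma_w(t))\geq\delta$. Using Lemma \ref{Ka} (or directly Lemma \ref{OS}, since on no-focal-points manifolds the distance of two geodesics starting from controlled points cannot have an interior maximum), the separation at some $t_0\in[0,n]$ propagates: there is a constant $\delta'=\delta'(\delta,R',R)>0$ such that $d(\gamma_v(r),\gamma_w(r))\geq\delta'$ for $r$ near the entry time into $B(x,R)$, or more simply, the points $\gamma_v(r_v)$ and $\gamma_w(r_w)$ (both in $B(x,R)$) are reached along geodesics whose initial directions at $p$ — after projecting — are $\delta''$-separated in the appropriate projected sense. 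Concretely I would look at the projections $pr_\xi$ from Subsection \ref{proj}: since $v,w$ both point into $B(x,R)$, their negative endpoints $\xi_v=\gamma_v(-\infty),\xi_w=\gamma_w(-\infty)$ together with the position $x$ control the sets $pr_{\xi_v}B(x,R)$; the $(d_n,\delta)$-separation forces the corresponding projected balls $pr_p B(\gamma_v(r_v),\rho)$ and $pr_pB(\gamma_w(r_w),\rho)$ to be disjoint for a suitable fixed $\rho=\rho(\delta)$. This is exactly the mechanism in Knieper's argument, and the no-focal-points geometry we need (continuity to infinity, the projection continuity of Proposition \ref{pro3.2}, and the semi-local estimate Proposition \ref{pro13}) is now available.

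Next, for each element $v$ of a $(d_n,\delta)$-separated set $S\subset D(x,R',R)$, Proposition \ref{pro13}(3) gives
$$\mu_p\big(pr_p B(\gamma_v(r_v),\rho)\big)\geq \frac{1}{b(\rho)}\,e^{-h\,d(p,\gamma_v(r_v))}\geq \frac{1}{b(\rho)}\,e^{-h(d(p,x)+R)},$$
using $d(p,\gamma_v(r_v))\leq d(p,x)+R$. Since these projected sets are pairwise disjoint over $v\in S$ and all lie inside $X(\infty)$, summing gives
$$\sharp S\cdot \frac{1}{b(\rho)}e^{-h(d(p,x)+R)}\leq \mu_p(X(\infty))\leq e^{(h+1)K},$$
by Proposition \ref{pro11}. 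This already yields $\sharp S\leq b(\rho)e^{(h+1)K}e^{hR}\,e^{h\,d(p,x)}$, which is the wrong direction — it grows with $d(p,x)$. To fix this I instead compare with the lower bound on $\mu(D(x,R',R))$: Knieper's trick is to realize that the disjoint projected balls sit inside a \emph{measure-}$O(e^{-h d(p,x)})$ set rather than in all of $X(\infty)$. Precisely, the sets $pr_p B(\gamma_v(r_v),\rho)$ for $v\in D(x,R',R)$ are all contained in $pr_pB(x,R+\rho)$, and Proposition \ref{pro13}(3) bounds $\mu_p\big(pr_pB(x,R+\rho)\big)\leq b(R+\rho)e^{-h\,d(p,x)}$. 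Dividing, the $e^{-h d(p,x)}$ factors cancel and we get
$$\sharp S\leq b(\rho)\,b(R+\rho)\,e^{hR}=:a(\delta,R',R),$$
a constant independent of $x$ and $n$, as required.

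The main obstacle I expect is the \emph{geometric} step turning $(d_n,\delta)$-separation into disjointness of the projected balls $pr_pB(\gamma_v(r_v),\rho)$ with a uniform $\rho$ depending only on $\delta$ (and $R',R$), together with the bookkeeping that the relevant entry times $r_v$ satisfy $r_v\geq n$ up to a fixed error so that separation on $[0,n]$ is actually seen along the geodesic before it reaches $B(x,R)$. In nonpositive curvature Knieper uses convexity of the distance between geodesics; here one must substitute the monotonicity/no-maximum statements of Lemma \ref{OS} and Lemma \ref{Ka}, exactly as the authors do throughout Section \ref{geometric}. Once that comparison is in place, the measure-theoretic conclusion is a short computation using Propositions \ref{pro11} and \ref{pro13} as above.
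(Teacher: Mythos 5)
Your measure-theoretic strategy has a genuine gap at precisely the step you flag as the main obstacle: $(d_n,\delta)$-separation of $v,w\in D(x,R',R)$ does \emph{not} force the projected balls $pr_{p}B(\gamma_v(r_v),\rho)$ and $pr_{p}B(\gamma_w(r_w),\rho)$ to be disjoint for any $\rho=\rho(\delta)>0$. The separation $\max_{t\in[0,n]}d(\gamma_v(t),\gamma_w(t))\geq\delta$ may already be realized at $t=0$ (the footpoints range over all of $B(p,R')$), and two such geodesics can nevertheless enter $B(x,R)$ through the very same point, in which case the two projected balls coincide. The no-focal-points estimates go in the wrong direction for what you need: Lemma \ref{Ka} gives $d(\gamma_v(a),\gamma_w(a))\geq d(\gamma_v(t_0),\gamma_w(t_0))-d(\gamma_v(0),\gamma_w(0))\geq\delta-2R'$, which is vacuous for small $\delta$, so there is no mechanism propagating an interior separation of size $\delta$ forward to the entry point into $B(x,R)$. (The disjointness-of-shadows mechanism you invoke is the one used in Subsection \ref{compute entropy} to prove $\mu(\alpha)\leq ae^{-hn}$, where the shadows are taken along orbits that stay $\epsilon$-close on all of $[0,n]$ --- a different situation.) Once disjointness fails, the cancellation of the two $e^{-h\,d(p,x)}$ factors collapses and no bound independent of $x$ is obtained.

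The lemma is in fact elementary and needs no measure theory. The paper's proof (following Knieper) takes maximal $\frac{\delta}{8}$-separated sets $\{q_i\}\subset B(p,R')$ and $\{p_j\}\subset B(x,R)$, whose cardinalities $k\leq K(\delta,R')$ and $m\leq M(\delta,R)$ are uniform in $p$ and $x$ by compactness of $M$, and shows that the geodesics $c_{ij}$ from $q_i$ to $p_j$ form a $(d_n,\frac{\delta}{2})$-spanning set of $D(x,R',R)$: if $c(0)\in B(q_i,\frac{\delta}{8})$ and $c(T)\in B(p_j,\frac{\delta}{8})$ with $T\geq n$ (which follows from $d(p,x)\geq R'+R+n$), then Lemma \ref{Ka} pinches $d(c(t),c_{ij}(t))\leq\frac{\delta}{2}$ for all $t\in[0,T]$. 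Hence any $(d_n,\delta)$-separated set has at most $km\leq K\cdot M=:a(\delta,R',R)$ elements. Note that Lemma \ref{Ka} is used there in the correct direction: closeness at \emph{both} endpoints controls the whole intermediate segment, rather than separation at one interior time controlling a later time. If you wish to keep a measure-theoretic flavor you would have to redesign the argument entirely; as written, the key disjointness claim is false.
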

\begin{proof}
Take a maximal $\frac{\delta}{8}$-separated set $\mathcal{S}_1=\{q_{1},q_{2},\cdots,q_{k}\}$ of $B(p,R')$, and a maximal $\frac{\delta}{8}$-separated set $\mathcal{S}_2=\{p_{1},p_{2},\cdots,p_{m}\}$ of $B(x,R)$.
By the compactness of $M=X/\Gamma$, $k$ and $m$ are bounded by a pair of constants $K=K(\delta,R')$ and $M=M(\delta, R)$ for all $p,~x\in X$.
Let $c_{ij}$ be the unique geodesic with $c_{ij}(0)=q_{i}$ and $c_{ij}(d(q_i,p_j))=p_{j}$.

We claim that $\{c'_{ij}(0)\}_{i=1,\cdots,k,~j=1,\cdots,m}$ is a $(d_n,\frac{\delta}{2})$-spanning set of $D(x,R',R)$.

Let $c: [0,T]\rightarrow X$ be an arbitrary geodesic with $c(0)\in B(p,R'),$  and $c(a)\in B(x,R)$.
Then by the definition of $\mathcal{S}_1$ and $\mathcal{S}_2$, we can find $q_{i}\in \mathcal{S}_1$ and $p_{j}\in \mathcal{S}_2$,
such that $c(0)\in B(q_{i},\frac{\delta}{8})$ and $c(T)\in B(p_{j},\frac{\delta}{8})$.
This implies that $T-\frac{\delta}{4}\leq d(q_{i},p_{j})\leq T+\frac{\delta}{4}$,
and then $d(c(T), c_{ij}(T))\leq \frac{3\delta}{8}$. Since $M=X/\Gamma$ is a compact rank $1$ manifold without focal points, by Lemma \ref{Ka}, we know that
\begin{equation}\label{equ in lem3}
d(c(t),c_{ij}(t))\leq d(c(0),c_{ij}(0))+d(c(T),c_{ij}(T))\leq \frac{\delta}{8}+\frac{3\delta}{8}=\frac{\delta}{2}, ~~t\in [0,T].
\end{equation}
Since $d(x,p)\geq n + R + R'$, then $T \geq n$. So \eqref{equ in lem3} implies that $$D(x,R',R)\subset \bigcup_{i=1,\cdots,k,~j=1,\cdots,m} B_{d_{n}}(c'_{ij}(0),\frac{\delta}{2}).$$
This means $\{c'_{ij}(0)\}_{i=1,\cdots,k,~j=1,\cdots,m}$ is a $(d_n,\frac{\delta}{2})$-spanning set of $D(x,R',R)$. Its cardinality is $km\leq K(\delta,R')\cdot M(\delta,R)$. Then the cardinality of a  $(d_n,\delta)$-separated set $D(x,R',R)$  is also bounded by $a:=K(\delta,R')\cdot M(\delta,R)$, which depends only on $\delta$, $R$, and $R'$.
\end{proof}

Let $r(n)=2n+R+R'$. We use $S(p,r(n))$ to denote the geodesic sphere centered at $p$ with radius $r(n)$.
Suppose $\{y_{1},y_{2},\cdots,y_{k(n)}\}$ is a maximal $2R$-separated set of the geodesic sphere $S(p,r(n))$.
Thus the balls $\{B(y_{i},R)\mid i=1,2,...,k(n)\}$ are pairwise disjoint and
$$S(p,r(n))\subset \bigcup^{k(n)}_{i=1}B(y_{i},2R).$$
These results imply that $\{D(y_{i},R',R)\}^{k(n)}_{i=1}$ are also pairwise disjoint,
and $$SB(p,R')\subset \bigcup^{k(n)}_{i=1}D(y_{i},R',2R).$$

Therefore, we can find a partition $\{F_{1}^{2n},F_{2}^{2n},\cdots,F_{k(n)}^{2n}\}$ of $SB(p,R')$, satisfying
$$D(y_{i},R',R) \subset F_{i}^{2n} \subset D(y_{i},R',2R), ~~i=1,2,\cdots,k(n).$$
Consider the standard projection $pr:SX\rightarrow SM$. Let $L_{i}^{2n}= pr(F_{i}^{2n})$, $i=1,\cdots,k(n)$.
Since the fundamental domain $\mathcal{F}\subset B(p,R')$,
we know $L^{2n} =\{L_{1}^{2n},L_{2}^{2n},\cdots, L_{k(n)}^{2n}\}$ is a cover of $SM$.

Next, similar as the Lemma 5.3 in \cite{Kn1}, we show that:
(1) $pr|_{SB(p,R')}$ is an at most $\alpha$-to-$1$ map, where $\alpha$ is a constant depending only on $R'$;
(2) there is a constant  $\beta > 0$, which is independent of $n$, such that $\mu(L_{i}^{2n}) \geq \beta e^{-2hn}$, for all $i=1,\cdots,k(n)$.

In fact, let $\alpha:=\sharp\{\gamma\in \Gamma \mid \gamma(\mathcal{F})\cap B( p,R')\neq \emptyset\}$,
then $pr$ is an at most $\alpha$-to-$1$ map. Thus for any measurable set $A\subset SB( p,R')$,
we have $\alpha \cdot \mu(pr(A)) \geq \mu(A)$. Choose $A=D(y_{i},R',R)$, then
\begin{eqnarray*}
\mu(L^{2n}_{i})& = &\mu(pr(F^{2n}_{i})) \geq \frac{1}{\alpha}\mu(F^{2n}_{i})\\
& \geq & \frac{1}{\alpha}\mu(D(y_{i},R',R))\geq \frac{1}{\alpha}c'e^{-h\cdot d(p,y_{i})}\\
& \geq & \frac{1}{\alpha}c'e^{-h\cdot (2n+R+R')}:=\beta\cdot e^{-2hn},
\end{eqnarray*}
where $c'$ appeared in the paragraph above the Lemma \ref{lem3}.

Let $\epsilon = \frac{1}{6}\mbox{Inj}(M)$. Suppose $V=\{v_{1},v_{2},\cdots,v_{m}\}$ is a maximal $(d_{2n},\epsilon)$-separated set of $SM$.
We can choose a partition $\mathcal{B}^{n}$ of $SM$ such that, for each $B \in \mathcal{B}^{n}$, there is an element $v_{i} \in V$ satisfying that
$$B_{d_{2n}}(v_{i},\frac{\epsilon}{2}) \subset B \subset B_{d_{2n}}(v_{i},\epsilon).$$
For each $L^{2n}_{i}$, $i=1,\cdots,k(n)$, it is obvious that
$$\sharp\{B \in\mathcal{B}^n \mid B\cap L^{2n}_i\neq\emptyset\}\leq \sharp\{v_j\in V \mid B_{d_{2n}}(v_{j},\epsilon)\cap L^{2n}_i\neq\emptyset\}.$$
Since the set $\{v_j\in V \mid B_{d_{2n}}(v_{j},\epsilon)\cap L^{2n}_i\neq\emptyset\}$ can be lifted to a $(d_{2n},\epsilon)$-separated set of $F^{2n}_i\subset D(y_{i},R',2R)\subset SX$, by Lemma \ref{lem3} we know that $\sharp\{v_j\in V \mid B_{d_{2n}}(v_{j},\epsilon)\cap L^{2n}_i\neq\emptyset\}\leq a(2\epsilon, R',2R)$.
Therefore we get
\begin{equation*}\label{B intersect L2n}
\sharp\{B \in\mathcal{B}^n \mid B\cap L^{2n}_i\neq\emptyset\}\leq a(2\epsilon, R',2R),~\forall~i=1,\cdots,k(n).
\end{equation*}

For each $n\in \mathbb{Z}^+$, Let $A^{n}= \{\phi^{n}L_{i}^{2n}\mid i=1,\cdots, k(n)\}$.
Based on the definition of $L^{2n}$, it is easy to check that for each $A \in A^{n}$ and $u,v \in A$, there is a continuous curve $l : [0,1]\rightarrow SM$ connecting $u$ and $v$£¬
such that for all $t \in [-n,n]$, we have $\text{length}(\pi \circ \phi^{t}(l))\leq R'$.

Following the discussion in Theorem \ref{ergodic}, we know that the regular set in $SM$ has positive $\mu$-measure.
Then by the ergodicity of geodesic flow with respect to $\mu$, we get $\mu(\text{sing})=0$.
The following technical lemma is Lemma 5.5 of \cite{Kn1}. See also Lemma 9.6 of \cite{Kn2}. It holds because we still have the flat strip lemma in no focal point case.
\begin{lemma}
Let $\nu$ be a Borel probability measure on $SM$. For a fixed constant $b>0$, let $A^n$ be
a sequence of measurable coverings of $SM$ such that
for each $n$ and $v, w\in A\in A^n$ there exists a continuous curve $\alpha: [0,1]\to SM$ with
$\alpha(0)=v, \alpha(1)=w$, and $Lg(\pi \phi^t(\alpha))\leq b$ for all $t\in [-n,n]$. Let $\Omega\neq SM$ be a set containing all
singular vectors. Then there exists a union $C_n$ of subsets of $A^n$ such that
$\nu(\Omega \bigtriangleup C_n):=\nu(C_n\setminus \Omega)+ \nu(\Omega \setminus C_n)\to 0$.
\end{lemma}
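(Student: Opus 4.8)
The plan is to follow Knieper's original argument in \cite{Kn1} (Lemma 5.5) and \cite{Kn2} (Lemma 9.6) step by step, checking that every use of nonpositive curvature can be replaced by a tool already established in this paper. The rough idea is the following. One wants to "thicken" the singular set $\Omega$ to a set $C_n$ which is a union of elements of the covering $A^n$, without destroying too much measure. The obstruction to doing this naively is that an element $A\in A^n$ may meet both $\Omega$ and its complement; but the defining property of $A^n$ is that any two vectors $v,w$ in a common $A$ are joined by a curve $\alpha$ whose forward and backward iterates up to time $n$ stay of bounded length $b$ under $\pi\phi^t$. The heart of the matter is therefore: if $v$ is singular and $w$ lies in the same $A$, then $w$ is "almost singular" in the sense that it is $\epsilon_n$-close to $\Omega$ for some $\epsilon_n\to 0$.

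First I would make precise the notion that vectors connected by such short curves are asymptotically close to singular vectors. Recall that $v\in SM$ is singular iff its lift bounds a flat strip (equivalently has a parallel perpendicular Jacobi field), and by the flat strip lemma — which, as emphasized in the excerpt, remains valid in the no-focal-point setting — the set $\mathrm{sing}$ is closed and invariant. Suppose $v_k\in\Omega$ and $w_k$ lies with $v_k$ in a common element of $A^{n_k}$ with $n_k\to\infty$. Passing to the universal cover, choose lifts $\tilde v_k,\tilde w_k$ with footpoints within a bounded distance (controlled by $b$), connected by a curve whose $\pi\phi^t$-image has length $\le b$ for $|t|\le n_k$. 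Using Lemma \ref{Ka} (the no-focal-point convexity substitute), the geodesics $\gamma_{\tilde v_k}$ and $\gamma_{\tilde w_k}$ stay within distance $\le 2b$ of each other on the whole interval $[-n_k,n_k]$. Then, after applying deck transformations to bring footpoints into a fixed fundamental domain and passing to a subsequence, $v_k\to v_\infty$ and $w_k\to w_\infty$ with $\gamma_{v_\infty}$ and $\gamma_{w_\infty}$ biasymptotic (they stay within $2b$ for all time); by the flat strip lemma $v_\infty$ is singular, hence $w_\infty\in\Omega$ since $\Omega$ is closed. This shows: for every $\epsilon>0$ there is $N=N(\epsilon,b)$ such that if $n\ge N$ and $v,w$ belong to a common $A\in A^n$ with $v\in\Omega$, then $d_1(w,\Omega)<\epsilon$, i.e. $w\in B(\Omega,\epsilon)$.

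With this in hand I would define $C_n$ to be the union of all $A\in A^n$ that meet $\Omega$. Then $\Omega\subset C_n$, so $\nu(\Omega\setminus C_n)=0$; and by the previous paragraph $C_n\subset B(\Omega,\epsilon_n)$ for a sequence $\epsilon_n\to 0$ (taking $\epsilon_n$ so that $N(\epsilon_n,b)\le n$). Hence
\[
\nu(\Omega\bigtriangleup C_n)=\nu(C_n\setminus\Omega)\le \nu\bigl(B(\Omega,\epsilon_n)\setminus\Omega\bigr).
\]
Since $\Omega$ is a Borel set (indeed closed, as it contains the closed set $\mathrm{sing}$; if $\Omega$ is only assumed Borel one works with a closed set of nearly full $\nu$-measure inside $\Omega$ plus a closed set containing $\mathrm{sing}$, which suffices for the application) the sets $B(\Omega,\epsilon_n)\setminus\Omega$ decrease to a $\nu$-null set as $\epsilon_n\downarrow 0$, so by continuity of the measure from above $\nu(B(\Omega,\epsilon_n)\setminus\Omega)\to 0$. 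Therefore $\nu(\Omega\bigtriangleup C_n)\to 0$, as claimed.

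The main obstacle, and the only place where care beyond bookkeeping is needed, is the asymptotic-closeness step: converting "joined by a $\pi\phi^t$-short curve for $|t|\le n$" into "close to a biasymptotic, hence singular, geodesic." In nonpositive curvature this is immediate from convexity of $t\mapsto d(c_1(t),c_2(t))$; here it rests on Lemma \ref{Ka} together with the flat strip lemma and the closedness of $\mathrm{sing}$, all of which are available in the no-focal-point setting. The compactness/diagonal argument to produce the limiting biasymptotic pair is standard once the uniform $2b$-bound on $[-n,n]$ is in place, so I expect no further difficulty; the remaining measure-theoretic steps are routine continuity-of-measure arguments.
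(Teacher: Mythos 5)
Your identification of the geometric core is the right one and matches the paper's own justification for importing Knieper's Lemma 5.5: the only non-measure-theoretic input is that a vector joined to $v$ by curves whose $\pi\phi^t$-images stay of length $\le b$ for all $|t|\le n$, with $n\to\infty$, limits onto a geodesic biasymptotic to $\gamma_v$, and the flat strip lemma (valid without focal points) then forces higher rank. (Incidentally, you do not need Lemma \ref{Ka} here: for each fixed $t$ the curve $\pi\phi^t\alpha$ itself joins $\gamma_{\tilde v}(t)$ to $\gamma_{\tilde w}(t)$ and has length $\le b$, so $d(\gamma_{\tilde v}(t),\gamma_{\tilde w}(t))\le b$ directly.)

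However, your key claim and your final step both have genuine gaps, and they sit exactly where the difficulty of this lemma lies. First, in the compactness argument you must split into two cases: either $\gamma_{v_\infty}\ne\gamma_{w_\infty}$, in which case the flat strip lemma makes \emph{both} limits singular and you are done, or $\gamma_{v_\infty}=\gamma_{w_\infty}$, i.e. $w_\infty=\phi^s v_\infty$ for some $|s|\le b$. Your write-up ignores the second case (the phrase ``by the flat strip lemma $v_\infty$ is singular'' is only valid in the first case), and in the second case you need $v_\infty\in\Omega$ \emph{and} flow-invariance of $\Omega$ to conclude $w_\infty\in\Omega$ --- neither is hypothesized, and without flow-invariance the claim is simply false: a flow segment $\{\phi^s v:|s|\le b\}$ through a regular $v\in\Omega$ satisfies the covering condition for \emph{every} $n$, so it can lie in a single $A\in A^n$ for all $n$ while staying at definite $d_1$-distance from $\Omega$. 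Second, the assertion that $\Omega$ is ``closed, as it contains the closed set $\mathrm{sing}$'' is a non sequitur, and in the intended application $\Omega$ is the (flow-invariant, generally dense, non-closed) carrier of the part of $\nu$ singular with respect to $\mu$; for such $\Omega$ the sets $B(\Omega,\epsilon_n)\setminus\Omega$ decrease to $\overline{\Omega}\setminus\Omega$, whose $\nu$-measure need not vanish, so continuity from above does not give $\nu(C_n\setminus\Omega)\to 0$. Your parenthetical remedy (replace $\Omega$ by a compact subset of nearly full $\nu$-measure) is the right direction, but it is precisely the remaining work: one must take the compact set flow-saturated (or otherwise absorb the thickening $\bigcup_{|s|\le b}\phi^s K$ produced by the ``same geodesic'' case) and run a diagonal argument over the regularity parameter, and none of this is carried out. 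As written, the proof establishes the geometric half of the lemma but not the measure-theoretic half.
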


Furthermore, we can show that for any $\phi$-invariant measure $\nu$ which is singular with respect to $\mu$.
there is subset  $C_n \subset SM$ which is the union of some subsets of $A^{n}$
such that
$$\mu(C_{n})\rightarrow 0, ~~~~\nu(C_{n})\rightarrow 1, ~~~~n\rightarrow \infty.$$

Now we consider the entropy of $\nu$ mentioned above, i.e., $\nu$ is a $\phi$-invariant measure which is singular with respect to $\mu$.
Since the geodesic flows on the manifolds without focal points are entropy-expansive (cf.~\cite{LW}, and we will discuss the entropy-expansiveness in the next section),
and  $\mbox{diam}_{d_{2n}}(\mathcal{B}^n)\leq 2\epsilon=\frac{1}{3}\mbox{Inj}(M)$, by Theorem 3.5 of \cite{Bo3} and Corollary 3.4 of \cite{Kn1}, we know that $$2n\cdot h_{\nu}(\phi^{1})=h_{\nu}(\phi^{2n},\mathcal{B}^{n})\leq H_{\nu}(\mathcal{B}^{n})=-\sum_{B \in \mathcal{B}^{n}}\nu(B)\log \nu(B).$$

We divide $\mathcal{B}^n$ into two subsets: $$\mathcal{B}^n_+:=\{B\in\mathcal{B}^n \mid \phi^n B\cap C_n\neq\emptyset\},~~\mbox{and}~~ \mathcal{B}^n_{-}:=\mathcal{B}^n\setminus\mathcal{B}^n_{+}.$$  Let $a_{n}=\sum_{B\in \mathcal{B}^{n}_{+}}\nu(B)$. Obviously $1>a_n\geq\nu(C_n)\rightarrow 1$ as $n\rightarrow \infty$.  Then we have:
\begin{eqnarray*}
2n\cdot h_{\nu}(\phi^{1}) &\leq& -\sum_{B\in\mathcal{B}^n_{+}}\nu(B)\log\nu(B)-\sum_{B\in\mathcal{B}^n_{-}}\nu(B)\log\nu(B)\\
&\leq& \Big(\sum_{B\in\mathcal{B}^n_{+}}\nu(B)\Big)\log\sharp(\mathcal{B}^n_+)+
\Big(\sum_{B\in\mathcal{B}^n_{-}}\nu(B)\Big)\log\sharp(\mathcal{B}^n_{-})+\frac{2}{e}\\
&\leq& a_n\log\sharp(\mathcal{B}^n_+)+(1-a_n)\log\sharp(\mathcal{B}^n)+\frac{2}{e},
\end{eqnarray*}
where the second inequality comes from Lemma 5.7 in \cite{Kn1}. By the same calculation as the one in \cite{Kn1}, we have the following estimations:
\begin{eqnarray*}
\sharp(\mathcal{B}^n_{+})
& = & \sum_{\{L^{2n}_{i}\in L^{2n}\ \mid\ \phi^{n}L^{2n}_{i} \subset C_{n}\}}\sharp\{B\in \mathcal{B}^{n}\mid  \phi^{n}B \cap \phi^{n}L^{2n}_{i} \neq \emptyset\}\\
&\leq & a(\epsilon,R',2R)\cdot\sharp\{L^{2n}_i\mid \phi^n L^{2n}_i\subset C_n\},
\end{eqnarray*}
and
\begin{eqnarray*}
\sharp\{L^{2n}_i\mid \phi^n L^{2n}_i\subset C_n\}
& = & \sharp\{L^{2n}_i\mid  L^{2n}_i\subset \phi^{-n} C_n\}\\
&\leq & \alpha\frac{\mu(C_{n})}{\min \mu(L^{2n}_{i})}\leq \frac{\alpha}{\beta}\mu(C_{n})e^{2hn}.
\end{eqnarray*}
Thus
$$\sharp(\mathcal{B}^n_{+})\leq K\cdot\mu(C_n)e^{2hn}.$$
where $K=\frac{\alpha}{\beta}a(\epsilon,R',2R)$ is independent of $n$.

On the other hand,
\begin{eqnarray*}
\sharp\{\mathcal{B}^n\}
& = & \sharp\{B\in \mathcal{B}^n \mid \phi^{n}B\cap SM \neq \emptyset\}\\
& = & \sum_{i=1}^{\sharp L^{2n}} \sharp\{B\in \mathcal{B}^n \mid \phi^{n}B\cap \phi^{n}L^{2n}_{i} \neq \emptyset\}\\
& = & a(\epsilon,R',2R)\cdot \sharp L^{2n} \leq a(\epsilon,R',2R)\cdot \alpha \cdot \frac{\mu(SM)}{\min \mu (L^{2n}_{i})}\\
& \leq & a(\epsilon,R',2R)\cdot \frac{\alpha}{\beta}e^{2hn}=K\cdot e^{2hn}.
\end{eqnarray*}

Then we have
\begin{eqnarray*}
2n\cdot h_{\nu}(\phi^{1})
& \leq & a_n\log\sharp(\mathcal{B}^n_+)+(1-a_n)\log\sharp(\mathcal{B}^n)+\frac{2}{e}\\
& \leq &  a_n\log(K\cdot\mu(C_n)e^{2hn})+(1-a_n)\log(K\cdot e^{2hn})+\frac{2}{e},
\end{eqnarray*}
thus
$$2n(h_{\nu}(\phi^{1})-h)-\frac{2}{e}\leq a_{n}\log (K\cdot \mu(C_{n}))+(1-a_{n})\log K.$$

Since $1\geq a_{n}\geq \nu(C_{n})\rightarrow 1$ and $\mu(C_{n})\rightarrow 0$, we know the righthand side of the above inequality approaches to $-\infty$ as $n\rightarrow \infty$. Therefore we must have $$h_{\nu}(\phi^{1})< h= h_{\text{top}}(g).$$
We are done with the proof of the uniqueness of the maximal entropy measure for the geodesic flow on the manifold without focal points.

\section{\bf{Entropy gap and growth rate of singular and regular closed geodesics}}\label{closedgeodesic}

In this section, we are going to discuss the distribution of the primitive closed geodesics on compact rank $1$ manifolds without focal points. First, we will prove the entropy gap $h_{\text{top}}(\phi|_{\text{sing}})< h_{\text{top}}(g)$, which is the second part of Theorem A. Based on this result, We can show that the the ratio $\frac{P_{\text{sing}}(t)}{P_{\text{reg}}(t)}$ decays exponentially as $t\rightarrow +\infty$. This means that the set of the regular primitive  closed geodesics grows exponentially faster than the set of singular primitive closed geodesic. To show this, we need to consider the upper semi-continuity of $h_{\mu}(\phi):\mathcal{M}_{inv}(\phi)\rightarrow \mathbb{R}$ for the geodesic flow.

We use $\phi^{1}$ to denote by the time-1 map of the geodesic flow. For each $v \in SM$ and $\epsilon>0$, define
\begin{eqnarray*}
\Gamma_{\epsilon}(v)& := &\{w \in SM\mid d_{1}(\phi_{n}(v),\phi_{n}(w))\leq \epsilon, \forall~n \in \mathbb{Z}\}\\
& = & \{w \in SM\mid d(\gamma_{v}(t),\gamma_{w}(t))\leq
\epsilon, \forall~t\in \mathbb{R}\},
\end{eqnarray*}
where $d_{1}$ is the Knieper metric defined in Subsection \ref{compute entropy}.

\begin{Defi}[Entropy-expansiveness]
The geodesic flow $\phi^t: SM \rightarrow SM$ is called entropy-expansive if
$h^{\ast}_{\phi_{1}}(\epsilon):=\sup_{v \in SM}h(\phi_1,\Gamma_{\epsilon}(v))=0$.
\end{Defi}

In \cite{Bo3}, Bowen showed that if $T:X\rightarrow X$ is entropy-expansive, then the map $\mu\mapsto h_{\mu}(T)$ is upper semi-continuous. It was showed in \cite{LW} that if the compact manifold $(M,g)$ is bounded asymptote and have no conjugate points, then the geodesic flow on $SM$ in entropy-expansive. Since a manifold without focal points is always bounded asymptotic (cf.~\cite{RM}), the geodesic flow on it is entropy-expansive. Combining the two results in the above, we have the conclusion that the map $\mu\mapsto h_{\mu}(\phi)$ is upper semi-continuous. This is the following proposition.

\begin{proposition}\label{u-s-conti}
The map $\mu\mapsto h_{\mu}(\phi)$ is upper semi-continuous, for the geodesic flow $\phi^t:SM\rightarrow SM$ on a smooth connected compact Riemmannian manifold $(M,g)$ without focal points.
\end{proposition}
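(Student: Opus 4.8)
The plan is to reduce the assertion to the \emph{entropy-expansiveness} of the geodesic flow and then invoke Bowen's upper semi-continuity theorem. Recall first that $h_{\mu}(\phi)$ is by definition the entropy $h_{\mu}(\phi^{1})$ of the time-one map, so it suffices to show that $\mu\mapsto h_{\mu}(\phi^{1})$ is upper semi-continuous on the space of $\phi^{1}$-invariant Borel probability measures on $SM$ with the weak$^{\ast}$ topology. By a theorem of Bowen (cf.~Theorem 3.5 of \cite{Bo3}), for any homeomorphism $T$ of a compact metric space that is entropy-expansive (equivalently $h$-expansive), the map $\nu\mapsto h_{\nu}(T)$ is upper semi-continuous. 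Applying this with $T=\phi^{1}$, we are reduced to verifying that the geodesic flow is entropy-expansive, i.e.~$h^{\ast}_{\phi_{1}}(\epsilon)=\sup_{v\in SM}h(\phi_{1},\Gamma_{\epsilon}(v))=0$ for some (hence all sufficiently small) $\epsilon>0$.

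The second step is to check this entropy-expansiveness in the no focal points setting. Here I would quote the result of \cite{LW}, which shows that the geodesic flow on a compact manifold without conjugate points that is \emph{bounded asymptote} is entropy-expansive. It then remains only to observe two things: a compact manifold without focal points has no conjugate points (immediate from the definitions, as already noted in Section~\ref{intro}), and a manifold without focal points is bounded asymptote, which is precisely the content of \cite{RM} (on such manifolds the growth of the distance between asymptotic geodesics is uniformly controlled). Chaining these facts yields that $\phi^{t}:SM\to SM$ is entropy-expansive.

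Combining the two steps gives the proposition: the geodesic flow on $SM$ is entropy-expansive, so by Bowen's theorem $\nu\mapsto h_{\nu}(\phi^{1})=h_{\nu}(\phi)$ is upper semi-continuous on the set of invariant probability measures. The only real obstacle is bookkeeping, namely confirming that the notion of ``bounded asymptote'' used in \cite{RM} is exactly the hypothesis required in \cite{LW}, and that Bowen's statement is being invoked in its flow (time-one map) formulation; since $SM$ is compact and the geodesic flow continuous, no further compactness or measurability issues arise.
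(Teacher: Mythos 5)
Your proposal is correct and follows exactly the same route as the paper: Bowen's theorem that entropy-expansiveness implies upper semi-continuity of $\nu\mapsto h_{\nu}(\phi^{1})$, combined with the result of \cite{LW} that geodesic flows on compact bounded-asymptote manifolds without conjugate points are entropy-expansive, and the fact from \cite{RM} that no focal points implies bounded asymptote. No differences worth noting.
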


Based on the discussion in the above, we can prove the second part of Theorem A.

\begin{theorem}[Entropy gap]\label{th4}
Let $(M,g)$ be a compact rank $1$ Riemannian manifold without focal points. Then $$h_{\text{top}}(\phi|_{\text{sing}})< h_{\text{top}}(g),$$
where $h_{\text{top}}(g)$ denotes the topological entropy of geodesic flow on $SM$,
and $h_{\text{top}}(\phi|_{\text{sing}})$ is the topological entropy of geodesic flow restricted to the singular set \text{sing}.
\end{theorem}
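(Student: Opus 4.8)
The plan is to compare the topological entropy of the geodesic flow restricted to the singular set with the measure-theoretic entropy of the Knieper measure $\mu$ constructed in Section~\ref{maximal}. First I would invoke the variational principle for the restricted flow: $h_{\text{top}}(\phi|_{\text{sing}}) = \sup\{h_{\nu}(\phi) : \nu \text{ is } \phi\text{-invariant and } \nu(\text{sing})=1\}$. Since the singular set $\text{sing}$ is a closed (hence compact) $\phi$-invariant subset of $SM$, this supremum is meaningful, and by Proposition~\ref{u-s-conti} the map $\nu \mapsto h_{\nu}(\phi)$ is upper semi-continuous on the compact (in the weak$^*$ topology) set $\mathcal{M}_{\text{inv}}(\phi|_{\text{sing}})$ of invariant measures supported on $\text{sing}$. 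Therefore the supremum is attained: there exists a $\phi$-invariant probability measure $\nu_0$ with $\nu_0(\text{sing})=1$ and $h_{\nu_0}(\phi) = h_{\text{top}}(\phi|_{\text{sing}})$.

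Next I would observe that $\nu_0$ is singular with respect to $\mu$. Indeed, in Subsection~\ref{proof of ergodicity} we established (via Theorem~\ref{ergodic} and the ergodicity of $\mu$) that $\mu(\text{sing})=0$, since the regular set has positive $\mu$-measure and $\mu$ is ergodic; because $\nu_0$ is concentrated on $\text{sing}$, the two measures are mutually singular. Now the uniqueness argument in Subsection~\ref{proof of uniqueness} applies verbatim: we proved there that for \emph{any} $\phi$-invariant probability measure $\nu$ which is singular with respect to $\mu$ one has the strict inequality $h_{\nu}(\phi) < h = h_{\text{top}}(g)$. Applying this with $\nu = \nu_0$ yields $h_{\text{top}}(\phi|_{\text{sing}}) = h_{\nu_0}(\phi) < h_{\text{top}}(g)$, which is exactly the claimed entropy gap.

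The main obstacle is not a new computation but making sure the uniqueness estimate of Subsection~\ref{proof of uniqueness} genuinely covers the measure $\nu_0$; concretely, that argument is phrased for $\phi$-invariant probability measures on $SM$ singular with respect to $\mu$, and $\nu_0$ is such a measure, so the covering sets $C_n$ (with $\mu(C_n)\to 0$, $\nu_0(C_n)\to 1$) and the entropy-expansiveness input (Proposition~\ref{u-s-conti} and the entropy-expansiveness of the geodesic flow in no focal points case) apply without change. The only point requiring a word of care is the existence of the maximizing measure $\nu_0$: this needs the compactness of $\text{sing}$ together with the \emph{global} upper semi-continuity of $\nu\mapsto h_\nu(\phi)$ on $SM$ from Proposition~\ref{u-s-conti}, restricted to the weak$^*$-closed subset of measures supported on $\text{sing}$. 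With these two ingredients in hand the proof is short, and I would present it in essentially the three steps above: attain the maximum on $\text{sing}$ by upper semi-continuity, note mutual singularity with $\mu$, and quote the strict entropy bound from Subsection~\ref{proof of uniqueness}.
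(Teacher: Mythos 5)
Your proposal is correct and follows essentially the same route as the paper: both arguments use the upper semi-continuity of $\nu\mapsto h_{\nu}(\phi)$ (from entropy-expansiveness) together with compactness of the invariant measures on the closed set $\text{sing}$ to produce a measure of maximal entropy for $\phi|_{\text{sing}}$, and then invoke the strict entropy deficiency of measures singular to the Knieper measure $\mu$. The only cosmetic difference is that the paper phrases this as a contradiction with the uniqueness of the measure of maximal entropy, whereas you apply the strict inequality from Subsection \ref{proof of uniqueness} directly.
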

\begin{proof}
We prove this theorem by contradiction. Assume that $h_{\text{top}}(\phi|_{\text{sing}})= h_{\text{top}}(g)$. Then, since the singular set sing is a closed $\phi$-invariant subset of $SM$, we can find a sequence of invariant probability  measures $\omega_{k}$ supported on sing with
$$h_{\omega_{k}}(\phi)\geq h_{\text{top}}(\phi|_{\text{sing}})-\frac{1}{k}.$$
By the compactness of $\mathcal{M}_{inv}$, the set $\{\omega_{k}\}_{k\in\mathbb{Z}^+}$ has an accumulate point $\omega$ (under the weak$^{\ast}$ topology). Obviously $$\text{supp}(\omega)\subset\text{sing}.$$ Thus $\omega \neq \mu$, where $\mu$ is the Knieper measure, since the Knieper measure is supported on the regular set.
By the upper semi-continuity of the map $\mu\mapsto h_{\mu}(\phi)$,
$$h_{\omega}(\phi)=h_{\text{top}}(\phi|_{\text{sing}})=h_{\text{top}}(g).$$
This contradicts to the uniqueness of the invariant measure of maximal entropy.
We have proved that $h_{\text{top}}(\phi|_{\text{sing}})< h_{\text{top}}(g)$.
\end{proof}

Finally, we are ready to proof Theorem D which is restated below.

\begin{TheoremD}
Let $(M,g)$ be a compact rank $1$ Riemannian manifold without focal points. Then there exist $a>0$ and $t_1>0$ such that for all $t>t_1$,
$$\frac{e^{ht}}{at}\leq P_{\text{reg}}(t)\leq a e^{ht}.$$
Moreover, there exist positive constants $\epsilon$ and $t_{2}$ such that
$$\frac{P_{\text{sing}}(t)}{P_{\text{reg}}(t)}\leq e^{-\epsilon t}, ~~t> t_{2}.$$
\end{TheoremD}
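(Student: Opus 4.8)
The plan is to deduce both statements from the combination of Theorem~C (the sharp two-sided bound on $\mathrm{Vol}(S(x,r))\asymp e^{hr}$) and the entropy gap $h_{\text{top}}(\phi|_{\text{sing}}) < h_{\text{top}}(g) = h$ established in Theorem~\ref{th4}, following Knieper's argument in \cite{Kn1,Kn2}. The basic dictionary is the standard one: a primitive closed geodesic in a given free homotopy class corresponds (by the flat strip theorem, valid here since $M$ has no focal points) to a conjugacy class of a primitive axial element $\alpha \in \Gamma$, whose translation length $\ell(\alpha) = \min_{x\in X} d(x,\alpha x)$ equals the common least period of the closed geodesics in that class. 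So counting $\mathcal{P}(t)$ amounts to counting conjugacy classes of primitive elements of $\Gamma$ with translation length $\le t$.

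First I would prove the upper bound $P_{\text{reg}}(t) \le a e^{ht}$. Fix a compact fundamental domain $\mathcal{F}$ with reference point $p$ and diameter $D$. Each closed geodesic of length $\le t$ in $\mathcal{P}(t)$ can be represented by an axial $\alpha$ whose axis passes within distance $D$ of $p$, hence $d(p,\alpha p) \le t + 2D$; thus the number of such classes is at most $\sharp\{\alpha\in\Gamma : d(p,\alpha p)\le t+2D\}$. By packing the orbit $\Gamma p$ against volume and using the upper volume growth $\mathrm{Vol}(B(p,r)) \le C e^{hr}$ (a consequence of Theorem~C, or of Manning--Freire--Ma\~n\'e), this count is $\le a e^{ht}$. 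For the lower bound $P_{\text{reg}}(t) \ge e^{ht}/(at)$, I would run Knieper's argument: using Proposition~\ref{pro8} one finds rank~$1$ axes densely, and using the separation of orbit points on spheres together with the $\mu$-measure estimates of Proposition~\ref{pro13} one produces $\gtrsim e^{ht}/t$ geometrically distinct primitive rank~$1$ closed geodesics of length in a window $[t-c,t]$; since rank~$1$ geodesics are isolated in their free homotopy classes, distinct axes give distinct elements of $\mathcal{P}_{\text{reg}}(t)$. Summing over the windows gives the claimed lower bound. (The $1/t$ loss is the usual one coming from the absence of a prime-orbit-type asymptotic.)

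For the second statement I would bound $P_{\text{sing}}(t)$ directly via the entropy gap. A singular closed geodesic of length $\le t$ lies entirely in the closed invariant set $\mathrm{sing}$, on which the geodesic flow has topological entropy $h' := h_{\text{top}}(\phi|_{\text{sing}}) < h$. By the standard relation between topological entropy and the exponential growth rate of closed orbits for a flow (here one only needs the \emph{upper} bound: for any $\epsilon>0$, the number of closed orbits of period $\le t$ is $\le e^{(h'+\epsilon)t}$ for large $t$, which follows from covering $\mathrm{sing}$ by $(d_t,\delta)$-spanning sets of cardinality $\le e^{(h'+\epsilon)t}$ and noting that a $\delta$-separated set of periodic points is also such a spanning set), we get $P_{\text{sing}}(t) \le e^{(h'+\epsilon)t}$ for $t$ large, choosing $\epsilon$ with $h'+\epsilon < h$. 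Combining with $P_{\text{reg}}(t) \ge e^{ht}/(at)$ gives
$$
\frac{P_{\text{sing}}(t)}{P_{\text{reg}}(t)} \le a t \, e^{(h'+\epsilon - h)t} \le e^{-\epsilon' t}
$$
for a suitable $\epsilon' > 0$ and all $t > t_2$, absorbing the polynomial factor $at$ into the exponential. This proves the ratio estimate.

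The main obstacle I anticipate is the \emph{lower} bound $P_{\text{reg}}(t) \ge e^{ht}/(at)$: the upper bounds and the singular estimate are soft consequences of volume growth and the entropy gap, but the lower bound requires actually manufacturing enough geometrically distinct primitive rank~$1$ closed geodesics with controlled lengths, and checking they represent distinct free homotopy classes. This is exactly where one must invoke the full force of the Patterson--Sullivan machinery: the minimality of the $\Gamma$-action on $X(\infty)$ (Proposition~\ref{pro8}), the existence of rank~$1$ connecting geodesics between neighborhoods of endpoints (Propositions~\ref{pro5}--\ref{pro6}), and the two-sided shadow estimates of Proposition~\ref{pro13}, together with Theorem~C, to pass from the geometric count on geodesic spheres to a count of closed geodesics. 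One also needs care with the primitivity and with multiplicities (an element and its inverse, iterates), but these contribute only bounded or logarithmic corrections and are handled as in \cite{Kn1}.
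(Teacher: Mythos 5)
Your proposal follows essentially the same route as the paper: the upper bound on $P_{\text{reg}}(t)$ from Theorem C, the lower bound by adapting Knieper's construction of nearby rank $1$ axes (the paper makes this precise via no-focal-points versions of Lemmas 5.6 and 5.7 of \cite{Kn0}, built on Propositions \ref{pro6}, \ref{pro8} and \ref{pro13}, which are exactly the ingredients you name), and the ratio estimate by observing that primitive singular closed geodesics of period at most $t$ form a $(d_{n(t)},\delta)$-separated set in $\text{sing}$ with $\delta=\text{Inj}(M)$, so their number grows at rate at most $h_{\text{top}}(\phi|_{\text{sing}})<h$. The only cosmetic slip is your justification of the singular count via spanning sets; the clean statement is the separated-set bound just described, which is what the paper uses.
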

\begin{proof}
It was shown in \cite{Kn0} that, for the geodesic flow defined on a compact rank $1$ manifold with nonpositive curvature, there is a $t_1>0$ such that for all $t>t_1$,
\begin{equation}\label{Preg}
\frac{e^{ht}}{at}\leq P_{\text{reg}}(t)\leq a e^{ht},
\end{equation}
for some $a>0$. One can check that this proof is also valid for manifolds without focal points. In fact, the upper bound for $P_{\text{reg}}(t)$ is a direct consequence of Theorem C. The lower bound was proved for rank $1$ manifolds with nonpositive curvatures by Knieper in \cite{Kn0}, Theorem 5.8. This theorem relies on Lemma 5.6 and 5.7 in \cite{Kn0}, which was originally proved by Knieper in \cite{KnAM}. We will exhibits the no focal points version of these two lemmas. And then the lower bound follows. Here we define the function $l(\alpha):=\min\{d(p,\alpha(p)) \mid p\in X\}$, for any $\alpha\in\Gamma$.

\begin{lemma}\label{lem56}
Let $X$ be a simply connected manifold without focal points, $M=X/\Gamma$, and $c$ be a rank $1$ axis of some $\alpha \in \Gamma$ and $x_0=c(0)$.
Suppose $l(\alpha)=t>0$. Then for any $\varepsilon>0$
there exist a neighborhood $U\subset X(\infty)$ of $c(-\infty)$, a neighborhood $V\subset X(\infty)$ of $c(+\infty)$, and constants $\rho>0$,
$n\in\mathbb{Z}^+$, such that for all $\gamma\in \Gamma$ with $d(x_0,\gamma(x_0))\leq t$ and $\gamma(U)\cap V=\emptyset$,
the axis $a$ of $\alpha^n\gamma^{-1}\alpha^n$ is rank $1$ with end points in $U$ and $V$.
Moreover $l(\alpha^n\gamma^{-1}\alpha^n)\leq t+\rho$ and $a'(0)\in B^{d_{1}}_{\varepsilon}(c'(0))$. i.e., $\max_{t\in [0,1]}d(a(t),c(t))\leq \epsilon$.
\end{lemma}
This lemma corresponds to Lemma 5.6 in \cite{Kn0}.
\begin{proof}
This lemma is a consequence of our results in Section \ref{geometric}.
By Proposition \ref{pro6}, for a fixed rank $1$ axis $c$, consider any point $c(s)$ in $c$, we know that for any $\epsilon >0$,
there exist neighborhoods $U_{\epsilon}(c(s))$ of $c(-\infty)$ and $V_{\epsilon}(c(s))$ of $c(+\infty)$
such that for each pair $(x',y')\in U_{\epsilon}(c(s))\times V_{\varepsilon}(c(s))$,
the connecting geodesic $\gamma_{x',y'}$ exists and is rank $1$ (thus unique!), and $d(c(s),\gamma_{x',y'})<\frac{\epsilon}{4}$.
Now we take $U:=U_{\epsilon}(c(0))\cap U_{\epsilon}(c(1))$ and $V:=V_{\varepsilon}(c(0))\cap V_{\varepsilon}(c(1))$,
by Proposition \ref{pro6}, we know for any $x'\in U$ and $y'\in V$, the unique rank $1$ connecting geodesic $\gamma_{x',y'}:=a$ satisfies
$$d_{1}(a'(0),c'(0))\leq\varepsilon.$$
Thus $a'(0)\in B_{\varepsilon}^{d_{1}}(c'(0))$,
where $B_{\varepsilon}^{d_{1}}(c'(0))$ denotes the $\varepsilon$-neighborhood of $c'(0)$ in $SX$ in the sense of Knieper metric $d_{1}$.

Lemma \ref{pro7} implies that there exists a constant $n\in\mathbb{Z}^+$ such that
$$\alpha^n(X(\infty)-U)\subset V,~~\alpha^{-n}(X(\infty)-V)\subset U.$$
Note that $\overline{V}\subset X(\infty)-U$, $\overline{U}\subset X(\infty)-V$, and $\gamma(U)\cap V=\emptyset$. We know that
$$\alpha^n\gamma^{-1}\alpha^n(\overline{V})\subset\alpha^n\gamma^{-1}\alpha^n(X(\infty)-U)\subset\alpha^n\gamma^{-1}(V)\subset\alpha^n(X(\infty)-U)\subset V.$$
Similarly, we have
$$\alpha^{-n}\gamma\alpha^{-n}(\overline{U})\subset U.$$
This implies that $\alpha^n\gamma^{-1}\alpha^n$ has end points in $U$ and $V$.
Moreover,
\begin{eqnarray*}
d(x_0,\alpha^n\gamma^{-1}\alpha^n(x_0)&\leq & d(\alpha^{-n}(x_0),x_0)+d(x_0,\gamma^{-1}\alpha^n(x_0))\\
&=& d(\alpha^{-n}(x_0),x_0)+d(\gamma(x_0),\alpha^n(x_0))\\
&=& d(\alpha^{-n}(x_0),x_0)+d(\gamma(x_0),x_0)+d(x_0,\alpha^n(x_0))\\
&\leq & 2nl(\alpha)+t.
\end{eqnarray*}
Let $\rho=2nl(\alpha)+t$, then we have
$$d(x_0,\alpha^n\gamma^{-1}\alpha^n(x_0))\leq\rho+t,$$ which implies $l(\alpha^n\gamma^{-1}\alpha^n)\leq\rho+t$. We are done with the proof of the lemma.
\end{proof}

The next lemma is the no focal points version of Lemma 5.7 in \cite{Kn0}. Here we define $\Gamma_t^s(p):=\{\gamma\in \Gamma \mid s\leq d(p,\gamma(p))\leq t\}$ and $\Gamma_t(p):=\Gamma_t^0(p)$ for any $p\in X$.

\begin{lemma}\label{lem57}
Let $X$ be a simply connected manifold without focal points, and $c$ be a rank $1$ axis of some $\eta\in \Gamma$ and $x_0=c(0)$. Suppose $t=l(\eta)$. Then there exist constants $r,s>0$ such that $$\sharp\{\gamma\in\Gamma_{t+r}(x_0) \mid \gamma(U)\cap V=\emptyset\}\geq\frac{1}{4}\sharp\Gamma_t^s(x_0).$$
\end{lemma}

Similar as Lemma \ref{lem56}, this lemma is also a consequence of our results in Section \ref{geometric} (Proposition \ref{pro6} and \ref{pro8}). Because of the limitation of the length of this article, we omit the proof at here. Readers can give the proof by follow Knieper method of in \cite{KnAM} without any difficulty.

Choose a rank $1$ axis $c$ of some $\eta\in \Gamma$. Let $x_0=c(0)$ and $t_0=l(\eta)$. Then due to Lemma \ref{lem56} and \ref{lem57}, for any $\varepsilon>0$, we have $$\sharp A_{\varepsilon}(t_0+\rho+r)\geq\frac{1}{4}\sharp \Gamma_t^s(x_0),$$ where $A_{\varepsilon}(t):=\{\gamma\in\Gamma \mid l(\gamma)\leq t,~a'(0)\in B_{\varepsilon}^{d_{1}}(c'(0))~\mbox{where~a~is~rank~1~axis~of}~\gamma\}$. Together with Corollary 5.5 in \cite{Kn0}, we can get the lower bound in (\ref{Preg}). We are done with the proof of the first part of Theorem D.

To prove the second part, we will follow the idea in the proof of Corollary 6.2 in \cite{Kn1}. Suppose $h_{\text{top}}(g)=h$. Let $\epsilon_0=\frac{h-h_{\text{top}}(\phi|_{\text{sing}})}{2}$. First we consider the numerator $P_{\text{sing}}(t)= \sharp \mathcal{P}_{\text{sing}}(t)$. For each $t>0$, let $n(t)=[t]+1$ where $[t]$ is the largest integer less or qual to $t$. Let $\delta=\text{Inj}(M)$ which is the injective radius of $M$. Then it is easy to see that all the primitive closed geodesics in $\mathcal{P}_{\text{sing}}(t)$ form a $(t(n),\delta)$-separated set in sing. This leads to the inequality:
\begin{equation}\label{Psing}
\limsup_{t\rightarrow+\infty} \frac{1}{t}\log P_{\text{sing}}(t) \leq h_{\text{top}}(\phi|_{\text{sing}})< h_{\text{top}}(g)-\epsilon_0=h-\epsilon_0.
\end{equation}
Now consider denominator $P_{\text{reg}}(t)= \sharp \mathcal{P}_{\text{reg}}(t)$. We already explained that the formula (\ref{Preg}) holds for geodesic flows on compact manifolds without focal points. Combining the two inequalities \eqref{Psing} and \eqref{Preg} in the above, take $\epsilon<\epsilon_0$,
then there is a $t_2 >0$ such that for all $t>t_2$, $$\frac{P_{\text{sing}}(t)}{P_{\text{reg}}(t)}\leq e^{-\epsilon t}.$$

We are done with the proof.
\end{proof}

\section*{\textbf{Acknowledgements}}
F.~Liu is partially supported by NSFC under Grant Nos.11301305 and 11571207. F.~Wang is partially supported by NSFC under Grant No.11571387 and by the State Scholarship Fund from China Scholarship Council (CSC).
W.~Wu is partially supported by NSFC under Grant Nos.11701559 and 11571387.
All of us would like to express our great gratitude to the anonymous referee for her/his tremendous suggestions and comments on the first
version of the article.


\begin{thebibliography}{99}
\bibitem{Ba2}
W. Ballmann, {\it Axial isometries of manifolds of nonpositive curvature},
Math. Ann., {\bf 259} (1982): 131--144.

\bibitem{Ba}
W. Ballmann, {\it Nonpositively curved manifolds of higher rank}, Ann. of Math. (2) {\bf 122.3} (1985): 597--609.

\bibitem{Ba1} W. Ballmann
{\it Lectures on spaces of nonpositive curvature. With an appendix by Misha Brin},
DMV Seminar, 25. Birkhauser Verlag, Basel, 1995.

\bibitem{BBE} W. Ballmann, M. Brin and P. Eberlein,
{\it Structure of manifolds of nonpositive curvature I},
Ann. of Math. (2) {\bf 122.1} (1985), 171--203.


\bibitem{BBS}
W. Ballmann, M. Brin and R. Spatzier, {\it Structure of manifolds of nonpositive curvature. II}, Ann. of Math.(2) {\bf 122.2} (1985): 205--235.


\bibitem{Bo1} R. Bowen,
{\it Periodic orbits for hyperbolic flows}, American J. Math., {\bf 94} (1972), 1--30.

\bibitem{Bo3} R. Bowen,
{\it Entropy-expansive maps}, Trans. Amer. Math. Soc. {\bf 164} (1972), 323--331.

\bibitem{Bo2} R. Bowen,
{\it Maximizing entropy for a hyperbolic flow}, Mathematical Systems Theory, {\bf 7} (1973), 300--303.

\bibitem{BCTT} K. Burns, V. Climenhaga, T. Fisher and D.J. Thompson,
{\it Unique equilibrium states for geodesic flows in nonpositive curvature}, arXiv preprint, arXiv:1703.10878, (2017).

\bibitem{BuKa} K. Burns and A. Katok,
{\it Manifolds with non-positive curvature},  Ergodic Theory Dynam. Systems,
{\bf 5} (1985), 307--317.

\bibitem{BS2}
K. Burns and R. Spatzier, {\it Manifolds of nonpositive curvature and their buildings},
Publications Math\'{e}matiques de l'Institut des Hautes \'{E}tudes Scientifiques, {\bf 65} (1987): 35--59.

\bibitem{CS}
C.B. Croke and V. Schroeder, {\it The fundamental group of compact manifolds without conjugate points},
Comment. Math. Helvetici, {\bf 61} (1986): 161--175.

\bibitem{DPS}
F. Dal¡¯Bo, M. Peign\'{e} and A. Sambusetti, {\it On the horoboundary and the geometry of rays of negatively curved manifolds},
Pacific J. Math., {\bf 259} (2012): 55--100.

\bibitem{Eb1} P. Eberlein,
{\it Geometry of nonpositively curved manifolds},
Chicago Lectures in Mathematics. University of Chicago Press, Chicago, IL, 1996.

\bibitem{EbON} P. Eberlein and B. O'Neill,
{\it Visibility manifolds}, Pacific J. Math., {\bf 46} (1973), 45--109.

\bibitem{FrMa} A. Freire and R. Ma\~{n}\'{e},
{\it On the entropy of the geodesic flow in manifolds without conjugate points},
Invent. Math., {\bf 69} (1982), 375--392.

\bibitem{GR} K. Gelfert and R. Riggiero,
{\it On geodesic flows modeled by expansive flows up to time-preserving semi-conjugacy}, arXiv preprint arXiv: 1703.07455, 2017.

\bibitem{Gu} R. Gulliver,
{\it On the variety of manifolds without conjugate points}, Trans. Amer. Math. Soc., {\bf 210} (1975), 185--201.

\bibitem{Ka} A. Katok,
{\it Entropy and closed geodesics}, Ergodic Theory Dynam. Systems,
{\bf 2} (1982), 339--365.

\bibitem{KH} A. Katok and B. Hasselblatt,
{\it Introduction to the modern theory of dynamical systems},
Encyclopedia of Mathematics and its Applications, 54. Cambridge University Press, Cambridge, 1995.

\bibitem{KnAM} G. Knieper,
{\it Das Wachstum der\"{a}quivalenzklassen geschlossener Geod\"{a}tischer in kompakten Man-nigfaltigkeiten}, Arch. Math. (Basel) {\bf 40} (1983), 559--568.

\bibitem{Kn0} G. Knieper,
{\it On the asymptotic geometry of nonpositively curved manifolds},
Geom. Funct. Anal., {\bf 7} (1997), 755--782.

\bibitem{Kn1} G. Knieper,
{\it The uniqueness of the measure of maximal entropy for geodesic flows on rank 1 manifolds},
 Ann. of Math. (2), {\bf 148} (1998), 291--314.

\bibitem{Kn2} G. Knieper,
{\it Closed geodesics and the uniqueness of the maximal measure for rank $1$ geodesic flows},
Smooth ergodic theory and its applications (Seattle, WA, 1999), 573--590,
Proc. Sympos. Pure Math., {\bf 69}, Amer. Math. Soc., Providence, RI, 2001.

\bibitem{Kn3} G. Knieper,
{\it Hyperbolic dynamics and Riemannian geometry},
 Handbook of dynamical systems, Vol. 1A, 453--545, North-Holland, Amsterdam, 2002.

\bibitem{LW} F. Liu and F. Wang,
{\it Entropy-expansiveness of geodesic flows on closed manifolds without conjugate points},
Acta Math. Sinica, {\bf 32} (2016), 507--520.

\bibitem{LZ} F. Liu and X. Zhu,
{\it The transitivity of geodesic flows on rank 1 manifolds without focal points}, 2017, to appear in {\em Differential Geom. Appl.}.

\bibitem{Man}
A. Manning, Topological entropy for geodesic flows, {\em Ann. of Math.}(2), {\bf 110} (1979), 567--573.

\bibitem{Ma0} G. A. Margulis,
{\it Applications of ergodic theory to the investigation of manifolds of negative curvature},
Functional Anal. Appl., {\bf 3} (1969), 335--336.

\bibitem{Ma1} G. A. Margulis,
{\it Certain measures associated with U-flows on compact manifolds},
Functional Anal. Appl., {\bf 4} (1970), 55--67.

\bibitem{Ma2} G. A. Margulis,
{\it On Some Aspects of the Theory of Anosov Systems.
With a survey by Richard Sharp: Periodic orbits of hyperbolic flows},
Springer-Verlag, Berlin, 2004.

\bibitem{Os}  J. O'Sullivan,
{\it Riemannian manifolds without focal points},  J. Differential Geometry,
{\bf 11} (1976), 321--333.


\bibitem{Pa} G. P. Paternain,
{\it Geodesic Flows}. Progress in Mathematics, \textbf{180}. Birkh\"{a}user Boston, Inc., Boston, MA, 1999.

\bibitem{Pat} S. Patterson,
{\it The limit set of a Fuchsian group}, Acta Math., {\bf 136} (1976), 241--273.

\bibitem{Ru1} R. Ruggiero,
{\it Expansive geodesic flows in manifolds with no conjugate points}, Ergodic Theory Dynam. Systems, {\bf 17} (1997), 211--225.

\bibitem{Ru2} R. Ruggiero,
{\it Dynamics and global geometry of manifolds without conjugate points}, Ensaios Matematicos, {\bf 12} (2007), 1--181.

\bibitem{RM} R. Ruggiero and V. Rosas Meneses,
{\it On the Pesin set of expansive geodesic flows in manifolds with no conjugate points}, Bull. Braz. Math. Soc., New Series, {\bf 34} (2003), 263--274.

\bibitem{Su} D. Sullivan,
{\it The density at infinity of a discrete group of hyperbolic motions},
Publ. Math. IHES., {\bf 50} (1979), 171--202.

\bibitem{Wat} J. Watkins,
{\it The higher rank rigidity theorem for manifolds with no focal points},
Geom. Dedicata {\bf 164} (2013), 319--349.

\bibitem{Wu1}
W.~Wu, \emph{On the ergodicity of geodesic flows on surfaces of nonpositive curvature}, Annales de la Facult\'{e} des Sciences de Toulouse. Math\'{e}matiques. S\'{e}rie 6, {\bf 24.3} (2015), 625--639.

\bibitem{Wu2}
W.~Wu, \emph{Higher rank rigidity for Berwald spaces}, arXiv preprint arXiv:1510.04476, 2015.

\bibitem{WLW}
W.~Wu, F. Liu, and F. Wang, \emph{On the ergodicity of geodesic flows on surfaces without focal points}, preprint, 2016, submitted.

\end{thebibliography}
\end{document}